\def\env@sqcases{%
  \let\@ifnextchar\new@ifnextchar
  \left\lbrack
  \def\arraystretch{1.2}%
  \array{@{}l@{\quad}l@{}}%
}
\newtheorem{theorem}{Theorem}
\newtheorem{lemma}{Lemma}
\newtheorem{proposition}{Proposition}
\theoremstyle{definition}
\newtheorem{definition}{Definition}
\newtheorem{remark}{Remark}
\begin{document}

\title{Propagating terrace in a two-tubes model\\
of gravitational fingering\footnote{YP and ST have equal contribution}
}

\author{Yu. Petrova\footnote{Pontificia Universidade Catolica do Rio de Janeiro (PUC-Rio).  R. Marques de Sao Vicente, 124 - Gavea, Rio de Janeiro - RJ, 22451-040, Brazil. 
E-mail: yu.pe.petrova@gmail.com.},
S. Tikhomirov\footnote{Pontificia Universidade Catolica do Rio de Janeiro (PUC-Rio) R. Marques de Sao Vicente, 124 - Gavea, Rio de Janeiro - RJ, 22451-040, Brazil; University of Duisburg-Essen, Thea-Leymann-Strasse 9, 45127 Essen, Germany; Instituto de Matematica Pura e Aplicada, Estrada Dona Castorina, 110, Jardim Botanico, Rio de Janeiro, RJ - Brazil, CEP: 22460-320.  E-mail: sergey.tikhomirov@gmail.com.},
Ya. Efendiev\footnote{Department of Mathematics \& ISC 
Texas A\&M University. 612 Blocker Building 3404 TAMU, College Station, TX 77843-3404. E-mail:  efendiev@math.tamu.edu.}}
\renewcommand{\today}{}
\maketitle
\abstract{
We study a semi-discrete model for the two-dimensional incompressible porous medium (IPM) equation describing gravitational fingering phenomenon. The model consists of a system of advection-reaction-diffusion equations on concentration, velocity and pressure, describing motion of miscible liquids under the Darcy's law in two vertical tubes (real lines) and interflow between them. Our analysis reveals the structure of gravitational fingers in this simple setting -- the mixing zone consists of space-time regions of constant intermediate concentrations and the profile of propagation is characterized by two consecutive traveling waves which we call a terrace. We prove the existence of such a propagating terrace for the parameters corresponding to small distances between the tubes. This solution shows the possible mechanism of slowing down the fingers' growth due to convection in the transversal direction. The main tool in the proof is a reduction to pressure-free transverse flow equilibrium (TFE) model using geometrical singular perturbation theory and the persistence of stable and unstable manifolds under small perturbations.
\medskip

\noindent {\bf Keywords. } 
propagating terrace, traveling waves, 
porous media, Darcy's law, gravitational fingering, geometric singular perturbation theory, invariant manifolds.

\smallskip
\noindent 
{\bf 2000 Mathematics Subject Classification. } 
76S05, 35C07, 
34D15, 37D10.
}

\tableofcontents

\section{Introduction}
\label{sec:intro}
\subsection{General context}

In this paper, we consider the semi-discrete model of the 2D viscous incompressible porous media (IPM) equation. The  IPM equation describes evolution of concentration carried by the flow of incompressible fluid that is determined via Darcy’s law in the field of gravity:
\begin{align}
\label{eq:IPM-1}
    \partial_t c+\mathrm{div}(uc)&=\nu\Delta c,\quad
    \\
\label{eq:IPM-2}
    \mathrm{div}(u)&=0,
    \\
\label{eq:IPM-3}
    u&=-\nabla p -(0,c).
\end{align}
Here $c = c(t,x,y)$ is the transported concentration, $u=u(t,x,y)$ is the vector field describing the fluid motion, $p=p(t,x,y)$ is the pressure,  and $\nu\geq0$ is a dimensionless parameter equal to an inverse of the P\'eclet number. The spatial domain $(x,y)$ can be either the whole space $\mathbb{R}^2$ or cylinder $[0,1]\times\mathbb{R}$ with periodic or no-flux boundary conditions. In what follows, we will consider a discretization in $x$.

There have been many recent papers analyzing the well-posedness questions for the inviscid IPM equation ($\nu=0$)~\cite{Castro-Cordoba-Lear-IPM-2019, Cordoba-Gancedo-Orive-2007}, lack of uniqueness of weak solutions~\cite{Cordoba-Faraco-Gancedo--non-uniqueness-2011, Szekelyhidi-2012} and questions of long time dynamics~\cite{Castro-Cordoba-Lear-IPM-2019, Elgindi-2017,IPM-relaxation-2024}; see also related macroscopic models~\cite{otto1999relaxation,CFG2023-macro-IPM}. The question of global regularity vs finite-time blow up is open for the IPM equation, see e.g.~\cite{2023-yao-kiselev-IPM}.

We are interested in studying the pattern formation for the initial conditions close to the unstable stratification:
\begin{align}
\label{eq-cpm1}
    c(0,x,y)=
    \begin{cases}
    +1,&y\geq0,
    \\
    -1,&y<0.
    \end{cases}
\end{align}
This corresponds to the heavier fluid on top and lighter on the bottom, and results into unstable displacement, as on Fig.~\ref{fig:fingers-Otto-Menon}a, also known as \textit{gravitational fingering} phenomenon. For this scenario and $\nu=0$ it is known that the problem is ill-posed (at least in the Muskat sense~\cite{muskat1934,ill-posed-muskat-2004, cordoba-gancedo-2007} considered as a free boundary problem). Fingering instability appears in subsurface~\cite{Groundwater-2013}, environmental \cite{Env1, Env2, Env3}, engineering \cite{Eng1}, biological \cite{Bio1, Bio2}, petroleum engineering applications \cite{TPM, Sorbie2, Sorbie3, Sorbie4} and combustion~\cite{zik1999fingering, zik1998fingering1, zik1998fingering2, kagan2008}. In particular, the related problem of  \textit{miscible viscous fingering} appears in petroleum industry  when a fluid of higher mobility displaces another of lower mobility (e.g., in the context of enhanced oil recovery (EOR) methods: polymer or gas flooding, important role plays speed of growth of the mixing zone \cite{Sorbie1, Lake, Sheng, Gao2011, GVB, GVB-SPE, CO2}).  In this case the analogue of Darcy's law~\eqref{eq:IPM-3} reads~as:
\begin{align}
\label{eq:Darcy-viscosity}
    u&=-K\cdot m(c)\nabla p.
\end{align}
Here $K$ is the porous media permeability tensor and $m(c)$ --- the mobility function. Equations \eqref{eq:IPM-1}, \eqref{eq:IPM-2}, and \eqref{eq:Darcy-viscosity} are called Peaceman model \cite{Peaceman1962}.

\begin{figure}[ht]
    \centering
    (a)\;\; 
    \includegraphics[width=0.4\textwidth]{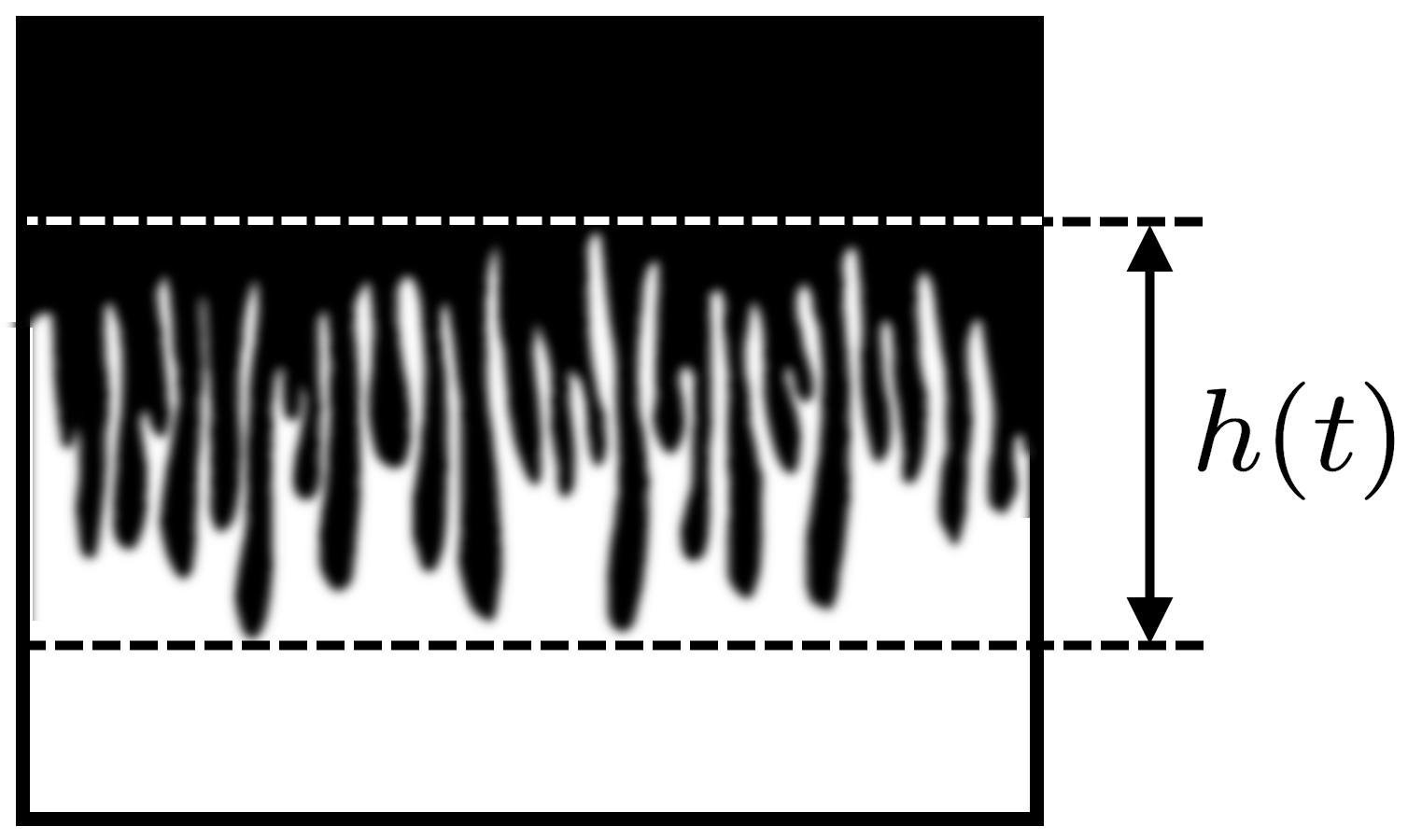}
    \hfil
    (b)
        \includegraphics[width=0.15\textwidth]{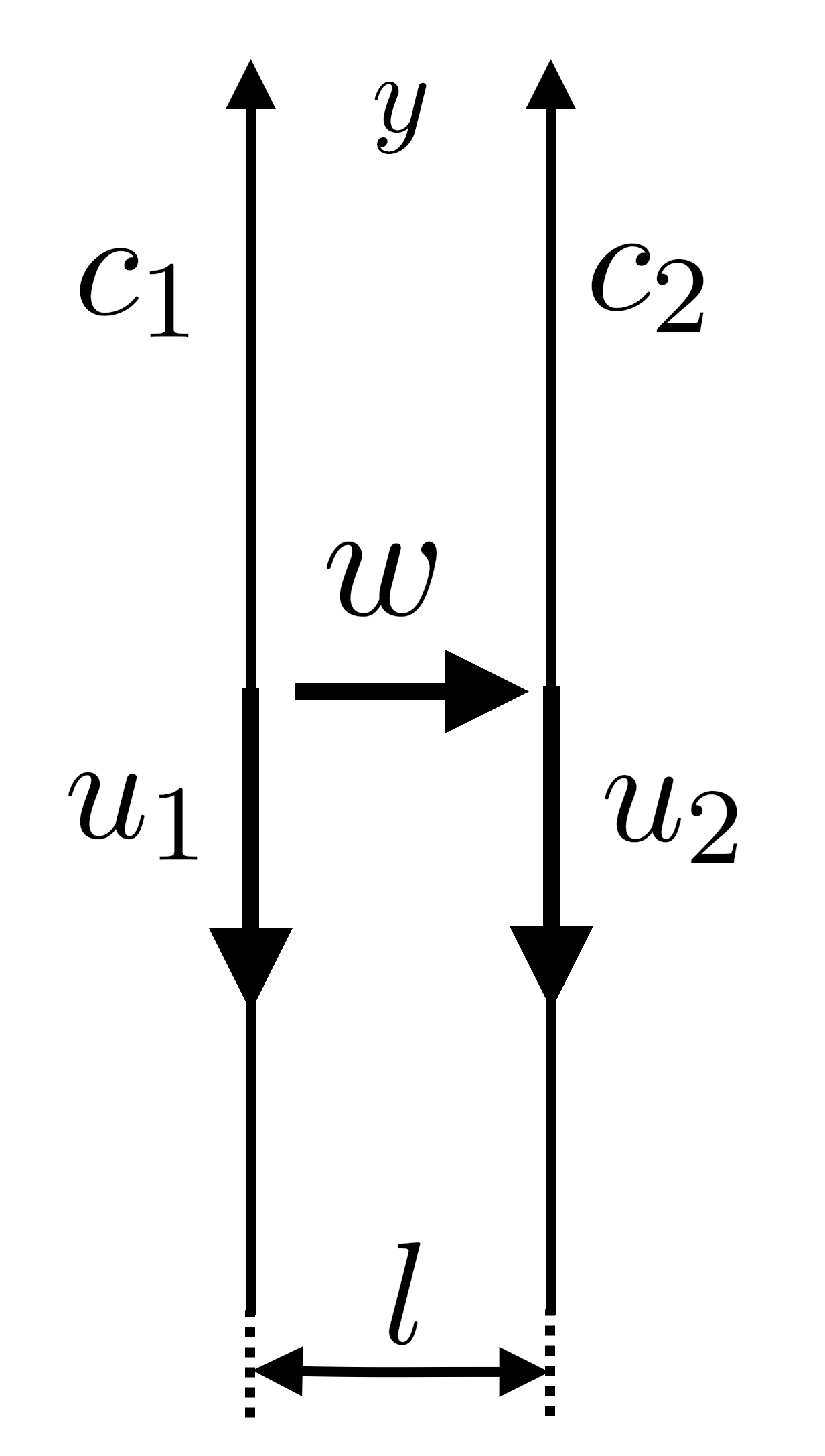}
    \caption{(a) Schematic representation of the gravitation fingering instability, $h(t)$ --- size of the mixing zone; (b) the two-tubes model. }
    \label{fig:fingers-Otto-Menon}
\end{figure}

For the inviscid IPM, this is the classical Saffman-Taylor instability. In~\cite{saffman-taylor-1958}, Saffman and Taylor discovered a one-parameter family of traveling wave solutions (which they called ``fingers''). For the viscous IPM, corresponding to miscible viscous fingers, the dynamics is distinguished by three regimes: (i) at early times, the flow is well described by linear stability theory; (ii) at intermediate times, the flow is dominated by nonlinear finger interactions which evolve into a mixing zone; and (iii)~at late times the flow regime consists of pair of counter-propagating fingers \cite{nijjer-2018}. 

Many laboratory and numerical experiments show
the linear growth of the mixing zone for moderate time regimes (viscous fingering: e.g. \cite{nijjer-2018, JCAM-Comp, 2023-intermediate-Tikhomirov}, gravitational fingering: \cite{MH-95, Wooding-1969, Boffetta1, Boffetta2, PRF2019}; see also surveys \cite{Homsy-1987, Survey-2017}).  A theoretical approach\footnote{In most of the results the initial data was considered to be  $c\in[0, 1]$; in what follows we provide all estimates scaled to initial data~\eqref{eq-cpm1}.} 
to estimate the width  $h(t)$ of the mixing zone  for the system~\eqref{eq:IPM-1}--\eqref{eq:IPM-3} was done in~\cite{otto-menon-2005}.  In particular, the authors get $h(t)\leq 4t$ using energy estimates, and provide pointwise estimates 
\begin{equation}\label{eqat}
    h(t)\leq 2t 
\end{equation}
for a reduced model that we call Transverse Flow Equilibrium (TFE; for generalisations see~\cite{otto-menon-2006, yortsos-salin-2006}) which consists of equations~\eqref{eq:IPM-1}--\eqref{eq:IPM-2} and the following expression for $u$ instead of the Darcy's law 
(here $(x,y)\in[0,1]\times\mathbb{R}$):
\begin{align}
\label{eq:TFE-2D-u}
    u=(u^x,u^y),\qquad u^y=\bar{c}-c, \qquad \bar{c}(t,y)=
    \int_{0}^1 c(t,x,y)\,dx.
\end{align}
Quantification of the size of the mixing zone in laboratory and numerical experiments for two-dimensional space does not give exact answer: it shows that (see \cite{MH-95, Wooding-1969, Survey-2017, Boffetta2})
\begin{align}
\label{eq:al1.34}
h(t) \sim \alpha t, \quad \mbox{for some $\alpha \in [1.34, 2]$}.   
\end{align}
For 3D IPM model the size of mixing zone is claimed to be smaller $\alpha = 0.86$ (see \cite{Boffetta2}).

Our motivation is to understand the mechanism of finger formation and sharpen existing estimates for the size of the mixing zone. We show in a simplified setting that there are two (interconnected) mechanisms of possible slowing down of the fingers, thus, a decrease in the size of the mixing zone: (1)~the convection in the transverse direction of the flow; (2) the presence of the intermediate concentration, that is the typical concentration inside the finger is $c^*\in(-1,1)$. In particular, our results suggest that estimates \eqref{eqat} could be improved. The difference between values $1.34$ and $2$ in \eqref{eq:al1.34} is not negligible and its clarification is needed. Additionally it was shown in~\cite{2023-intermediate-Tikhomirov} that, for the case of viscous fingers, intermediate concentration could lead to even more significant slowdown. See Appendix~\ref{ap:slowdown} for a more detailed explanation on what we mean by ``slowdown of fingers due to intermediate concentration''.

\subsection{Informal description of the model and main results}
In this paper, we introduce a new model for the gravitational fingering motivated by a finite-volume scheme with a simple upwind, that is widely used in multiphase flow simulations~\cite{Finite-volume-2000}. Our basic assumption is that the fluid displacement happens inside two vertical lines (``tubes''), and the transverse flow between the tubes is governed by the discrete Darcy's law: the velocity is equal to the pressure difference divided by the distance between the tubes. This results in a system of nonlinear reaction-diffusion-convection equations that we call the~\textit{two-tubes IPM~equations}.

More precisely, consider the two tubes denoted by 1 and 2. The model we study reads (see Fig.~\ref{fig:fingers-Otto-Menon}b; $y\in\mathbb{R}$, after rescaling we can take $\nu=1$)
\begin{align*}
    \partial_t c_1+\partial_y(u_1c_1)- \partial_{yy}c_1&=-f,
    \\
    \partial_t c_2+\partial_y(u_2c_2)- \partial_{yy}c_2&=f.
\end{align*}
Here $c_{1,2}=c_{1,2}(t,y)$ are the concentrations and $u_{1,2}=u_{1,2}(t,y)$ are the velocities in tubes 1 and 2. Meanwhile the function $f$ is responsible for the flow between the tubes and is defined by
\begin{align*}
    f=\begin{cases}
    \cfrac{u_T}{l} \cdot c_1,& u_T\geq 0,\quad \text{(fluid flows from tube 1 to 2)},\\
    \cfrac{u_T}{l} \cdot c_2,&u_T\leq 0,\quad \text{(fluid flows from tube 2 to 1)}.
    \end{cases}
\end{align*}
Here $u_T=u_T(t,y)$ is the velocity of the fluid in the transversal direction to the tubes ($T$ stands for ``Transversal''). Note that parameter $l>0$ can be interpreted as a distance between the tubes (or the cell size in $x$-direction in a finite-volume scheme); it plays a crucial role in our analysis. The system of equations becomes closed  if we supply it with expressions for the fluid velocities $u_{1,2,T}$, analogous to equations~\eqref{eq:IPM-2}--\eqref{eq:IPM-3}, see Section~\ref{sec:problem-statement} for the complete description and Appendix~\ref{ap:FV} for the schematic derivation of the model from the finite-volume scheme. 

\begin{figure}[ht]
    \centering
    (a)\quad
    \includegraphics[width=0.39\textwidth]{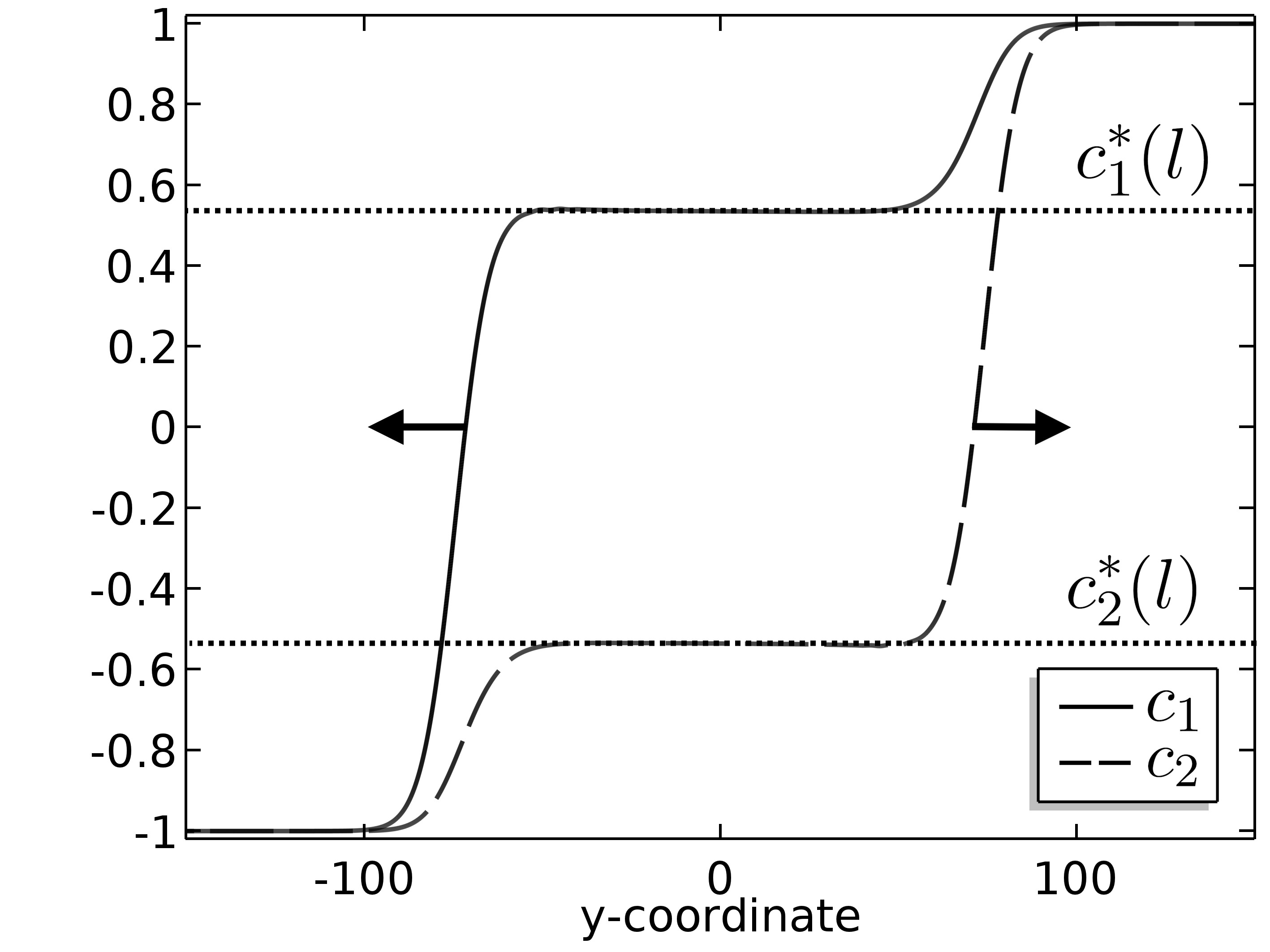}
    \hfil
    (b)\quad 
    \includegraphics[width=0.26\textwidth]{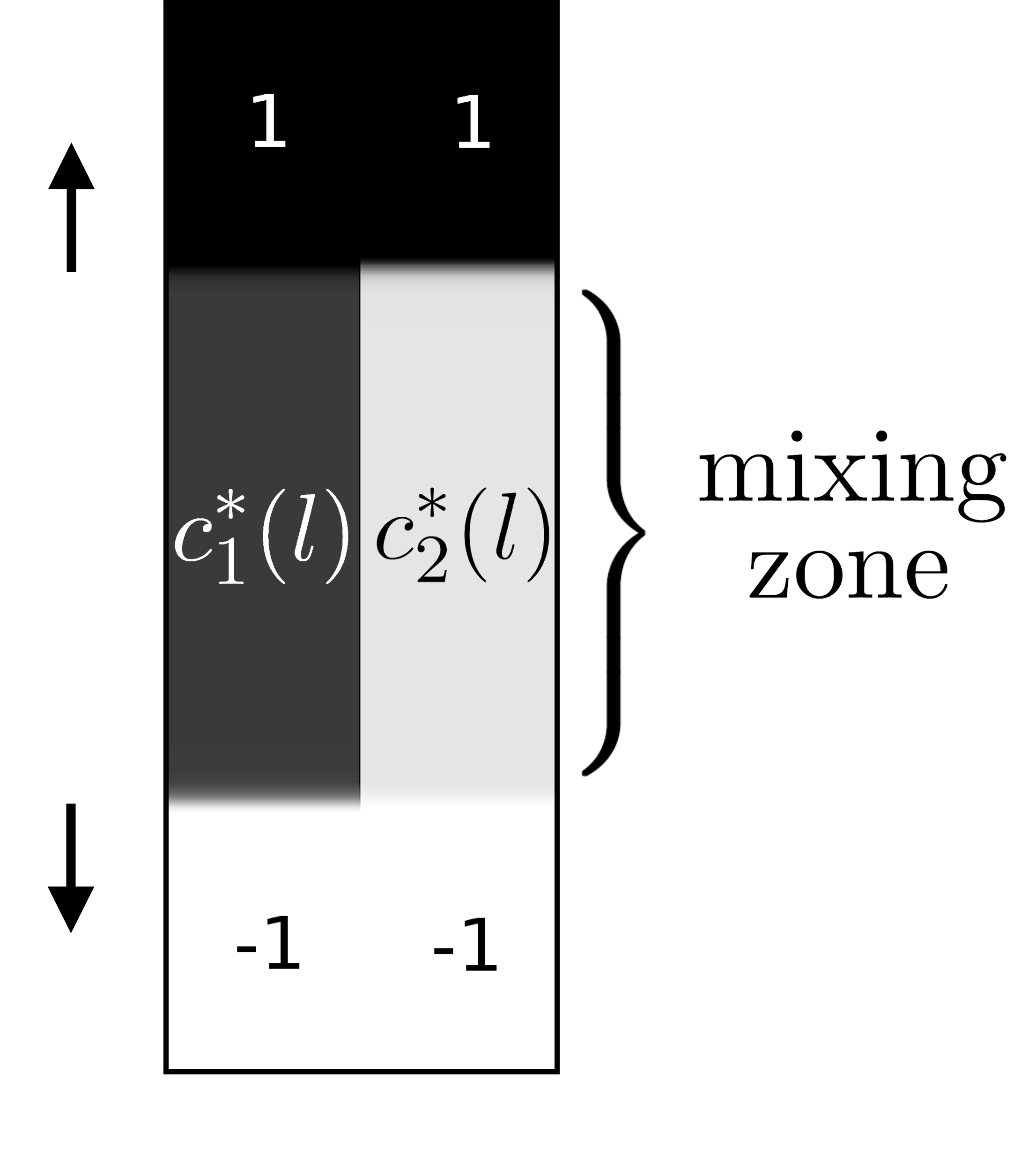}
    \caption{(a) Numerical evidence of propagating terrace consisting of two traveling waves; (b) Schematic representation of propagating terrace as two counter-propagating fingers.}
    \label{fig:cascades}
\end{figure}

The main result (Theorem~\ref{thm:main-Peaceman}) claims that for sufficiently small values of $l>0$ there exist two intermediate concentrations~$c_1^*(l)\in(-1,1)$, $c_2^*(l)\in(-1,1)$ and two traveling wave (TW) solutions that connect the states:
\begin{align*}
    (-1,-1) \xrightarrow{TW} (c_1^*(l),c_2^*(l))\xrightarrow{TW} (1,1).
\end{align*}
Moreover, the speeds of the traveling waves approach $-1/4$ and $1/4$ as $l\to0$. 
The numerical modelling of the two-tubes IPM model, see Fig.~\ref{fig:cascades}a,  shows that such combination of two traveling waves (we will call it a \textit{propagating terrace}) appears as a long-term limit for typical initial data with proper values at infinity.

The appearing propagating terrace allows for the following ``physical'' interpretation. The region between the two traveling waves corresponds to the mixing zone which consists of the two counter-propagating fingers: one with the constant concentration $c_1^*(l)$, the other ---  $c_2^*(l)$, see Fig.~\ref{fig:cascades}b. Notice that we get the linear growth of the mixing zone for granted, since the boundary of the mixing zone is a traveling wave moving at a constant speed.

Roughly speaking the proof of Theorem~\ref{thm:main-Peaceman} is based on the perturbation argument and valid only for small values of parameter $l>0$. A natural question to ask~is
$$
 \mbox{``What system of equations do we get in a singular limit $l=0$? (at least formally)''}
$$
 
We call this system \textit{the two-tubes Transverse Flow Equilibrium model} (see~\eqref{eq:2tubes-diffusive-system-1}--\eqref{eq:peretok},~\eqref{eq:IPM-2-tubes-w},~\eqref{eq:gravitational-Darcy-TFE-2tubes}) as it turns out to be a two-tubes analogue of the above mentioned 2D TFE model~\eqref{eq:IPM-1}--\eqref{eq:IPM-2},~\eqref{eq:TFE-2D-u}. The intuitive idea behind TFE model is that a certain equilibrium is assumed in the transverse direction to the gravity (for the case of viscous fingering see e.g.~\cite{armiti-rohde-2014,armiti-rohde-2019},~\cite{yortsos1995-TFE}\footnote{Yortsos~\cite{yortsos1995-TFE} called it vertical equilibrium (VE)} and references therein). 
In other words the ``fluid flows (or equilibrates) much faster in the transverse direction than to the main direction of the flow''. Mathematically, TFE model can be derived from the assumption that the pressure $p(t,x,y)\approx p(t,y)$ doesn't depend on $x$-variable up to the first approximation. This situation occurs, for example, in an asymptotically thin domain. Intuitively this asymptotic limit for the two-tubes model corresponds to  $l\to0$. See Appendix~\ref{ap:TFE-IPM} for more precise argument on the connection between IPM and TFE models.  Note that 2D TFE model also exhibit a fingering instability and has both theoretical advantages~\cite[Section 2.3]{otto-menon-2006} and remarkable reduction in computational complexity~\cite{armiti-rohde-2019}, and thus has its own value and interest.

The second main result (Theorem~\ref{thm:main-TFE}) establishes  the existence and uniqueness of a propagating terrace for the two-tubes TFE model. Surprisingly, in  Theorem~\ref{thm:main-TFE} we managed to find explicitly the trajectories of the traveling wave dynamical system. 
Note that each of the traveling waves  connecting 
$$(-1,-1) \xrightarrow{TW} (c_1^*,c_2^*) 
\quad \text{or}\quad  
(c_1^*,c_2^*)\xrightarrow{TW} (1,1)
$$ 
exists for a one-parameter family of $(c_1^*,c_2^*)$,  but the propagating terrace 
$$
(-1,-1) \xrightarrow{TW} (c_1^*,c_2^*)\xrightarrow{TW} (1,1)
$$
is unique.

Let us emphasize an important relation between the proofs of Theorem~\ref{thm:main-Peaceman} and Theorem~\ref{thm:main-TFE}.  Traveling wave dynamical system corresponding to the TFE model  is  a formal singular limit as $l \to 0$ of traveling wave dynamical system corresponding to the IPM model. Using geometric singular perturbation theory  and invariant manifolds it is possible to extend properties of heteroclinic trajectories from the TFE system to the IPM system with sufficiently small $l>0$. See the end of Section~\ref{sec:problem-statement} for details.

Propagating fronts in the layered media appear in different contexts, to mention a few: multi-lane traffic flow~\cite{multi-lane-1998,2019-Holden-Risebro, 2024-goatin}, combustion~\cite{schecter-2006-two-layers}, foam flow~\cite{foam-2022-two-layers}, two-phase flow in porous media in layers with different properties~\cite{debbabi2017} (permeabilities, porosities, thermal conductivities etc), and voltage conduction along the nerve fibers~\cite{bose-1995}.

The novel model we present here is natural and simple, illustrates the possible mechanism of the reducing the speed of fingers due to interflow between the tubes, while at the same time allowing for a rigorous mathematical treatment.

The paper is organised as follows: in Section~\ref{sec:problem-statement} we formulate the two-tubes model (for IPM and TFE) and state the main results --- Theorem~\ref{thm:main-Peaceman} and Theorem~\ref{thm:main-TFE} --- the existence of the propagating terrace consisting of two traveling waves. Also we provide a sketch of their proofs. In Section~\ref{sec:proof-thm2} we give a 
detailed analysis of the two-tubes TFE model: derivation and properties of the traveling wave dynamical system, study the existence of traveling waves (Theorem~\ref{thm:TW-TFE}) and propagating terrace (Theorem~\ref{thm:main-TFE}). In Section~\ref{sec:proof-peaceman} we give a detailed analysis of the two-tubes IPM model: derivation and properties of the traveling wave dynamical system,  existence of traveling waves (Theorem~\ref{thm:TW-IPM}) and propagating terrace (Theorem~\ref{thm:main-Peaceman}).  Section~\ref{sec:discussions} is devoted to discussions and possible generalizations. In Appendix~\ref{ap:FV} we give a schematic derivation of the presented models from the upwind finite-volume scheme. Appendix~\ref{ap:TFE-IPM} is devoted to description of the  connection between IPM and TFE models. In Appendix~\ref{ap:slowdown} we discuss the term ``slowdown of fingers due to intermediate concentration''.

\section{Problem statement and main result}
\label{sec:problem-statement}
Our aim  is to consider the simple setting in which we can analyze the impact of the flow in transverse direction (with respect to gravity) on the speed of propagation of the mixing zone.  The idea is to consider IPM equations~\eqref{eq:IPM-1}--\eqref{eq:IPM-3} and discretize the space in $x$ direction. Note that similar idea was proposed in~\cite[Section 3]{yortsos1995-TFE} and studied numerically in~\cite[Section 3.4]{armiti-rohde-2019} in the context of the two-phase flows in porous media.

In this paper, we focus on 
the simplest possible formulation --- the case of two tubes. The $n$-tubes model, $n>2$, is a subject for future work, some heuristic results can be found  in~\cite{Oberwolfach-2024}. Let $c_{1,2}(t,y)$ be the concentrations and $u_{1,2}(t,y)$ be the velocities in tube 1 and 2. Then the governing equations of the flow inside each tube are:
\begin{align}
\label{eq:2tubes-diffusive-system-1}
    \partial_t c_1+\partial_y(u_1c_1)- \partial_{yy}c_1&=-f,
    \\
    \label{eq:2tubes-diffusive-system-2}
    \partial_t c_2+\partial_y(u_2c_2)- \partial_{yy}c_2&=f.
\end{align}
The flow $f$ from tube 1 to tube 2 equals:
\begin{align}
\label{eq:peretok}
    f=\begin{cases}
    \cfrac{u_T}{l} \cdot c_1,& u_T\geq 0,\\
    \cfrac{u_T}{l} \cdot c_2,& u_T\leq 0.
    \end{cases}
\end{align}
Here $u_T=u_T(t,y)$ is the velocity of the fluid in the transversal direction to the tubes.  
In what follows we will consider two models for the velocities $u_{1,2,T}$ that correspond to IPM and TFE models.

\subsection{Two-tubes IPM model: definition and main theorem}
 Let $p_{1,2}(t,y)$ be the pressures in tubes 1 and 2. The  
 analogue of the Darcy's law~\eqref{eq:IPM-3}, is the following relation between the pressure and velocity:
\begin{align}
\label{eq:gravitational-Darcy-2tubes-a}
    &\text{(Darcy's law in tube 1 and 2)}& u_1& = -\partial_y p_1 -c_1,&
    u_2& = -\partial_y p_2 -c_2,
    \\
    \label{eq:gravitational-Darcy-2tubes-b}
    &\text{(Darcy's law between tubes)}
    & u_T &= \frac{p_1-p_2}{l}.
\end{align}
Here $l>0$ is a parameter that corresponds to the distance between the tubes. The semi-discrete analogue of condition~\eqref{eq:IPM-2} writes as:
\begin{align}
\label{eq:IPM-2-tubes-w}
    \cfrac{u_T}{l}+\partial_y u_1=0, \quad -\cfrac{u_T}{l} + \partial_y u_2 = 0.
\end{align}
We call the system~\eqref{eq:2tubes-diffusive-system-1}--\eqref{eq:IPM-2-tubes-w} the \textit{two-tubes IPM equations}.

Instead of considering the pressures $p_1$ and $p_2$ individually, the natural variable to consider is the pressure drop $q:=p_2-p_1$. Thus, the equations~\eqref{eq:gravitational-Darcy-2tubes-a}--\eqref{eq:IPM-2-tubes-w} can be simplified
\begin{align}
\label{eq:gravitational-Darcy-2tubes-c}
    \partial_y q& = u_1 - u_2 + c_1 - c_2,&
    \partial_y u_1 &= \cfrac{q}{l^2}=-\cfrac{u_T}{l}.
\end{align}
Moreover, relations~\eqref{eq:IPM-2-tubes-w} imply $\partial_y(u_1+u_2)=0$. As we assume later the conditions~\eqref{eq:infty-cond}, we deduce 
\begin{align}
\label{eq:u1+u2=0}
    u_1+u_2\equiv0.    
\end{align}
One can think of the model~\eqref{eq:2tubes-diffusive-system-1}--\eqref{eq:IPM-2-tubes-w} as a limit of the finite-volume discretization of the model~\eqref{eq:IPM-1}--\eqref{eq:IPM-3} with spatial grid $2\times \mathbb{Z}$ of cells having sizes $l\times h$ as $h\to0$, see Appendix A for details. Equation~\eqref{eq:peretok} corresponds to upwind scheme.
In Appendix~\ref{ap:TFE-IPM} we give a comment about the analogue of the parameter $l$ for the 2D IPM model.

We are interested in finding solutions $(c_1,c_2,u_1,u_2,q)$ of~\eqref{eq:2tubes-diffusive-system-1}--\eqref{eq:peretok},~\eqref{eq:gravitational-Darcy-2tubes-c},~\eqref{eq:u1+u2=0} satisfying the following conditions at $\pm\infty$ (physical meaning --- heavier fluid up, lighter fluid down; no flow inside the tubes and between them at $\pm\infty$):
\begin{align}
\label{eq:infty-cond}
    c_{1,2}(-\infty)=-1,\qquad c_{1,2}(+\infty)=1,\qquad q(\pm\infty)=0, \qquad u_{1,2}(\pm\infty)=0.\qquad
\end{align}

Recall that a function $g(t,y):\mathbb{R}_+\times\mathbb{R}\to\mathbb{R}^n$, $n\in\mathbb{N}$, is a \textit{traveling wave}\cite{gilding2004travelling, smoller2012shock} with speed~$v\in\mathbb{R}$ connecting states $g_-\in\mathbb{R}^n$ and $g_+\in\mathbb{R}^n$, if it has the form $g(t,y)=\tilde{g}(y-vt)$, where $\tilde{g}:\mathbb{R}\to\mathbb{R}^n$ is a continuous function, which satisfies $\tilde{g}(-\infty)=g_-$, $\tilde{g}(+\infty)=g_+$.

Numerically, we observe convergence of the solution of~\eqref{eq:2tubes-diffusive-system-1}--\eqref{eq:peretok},~\eqref{eq:gravitational-Darcy-2tubes-c}--\eqref{eq:infty-cond} to a combination of two traveling waves.  Following~\cite{DGM-terraces-2014} that describes fronts in reaction-diffusion equations (see also~\cite{polacik2017,polacik2020, giletti2020,giletti2022}), we introduce a notion of \textit{propagating terrace} for our problem.

\begin{definition}
\label{def:propag-terrace}
    A \textbf{propagating terrace} connecting $\alpha\in\mathbb{R}^5$ to $\beta\in\mathbb{R}^5$ is a pair of finite sequences $(\sigma_k)_{0\leq k\leq N}$ and $(g_k)_{1\leq k\leq N}$ such that:
    \begin{itemize}
        \item Each $\sigma_k=(c_{1k},c_{2k},u_{1k},u_{2k},q_k)$ is a stationary solution  of~\eqref{eq:2tubes-diffusive-system-1}--\eqref{eq:peretok},~\eqref{eq:gravitational-Darcy-2tubes-c}, \eqref{eq:u1+u2=0} and
        $\sigma_0=\alpha$, $\sigma_N=\beta$.
        \item $g_k$ is a traveling wave solution of~\eqref{eq:2tubes-diffusive-system-1}--\eqref{eq:peretok},~\eqref{eq:gravitational-Darcy-2tubes-c},~\eqref{eq:u1+u2=0} connecting $\sigma_{k-1}$ to $\sigma_{k}$, $1\leq k\leq N$.
        \item The speed $v_k\in\mathbb{R}$ of each $g_k$ satisfies $v_1\leq v_2\leq\ldots\leq v_N$.
    \end{itemize}
\end{definition}

The following theorem is the first main result of the paper.

\begin{theorem}
\label{thm:main-Peaceman}
Consider the two-tubes IPM equations~\eqref{eq:2tubes-diffusive-system-1}--\eqref{eq:peretok},~\eqref{eq:gravitational-Darcy-2tubes-c},~\eqref{eq:u1+u2=0}. Then there exists  sufficiently small $l_0>0$ such that for all $l\in(0,l_0)$ there exist a propagating terrace of two traveling waves with speeds $v_1^*(l)$, $v_2^*(l)$ connecting the states 
\begin{align*}
  \sigma_0=(-1,-1,0,0,0),\quad \sigma_1=(c_1^*(l),c_2^*(l), u_1^*(l),u_2^*(l),0), \quad \sigma_2=(1,1,0,0,0).
\end{align*}

Moreover, as $l\to0$ we obtain:
\begin{align*}
        &\lim\limits_{l\to 0} c_1^*(l)=-\lim\limits_{l\to 0} c_2^*(l)=1/2, &
        \lim\limits_{l\to 0} v_2^*(l)=-\lim\limits_{l\to 0} v_1^*(l)=1/4.
\end{align*}
\end{theorem}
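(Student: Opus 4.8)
The plan is to exploit the explicit solvability of the $l=0$ limit (the two-tubes TFE model, whose propagating terrace is constructed in Theorem~\ref{thm:main-TFE}) and to transfer that structure to small $l>0$ via geometric singular perturbation theory, exactly as advertised at the end of the introduction. Concretely, I would first reduce the two-tubes IPM system to a traveling-wave ODE system. Substituting the ansatz $c_i(t,y)=\tilde c_i(\xi)$, $\xi=y-vt$, into~\eqref{eq:2tubes-diffusive-system-1}--\eqref{eq:peretok} together with~\eqref{eq:gravitational-Darcy-2tubes-c} and the constraint~\eqref{eq:u1+u2=0}, one obtains a first-order autonomous system in the variables $(c_1,c_2,u_1,q)$ (with $u_2=-u_1$) depending on the wave speed $v$ as a parameter. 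The key structural observation is that the relation $\partial_y u_1 = q/l^2$ carries the singular parameter: as $l\to 0$ the variable $q$ is slaved, and the system degenerates to the \emph{slow} TFE traveling-wave system. This is the fast--slow splitting that makes Fenichel theory applicable.

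Having set up the singularly perturbed system, I would identify the rest points corresponding to $\sigma_0,\sigma_1,\sigma_2$ and their hyperbolicity. For the unperturbed ($l=0$) problem, Theorem~\ref{thm:main-TFE} furnishes explicit heteroclinic trajectories realizing $(-1,-1)\to(c_1^*,c_2^*)$ and $(c_1^*,c_2^*)\to(1,1)$ with speeds tending to $-1/4$ and $1/4$, together with the values $c_{1,2}^*\to\pm 1/2$. The plan is to show that each such heteroclinic is a transverse intersection of the unstable manifold of the departing equilibrium and the stable manifold of the arriving equilibrium inside the critical manifold. Then I would invoke persistence of normally hyperbolic invariant manifolds and of their stable/unstable manifolds under the small perturbation $l>0$: the critical manifold perturbs to a nearby slow manifold, and the transverse heteroclinic connections persist. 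Because the wave speed $v$ enters as a parameter and the connection for $l=0$ is obtained at isolated speeds $v=\mp 1/4 + o(1)$, I expect the persistence argument to also pin down perturbed speeds $v_1^*(l),v_2^*(l)$ by a transversality/implicit-function argument in the $(\text{phase space})\times v$ extended system, yielding $v_2^*(l)\to 1/4$, $v_1^*(l)\to -1/4$ and $c_1^*(l)\to 1/2$, $c_2^*(l)\to -1/2$. Finally, the ordering $v_1^*(l)\le v_2^*(l)$ required by Definition~\ref{def:propag-terrace} follows immediately from these limits for $l$ small.

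The main obstacle, I expect, is verifying the hypotheses of geometric singular perturbation theory rather than the abstract persistence conclusion. Two points need care. First, normal hyperbolicity of the critical manifold must be checked: one must compute the linearization transverse to the slow manifold and confirm that the fast eigenvalue(s) associated with the $q$-direction stay bounded away from the imaginary axis uniformly along the relevant compact piece of the heteroclinic orbits; the factor $1/l^2$ in~\eqref{eq:gravitational-Darcy-2tubes-c} suggests this fast direction is strongly hyperbolic, but the sign and uniformity must be established. Second, the equilibria $\sigma_0,\sigma_1,\sigma_2$ may fail to be hyperbolic in all directions (for traveling-wave systems the asymptotic states are frequently non-hyperbolic, e.g.\ with zero eigenvalues coming from the diffusion-advection balance), so the connections are likely to involve center-stable/center-unstable manifolds; the transversality of the intersection must then be formulated with respect to these weak directions. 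I would handle this by passing to the explicit TFE trajectories from Theorem~\ref{thm:main-TFE}, reading off the asymptotic rates of approach, and checking the dimension count so that the relevant stable and unstable manifolds meet transversally with the correct codimension. The remaining steps---setting up the fast--slow coordinates, applying Fenichel's theorems, and taking limits---are then essentially routine once this hyperbolicity and transversality bookkeeping is in place, and the detailed traveling-wave analysis is deferred to Section~\ref{sec:proof-peaceman}.
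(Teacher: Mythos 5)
Your overall skeleton (traveling-wave ODE with a fast $q$-variable, Fenichel reduction to the TFE system at $l=0$, persistence of transverse heteroclinics) is indeed the paper's strategy for producing the \emph{individual} traveling waves (Theorem~\ref{thm:TW-IPM}). But there is a genuine gap in how you propose to obtain the \emph{terrace}, i.e.\ the selection of the speeds and of the common intermediate state $\sigma_1$. You assert that the $l=0$ connections occur ``at isolated speeds $v=\mp1/4+o(1)$'' and that an implicit-function argument in the extended $(\text{phase space})\times v$ system pins down $v_{1,2}^*(l)$. That premise is false: by Theorem~\ref{thm:TW-TFE}, for \emph{every} $v<0$ there is a wave leaving $(-1,-1)$ and for \emph{every} $v>0$ a wave arriving at $(1,1)$; the speed is a free parameter and the terminal state varies with it along the curves $\mathcal{H}_{-1}$ and $\mathcal{H}_{1}$. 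Nothing in the persistence of a single heteroclinic selects a speed. The speeds $\mp1/4$ are selected only by the \emph{matching condition} that the two waves share the same intermediate state, i.e.\ by the intersection $\mathcal{H}_1\cap\mathcal{H}_{-1}=\{(\pm\tfrac12,\mp\tfrac12)\}$. Correspondingly, for $l>0$ the paper first builds the whole one-parameter families (curves $\mathcal{H}_{\pm1}(l)$ of endpoints, depending continuously on $(v,l)$) and then proves the terrace exists by a topological argument showing $\mathcal{H}_1(l)$ and $\mathcal{H}_{-1}(l)$ still intersect near $(\tfrac12,-\tfrac12)$, because they are $C^0$-close to two curves crossing topologically transversally. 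This intersection step is the heart of Theorem~\ref{thm:main-Peaceman} and is absent from your proposal; without it you have two families of waves with no reason to concatenate into a terrace.

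Two further points you should not gloss over. First, the vector field of the traveling-wave system is only Lipschitz, not $C^1$, because the upwind flux~\eqref{eq:peretok} produces a term $a|q|/l^2$; Fenichel theory and the invariant-manifold persistence results require smoothness, so one must fix a sign branch ($q\ge0$ or $q\le0$), run the whole argument on the smooth branch, and then verify a posteriori that the constructed heteroclinic indeed satisfies $q\ge0$ (this is a separate maximum-principle-type argument in the paper, Step 4 of the proof of Theorem~\ref{thm:Peaceman-reformulated}). Second, your remark about non-hyperbolic asymptotic states is on target, but note the concrete resolution: the fixed points form a two-dimensional manifold $M$, and the relevant unstable object is the three-dimensional unstable manifold of a whole disc of equilibria $S^*$ (not of a single point), which meets the two-dimensional stable manifold of $A$ transversally in $\mathbb{R}^4$; it is this dimension count that makes the persistence argument work and simultaneously yields the continuous dependence of the endpoint on $v$, which in turn feeds the intersection argument above.
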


\begin{remark}\label{rem:pt-names}

    In the present paper we study existence of propagating terrace. For stability analysis of propagating terraces in different contexts see for instance mentioned above papers \cite{DGM-terraces-2014, polacik2017,polacik2020}.  In literature there are several similar notions known by different names: ``stacked'' combination of wave fronts (or ``minimal decomposition'') in the classical paper by Fife and McLeod~\cite{fife-mcleod-1977} (see also~\cite{roquejoffre1996,iida2011stacked}); wave trains~\cite{feinberg1991,roquejoffre1996,volpert1996}; ordered systems of waves~\cite{volpert2001-systemsofwaves}; minimal systems of waves~\cite[Sect.~I.1.5]{3volpert1994-book},~\cite{mejai1999}; concatenated traveling waves \cite{lin2015stability,lin2016stability};  multifronts or multipulses \cite{Wright2009, BST2008}. In the cited papers not only the questions of existence, but also the stability are studied.
\end{remark}

The proof of Theorem~\ref{thm:main-Peaceman} is based on the fine analysis of the auxiliary system that we call two-tubes TFE model and describe below.

\subsection{Two-tubes TFE model: definition and main theorem}
\label{subsec:TFE-def-thm}
Consider the so-called Transverse Flow Equilibrium (TFE) model for velocities:
\begin{align}
\label{eq:gravitational-Darcy-TFE-2tubes}
     u_1=(c_2-c_1)/2;&& u_2=-u_1.
\end{align}
This is a discrete analogue of the two-dimensional model originally introduced in~\cite{Wooding-1969}, and analyzed in~\cite{otto-menon-2005}. We call the system~\eqref{eq:2tubes-diffusive-system-1}--\eqref{eq:peretok}, \eqref{eq:IPM-2-tubes-w}, \eqref{eq:gravitational-Darcy-TFE-2tubes} the \textit{two-tubes TFE equations}. In a similar way to Definition~\ref{def:propag-terrace} the notion of the propagating terrace can be introduced for the two-tubes TFE equations. In this case  $\sigma_k=(c_{1k},c_{2k}),\alpha,\beta\in\mathbb{R}^2$. 

The second principal result of the paper is stated as follows.

\begin{theorem}
\label{thm:main-TFE} 
Consider the two-tubes TFE equations~\eqref{eq:2tubes-diffusive-system-1}--\eqref{eq:peretok},~\eqref{eq:IPM-2-tubes-w},~\eqref{eq:gravitational-Darcy-TFE-2tubes}. Then there exist exactly two propagating terraces of two traveling waves with speeds 
$$v_1^*=-1/4, \quad v_2^*=1/4$$ 
connecting the states
\begin{align*}
  \sigma_0=(-1,-1),\quad  \sigma_1=(1/2,-1/2) \;(\text{or } \sigma_1=(-1/2,1/2)), \quad \sigma_2=(1,1).  
\end{align*}
\end{theorem}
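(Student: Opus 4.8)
The plan is to reduce the PDE problem to the study of heteroclinic orbits of an ordinary differential system, exploiting the explicit simplicity of the TFE velocity law~\eqref{eq:gravitational-Darcy-TFE-2tubes}. First I would substitute the traveling-wave ansatz $c_{1,2}(t,y)=C_{1,2}(\xi)$, $\xi=y-vt$, into~\eqref{eq:2tubes-diffusive-system-1}--\eqref{eq:2tubes-diffusive-system-2}, using $u_1=(c_2-c_1)/2$, $u_2=-u_1$, and $f$ from~\eqref{eq:peretok}. This converts the system into a first-order ODE dynamical system in the variables $(C_1,C_2,C_1',C_2')$ with parameter $v$. The constant states $\sigma_0=(-1,-1)$, $\sigma_2=(1,1)$, and any candidate intermediate state $\sigma_1=(c_1^*,c_2^*)$ must be equilibria of this system; imposing that $\sigma_1$ be a stationary solution (so that $f$ balances and the advection terms are consistent at the plateau) should pin down the admissible intermediate concentrations.

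Next I would analyze the two heteroclinic connections separately. For the lower wave $(-1,-1)\to(c_1^*,c_2^*)$ and the upper wave $(c_1^*,c_2^*)\to(1,1)$ I would linearize at each equilibrium, compute the eigenvalues, and determine the dimensions of the stable and unstable manifolds as functions of $v$. Because the excerpt states that for each connection the endpoint $(c_1^*,c_2^*)$ comes in a one-parameter family, I expect a one-parameter family of connecting orbits to exist for a range of speeds, with the speed $v$ and the plateau value $(c_1^*,c_2^*)$ linked. The key selection principle is the terrace ordering condition $v_1\le v_2$ together with the matching requirement that the \emph{same} intermediate state $\sigma_1$ terminate the first wave and initiate the second; this is what should force uniqueness (up to the stated symmetry $\sigma_1=(1/2,-1/2)$ versus $(-1/2,1/2)$) and select the specific speeds $v_1^*=-1/4$, $v_2^*=1/4$.

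The most attractive—and, I suspect, the decisive—step is to find the connecting trajectories explicitly, as the excerpt advertises (``we managed to find explicitly the trajectories''). I would look for an algebraic invariant (a first integral or an invariant curve in the $(C_1,C_2)$ plane, perhaps of the form $C_2=\phi(C_1)$ or a conserved quantity making $C_1'$ a function of $C_1$) that reduces each heteroclinic orbit to a scalar first-order ODE of logistic/Fisher–KPP type. The transverse-equilibrium constraint $u_2=-u_1$ and the antisymmetry $c\mapsto-c$, $y\mapsto-y$ of the whole problem strongly suggest that the two waves are mirror images, so solving one explicitly yields the other. Once the scalar reduction is in hand, the monotone front profile and its speed follow from a standard phase-line argument.

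The main obstacle I anticipate is the piecewise definition of the interflow $f$ in~\eqref{eq:peretok}, which switches between $c_1$ and $c_2$ according to the sign of $u_T=u_1 l/\dots$ (equivalently the sign of $c_2-c_1$). Along a candidate front this sign may change, so I must either verify that $u_T$ keeps a constant sign along each heteroclinic orbit—so that a single branch of $f$ governs the whole wave—or carefully track a switching surface and glue trajectories across it while preserving $C^1$-regularity of the profile. Establishing the correct sign of $c_2-c_1$ on each wave (and its flip between the lower and upper waves, consistent with the two symmetric choices of $\sigma_1$) is the crux: it determines which branch of $f$ is active, hence the precise ODE whose explicit solution delivers the speeds $\pm1/4$ and the plateau values $\pm1/2$, and it is also what rules out any third terrace, giving the ``exactly two'' conclusion.
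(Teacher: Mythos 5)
Your overall route coincides with the paper's: traveling-wave ansatz, reduction to a four-dimensional ODE system, explicit invariant curves that reduce each connection to a scalar first-order ODE, sign-tracking of the upwind switch, and selection of the terrace by requiring a common intermediate plateau for the two waves (the intersection $\mathcal{H}_1\cap\mathcal{H}_{-1}$ in Section~\ref{subsec:prop-terrace-TFE}). There is, however, one concrete error at the step you yourself call the crux. In the TFE model $u_T$ is \emph{not} proportional to $u_1$ or to $c_2-c_1$: it is determined by the incompressibility relation~\eqref{eq:IPM-2-tubes-w}, which gives $u_T=-l\,\partial_y u_1=\tfrac{l}{2}\,\partial_\xi(c_1-c_2)$. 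So the upwind branch in~\eqref{eq:peretok} is selected by the sign of the \emph{derivative} $r=\partial_\xi(c_1-c_2)$, and the switching surface is $\{r=0\}$, not $\{c_1=c_2\}$. This is not harmless: on the wave connecting to $(1,1)$ the two criteria happen to agree along the orbit (there $a=c_1-c_2$ decays monotonically to $0$, so $r$ and $c_2-c_1$ have the same sign), but on the wave emanating from $(-1,-1)$ the quantity $a$ moves monotonically \emph{away} from $0$, so $r$ and $c_2-c_1$ have opposite signs and your rule selects the wrong branch of $f$ throughout. You would then be computing the lower wave --- and hence the speed $-1/4$ and the plateau $\pm\tfrac12$ --- from the wrong piecewise vector field, and the global classification needed for ``exactly two'' would be performed on the wrong system wherever $\mathrm{sign}(r)\neq\mathrm{sign}(c_2-c_1)$.

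Two further points. The step ``linearize at each equilibrium and determine the dimensions of the stable and unstable manifolds'' does not apply directly: the equilibria of the traveling-wave system~\eqref{eq:4dim-dyn-sys-TFE} form the two-dimensional continuum $\{r=s=0\}$ (so every equilibrium carries a two-dimensional center direction), and the right-hand side is only Lipschitz across $\{r=0\}$ because of the $|r|$ term, so smooth invariant-manifold theory is unavailable; the paper replaces it with bespoke monotonicity estimates (Lemmas~\ref{lm:a0-not-0}--\ref{lm:S1}) showing that \emph{every} orbit converging to an equilibrium as $\xi\to\mp\infty$ must lie on one of the four explicit invariant half-planes of Proposition~\ref{rm:inv-manifolds}. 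That exhaustiveness statement (Lemma~\ref{lm:inv-man}) is the bulk of the work and is precisely what upgrades ``we exhibited two explicit waves'' to ``there are exactly two terraces''; your proposal asserts this conclusion but contains no argument for it. Finally, imposing that $\sigma_1$ be a stationary solution pins down nothing, since every constant pair $(c_1^*,c_2^*)$ is an equilibrium; the intermediate state is selected only by the existence of the connections (which ties the plateau to the speed, $\mathcal{H}_{\pm1}$ being one-parameter families) together with the matching condition, as you correctly describe afterwards.
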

See Remark~\ref{rmk:uniqueness} explaining an obstruction to prove uniqueness of the traveling wave solutions in Theorem~\ref{thm:main-Peaceman} in contrast to Theorem~\ref{thm:main-TFE}.

Theorem~\ref{thm:main-TFE} can be interpreted in the following way: the width of the mixing zone for the two-tubes TFE equations is $h(t)\sim \frac12t$, which means a slowdown compared to estimates~\eqref{eqat}. We believe that the interflow between the tubes and the presence of the intermediate concentration are responsible for such behavior. For more details see Appendix~\ref{ap:slowdown}.

\subsection{Scheme of proof of the main theorems}
The logical framework of our analysis for the two-tubes TFE and IPM models is as follows:
\begin{itemize}
    \item[Step\phantom{I} I:]  
    \textit{Study the structure of the set of traveling wave (TW) solutions.} 
    \item[Step II:] \textit{Study the existence of propagating terraces consisting of two traveling waves.} 
\end{itemize}

These two steps are performed differently for the two-tubes TFE and IPM models. For the TFE case we provide explicit constructions. For the IPM model we use a perturbation argument valid only for small enough $l>0$. Note that in our reasoning TFE model plays a role of a singular limit for IPM as $l\to0$, thus we start the discussion with the two-tubes TFE model.

\subsubsection{Two-tubes TFE model}
We give a detailed analysis of the two-tubes TFE model in Section~\ref{sec:proof-thm2}. Step I is done in Section~\ref{sec:TW-TFE-main}; step II --- in Section~\ref{subsec:prop-terrace-TFE}.

\smallskip
\begin{itemize}
    \item[Step I.] \textit{Traveling wave solutions.} The main result states as follows.
\end{itemize}
    
    \begin{theorem} 
    \label{thm:TW-TFE}
    Consider the two-tubes TFE equations~\eqref{eq:2tubes-diffusive-system-1}--\eqref{eq:peretok},~\eqref{eq:IPM-2-tubes-w},~\eqref{eq:gravitational-Darcy-TFE-2tubes}.
    \begin{itemize}
        \item[A)] For every $v < 0$ there exist exactly two traveling wave solutions $c_{1,2}(y-vt)$ satisfying $(c_1,c_2)(-\infty)=(-1,-1)$. They connect the states:
        $$
        (-1,-1) \xrightarrow{TW} (-2v-1,-6v-1) 
        \quad
        \text{and}\quad
        (-1,-1) \xrightarrow{TW} (-6v-1,-2v-1).
        $$

        For $v = 0$ the only stationary 
        solution $c_{1,2}(y)$ satisfying $(c_1,c_2)(-\infty)=(-1,-1)$ is constant $(c_1,c_2)\equiv(-1,-1)$.
        
        For $v > 0$ there are no traveling waves satisfying $(c_{1},c_2)(-\infty)=(-1,-1)$.

        Denote all the possible pairs of states $(c_1,c_2)(+\infty)$ as $\mathcal{H}_{-1}$, see Fig.~\ref{fig:HugoniotTFE}.

        \item[B)] For every $v > 0$ there exist exactly two traveling wave solutions $c_{1,2}(y-vt)$ satisfying $(c_1,c_2)(+\infty)=(1,1)$. They connect the states:
        $$
        (-2v+1,-6v+1) 
        \xrightarrow{TW}
        (1,1)
        \quad
        \text{and}
        \quad
        (-6v+1,-2v+1) \xrightarrow{TW}(1,1).
        $$

        For $v = 0$ the only stationary 
        solution $c_{1,2}(y)$ satisfying $(c_1,c_2)(+\infty)=(1,1)$ is constant $(c_1,c_2)\equiv(1,1)$.

        For $v < 0$ there are no traveling waves satisfying $(c_{1},c_2)(+\infty)=(1,1)$.

        Denote all the possible pairs of states $(c_1,c_2)(-\infty)$ as $\mathcal{H}_{1}$, see Fig.~\ref{fig:HugoniotTFE}.
 \end{itemize}
       
    \end{theorem}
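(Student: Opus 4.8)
The plan is to reduce the traveling-wave problem to an explicitly solvable low-dimensional system, read off the connections, and then upgrade existence to the ``exactly two'' count by a local manifold analysis. First I would pass to the moving frame $\xi=y-vt$ and write the profile equations for $(c_1,c_2)$, using $u_1+u_2=0$ and $u_{1,2}=\mp(c_1-c_2)/2$ from \eqref{eq:gravitational-Darcy-TFE-2tubes}, together with $u_T/l=-\partial_\xi u_1=(c_1'-c_2')/2$. Adding \eqref{eq:2tubes-diffusive-system-1}--\eqref{eq:2tubes-diffusive-system-2} eliminates $f$ and integrates once (using \eqref{eq:infty-cond} at $-\infty$) to the first integral $s'=-\tfrac12(c_1-c_2)^2-v(s+2)$, where $s=c_1+c_2$; this drops the order and yields a $3$-dimensional system. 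The sign of $u_T$, equivalently of $d':=c_1'-c_2'$, splits the dynamics into two branches because of the upwind definition \eqref{eq:peretok}. The key structural observation is that in each branch the interflow term cancels against part of the flux of the upwind tube: for $u_T\le0$ (flow from tube $2$ to tube $1$) the equation for tube $2$ collapses to the linear-in-derivative form $c_2''=(u_2-v)c_2'$ with $u_2=(c_1-c_2)/2$. Linearity in $c_2'$ forbids an interior zero of $c_2'$, so $c_2$ is strictly monotone along any such profile.

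Next I would exhibit the connection explicitly. Since the prescribed endpoints $(-1,-1)$ and $(-2v-1,-6v-1)$ are collinear with slope $3$, I would look for an orbit on the line $c_1=(c_2-2)/3$. Substituting this ansatz into the first integral gives $c_2'=-(c_2+1)\big(\tfrac{c_2+1}{6}+v\big)$, a scalar logistic-type equation, and one checks that it is consistent with $c_2''=(u_2-v)c_2'$ on the line. Its equilibria are exactly $c_2=-1$ and $c_2=-6v-1$; for $v<0$ the right-hand side is positive in between, producing a monotone heteroclinic from $(-1,-1)$ to $(-2v-1,-6v-1)$ that stays in the branch $u_T\le0$. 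The tube-swap symmetry $(c_1,c_2)\mapsto(c_2,c_1)$, which exchanges the two branches, yields the second solution ending at $(-6v-1,-2v-1)$. Part~B then follows from the reflection symmetry $(c_1,c_2)(\xi)\mapsto(-c_1,-c_2)(-\xi)$, $v\mapsto-v$, which maps $(-1,-1)$-connections with $v<0$ onto $(1,1)$-connections with $v>0$ and sends the endpoints to those listed in B.

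To pin down the count I would linearize the $3$-dimensional branch system at the two equilibria. At the source $(-1,-1)$ the eigenvalues are $\{-v,-v,0\}$, so for $v<0$ there is a $2$-dimensional unstable manifold plus a center direction tangent to the curve of equilibria; at the target the eigenvalues are $\{-5v,0,v\}$, so its stable manifold is one-dimensional. Hence at most one orbit can converge to the target along its strong stable direction, and the explicit solution realizes it; this gives uniqueness within the branch, and with the swap exactly two waves. For $v=0$ all eigenvalues degenerate to $0$ and the scalar equation becomes $c_2'=-\tfrac16(c_2+1)^2\le0$ with double equilibrium $c_2=-1$, so no non-constant profile can leave $(-1,-1)$ and return to an equilibrium; for $v>0$ the source has no unstable direction (equivalently $c_2=-1$ is a stable rest point of the scalar equation), ruling out any wave with $(c_1,c_2)(-\infty)=(-1,-1)$.

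The main obstacle is the global part of uniqueness across the non-smooth switching surface $\{u_T=0\}$: a priori a profile could cross $d'=0$ and pass from one branch to the other, so the local manifold count does not by itself exclude extra, branch-switching heteroclinics, and the $2$-dimensional unstable manifold of the source carries a one-parameter family of departing orbits of which only one must remain admissible. I expect this to be the delicate step, and I would handle it by using the strict monotonicity of $c_2$ in the branch $u_T\le0$ and of $c_1$ in $u_T\ge0$ to show that $d=c_1-c_2$ is monotone along any connecting profile, hence $u_T$ keeps a constant sign and no switching occurs; reparametrizing the branch system by the monotone variable $c_2$ reduces it to a planar (non-autonomous) system in $(c_1,c_2')$ on which a shooting/separatrix argument confirms that the explicit line orbit is the only admissible connection.
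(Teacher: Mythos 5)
Your construction of the connecting orbits is correct and in fact coincides with the paper's: your line $c_1=(c_2-2)/3$ is exactly the invariant half-plane $I_4$ of Proposition~\ref{rm:inv-manifolds} (the relation $s=-2r$ integrates to $c_2-3c_1=\mathrm{const}$), your scalar equation $c_2'=-(c_2+1)\bigl(v+\tfrac{c_2+1}{6}\bigr)$ is the parabola~\eqref{eq:parabola-1} in other coordinates, the cancellation $c_2''=(u_2-v)c_2'$ in the branch $u_T\le0$ is identity~\eqref{eq:id-1}, and the swap and reflection symmetries you invoke are legitimate. So the existence half of the theorem is fine and essentially matches the paper.

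The gap is in the ``exactly two'' count and the non-existence claims. The local dimension count at the target does not give uniqueness by itself, because the target is not fixed: it ranges over the whole line of equilibria $\{r=s=0\}$, so the two-dimensional unstable manifold of $(-1,-1)$ could a priori meet the strong stable fibres of an interval of targets. You acknowledge this, but your proposed repair does not close it: within the branch $u_T\le0$ the monotonicity of $d=c_1-c_2$ holds by definition of the branch ($d'=r\le0$ there), and the strict monotonicity of $c_2$ (the upwind tube) gives no control on whether $r=c_1'-c_2'$ can reach zero and switch sign --- in that branch $c_1'$ obeys the \emph{inhomogeneous} equation $c_1''=(u_1-v)c_1'-\tfrac{r}{2}(c_1-c_2)$, so nothing stops $c_1'$ from catching up with $c_2'$. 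The mechanism you are missing is that each branch carries \emph{two} linear combinations of $(c_1',c_2')$ satisfying homogeneous scalar ODEs (for $r<0$ these are $r-s=-2c_2'$, which you found, and $2r+s=3c_1'-c_2'$, which you did not); together with the forward-invariant cones $R_1,\dots,R_4$ in the $(r,s)$-plane, which force $r$ to have eventually constant sign near the equilibrium (Lemma~\ref{lm:sign-r}), and the Gronwall-type Lemma~\ref{lm:S1}, one of the two combinations must vanish identically, pinning \emph{every} connecting orbit onto one of the four explicit half-planes $I_1,\dots,I_4$, where the count is read off from a first integral. The same global input is needed for your $v=0$ and $v>0$ statements, which as written are verified only on the line ansatz or by a linearization that is delicate here because the vector field contains $|r|$ and is not $C^1$ (see the remark after Lemma~\ref{lm:TW-TFE}); the paper replaces the linearization at the source by the direct estimate $\partial_\xi(r^2+s^2)\le(r^2+s^2)\bigl(-2v+\tfrac52|a|\bigr)$ of Lemma~\ref{lm:a0-not-0} and, for $v=0$, by a separate quadrant argument.
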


    To prove Theorem~\ref{thm:TW-TFE}  we put a traveling wave ansatz into the two-tubes TFE model and obtain a family of dynamical systems parametrized by the speed of the traveling wave $v\in\mathbb{R}$. We focus on finding the heteroclinic trajectories of these dynamical systems as they correspond to the traveling wave solutions. In particular, in Section~\ref{sec:TW-TFE-main} we derive a four-dimensional dynamical system, for which we manage to find explicitly two-dimensional invariant manifolds (Proposition~\ref{rm:inv-manifolds}) and exact heteroclinic trajectories inside these manifolds (Lemma~\ref{lm:inside-inv-man}) for all possible values of the parameter $v\in\mathbb{R}$.  Theorem~\ref{thm:TFE-reformulated} states the key result on the existence of the heteroclinic trajectories and implies Theorem~\ref{thm:TW-TFE}. 
    
\smallskip

\begin{minipage}{0.37\textwidth}
\begin{center}
 \includegraphics[width=0.69\textwidth]{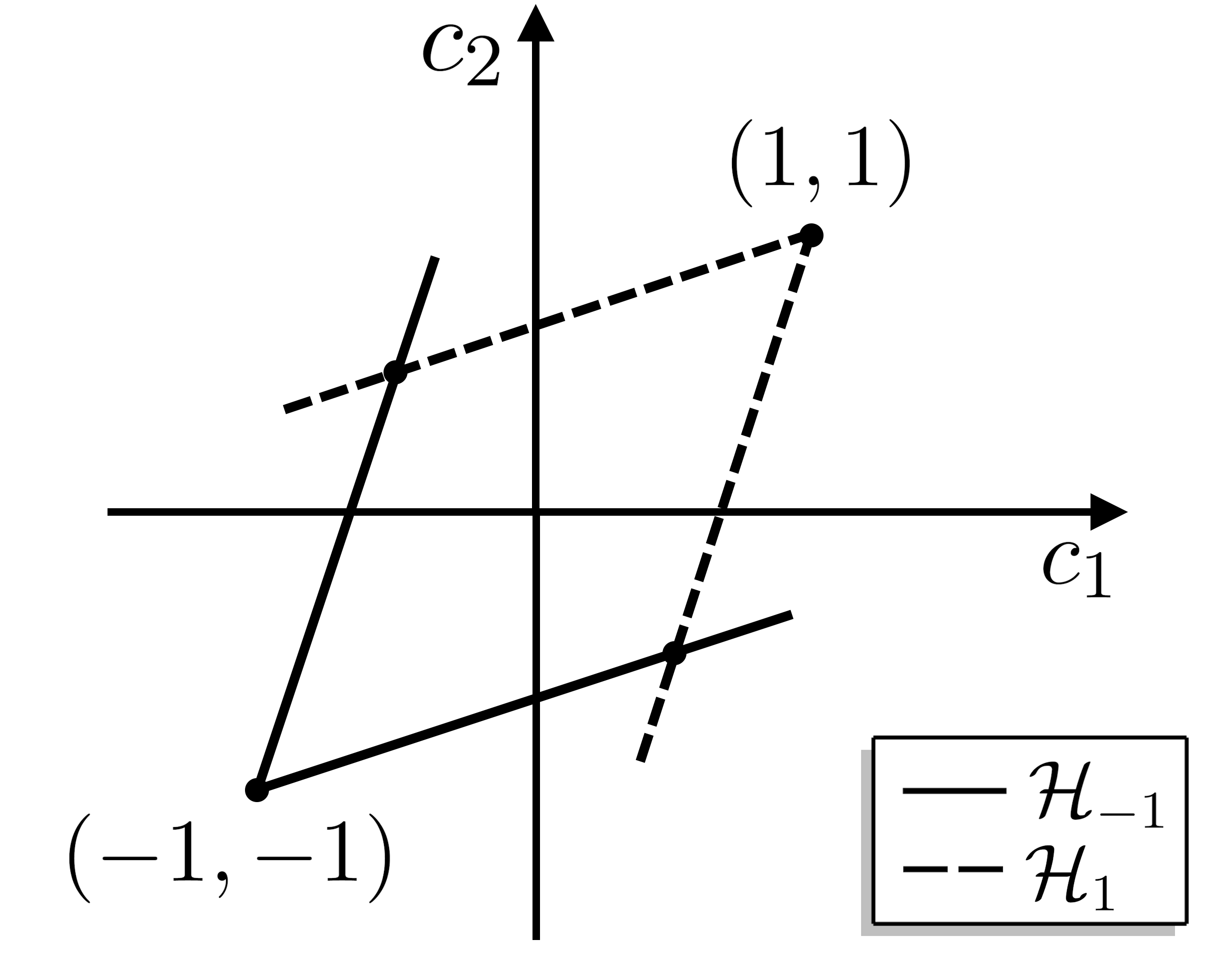}
 \captionof{figure}{Sets $\mathcal{H}_{\pm 1}$ in $(c_1,c_2)$-plane for two-tubes TFE model.} \label{fig:HugoniotTFE}
   
\end{center}
\end{minipage}
\hfill
\begin{minipage}{0.484\textwidth}
 \includegraphics[width=\textwidth]{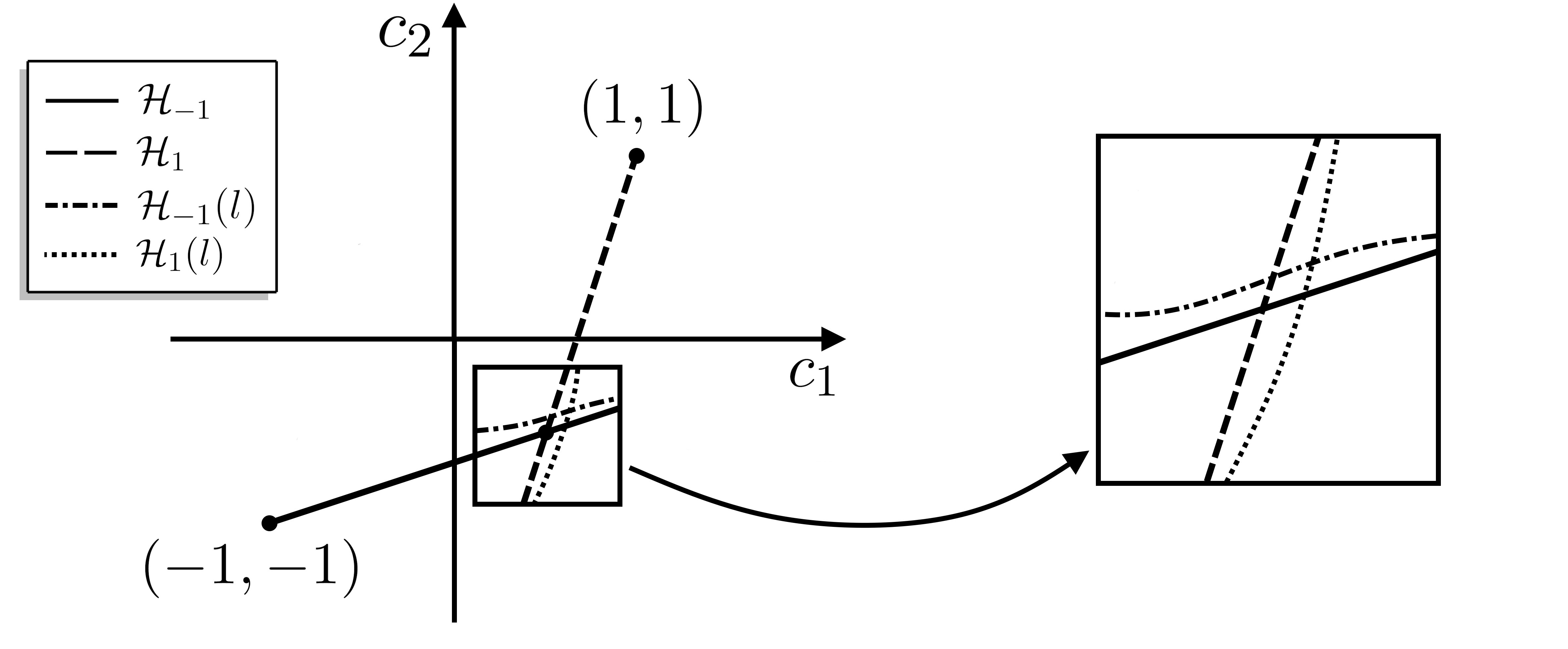}
 \captionof{figure}{Sets $\mathcal{H}_{\pm 1}(l)$ in $(c_1,c_2)$-plane for two-tubes IPM model.}  \label{fig:HugoniotIPM}
\end{minipage}
    
\begin{itemize}
    \item[Step II.] \textit{Propagating terrace.}
\end{itemize}

By Step I, we have a classification of the traveling wave solutions satisfying $(c_1,c_2)(-\infty)=(-1,-1)$ or $(c_1,c_2)(+\infty)=(1,1)$. In order to find propagation terraces, in Section~\ref{subsec:prop-terrace-TFE}, we note that the intersection $\mathcal{H}_1\cap\mathcal{H}_{-1}$ consists of two points $(-\frac12,\frac12)$ and $(\frac12,-\frac12)$, and provides the existence and uniqueness of the propagation terraces from the Theorem~\ref{thm:main-TFE}. See Fig.~\ref{fig:HugoniotTFE}.

 In what follows  we will need a useful observation proved in  Section~\ref{subsec:transverse-TFE}: the heteroclinic orbits that correspond to the traveling wave solutions can be represented as a transversal intersection of some stable and unstable manifolds (thus, persist under small perturbations). This fact is not essential for the two-tubes TFE model, but will be used in the proof of the Theorem~\ref{thm:main-Peaceman} for the two-tubes IPM model.

\subsubsection{Two-tubes IPM model}
We give a detailed analysis of the two-tubes IPM model in Section~\ref{sec:proof-peaceman}. Step I is done in Section~\ref{subsec:Peaceman-TW-main}; step II --- in Section~\ref{subsec:prop-terrace-IPM}. The core of the proof lies in using a perturbation argument, that allows us to transfer the necessary properties from TFE model to IPM model for small enough $l>0$.
\smallskip
\begin{enumerate}
    \item[Step I.] \textit{Traveling wave solutions.} The main result states as follows.
\end{enumerate}
\begin{theorem}
\label{thm:TW-IPM}
Consider the two-tubes IPM equations~\eqref{eq:2tubes-diffusive-system-1}--\eqref{eq:peretok},~\eqref{eq:gravitational-Darcy-2tubes-c},~\eqref{eq:u1+u2=0}. 
\begin{itemize}
\item[A)] There exists  sufficiently small $l_0,\delta >0$ such that for all $v\in(v_1^*-\delta,v_1^*+\delta)$ and  $l\in(0,l_0)$ there exists a traveling wave solution $(c_{1},c_2,u_1,u_2,q)(y-vt)$ satisfying $(c_{1},c_2,u_1,u_2,q)(-\infty)=(-1,-1,0,0,0)$. It connects the states:
\begin{align*}
(-1,-1,0,0,0) \xrightarrow{TW} \left(c_1^*,c_2^*,\frac{c_2^*-c_1^*}2,\frac{c_1^*-c_2^*}2,0\right),
\end{align*}
where $c_1^*=c_1^*(v,l)$ and $c_2^*=c_2^*(v,l)$ depend continuously on $(v,l)$ at any point $(v,l)\in(v_1^*-\delta,v_1^*+\delta)\times(0,l_0)$ and satisfy as $l\to0$ for fixed~$v$
$$
c_{1}^*(v,l)\to -6v-1 \quad \text{ and }\quad c_{2}^*(v,l)\to -2v-1.
$$
Denote all possible pairs of states $(c_1^*(v,l),c_2^*(v,l))$ as $\mathcal{H}_{-1}(l)$, see Fig.~\ref{fig:HugoniotIPM}.

\item[B)] There exists  sufficiently small $l_0,\delta >0$ such that for all $v\in(v_2^*-\delta,v_2^*+\delta)$ and $l\in(0,l_0)$ there exists a traveling wave solution $(c_{1},c_2,u_1,u_2,q)(y-vt)$ 
satisfying  $(c_{1},c_2,u_1,u_2,q)(+\infty)=(1,1,0,0,0)$. It connects the states:
\begin{align*}
 \left(c_1^{**},c_2^{**},\frac{c_2^{**}-c_1^{**}}2,\frac{c_1^{**}-c_2^{**}}2,0\right)\xrightarrow{TW} (1,1,0,0,0),
\end{align*}
where $c_1^{**}=c_1^{**}(v,l)$ and $c_2^{**}=c_2^{**}(v,l)$ depend continuously on $(v,l)$ at any point $(v,l)\in(v_2^*-\delta,v_2^*+\delta)\times(0,l_0)$ and satisfy as $l\to0$ for fixed~$v$
$$
c_{1}^{**}(v,l)\to -2v+1 \quad \text{ and }\quad c_{2}^{**}(v,l)\to -6v+1.
$$
Denote all possible pairs of states $(c_1^{**}(v,l),c_2^{**}(v,l))$ as $\mathcal{H}_{1}(l)$, see Fig.~\ref{fig:HugoniotIPM}.

\end{itemize}
\end{theorem}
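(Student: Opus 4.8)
The plan is to substitute the traveling wave ansatz into the two-tubes IPM equations and to treat the resulting ODE system as a singular perturbation, in the parameter $l$, of the traveling wave system of the TFE model, whose heteroclinic structure is completely described by Theorem~\ref{thm:TW-TFE}. I will prove part A); part B) then follows by the discrete symmetry that swaps the two tubes and reflects $(\xi,c)\mapsto(-\xi,-c)$, $v\mapsto -v$, which sends the $+\infty$ boundary condition to the $-\infty$ one and interchanges the two limiting branches.

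First I would set $\xi = y-vt$ and, using $u_2=-u_1$ from \eqref{eq:u1+u2=0} together with \eqref{eq:gravitational-Darcy-2tubes-c}, rewrite the system as a first-order ODE in $(c_1,c_2,c_1',c_2',u_1,q)$ parametrized by $(v,l)$, encoding the upwind flux \eqref{eq:peretok} through $u_T=-q/l$ and $u_1'=q/l^2$. Differentiating the $u_1$-equation gives the singularly perturbed relation $l^2u_1''-2u_1=c_1-c_2$, whose formal limit $l=0$ is exactly the TFE closure $u_1=(c_2-c_1)/2$ of \eqref{eq:gravitational-Darcy-TFE-2tubes}. Rescaling $q=l\hat q$ and passing to the fast variable $\tau=\xi/l$ puts the system into standard Fenichel fast--slow form with $\varepsilon=l$: the layer subsystem is the linear saddle $\dot u_1=\hat q$, $\dot{\hat q}=2u_1+(c_1-c_2)$ with eigenvalues $\pm\sqrt2$, so the critical manifold $M_0=\{\hat q=0,\ u_1=(c_2-c_1)/2\}$ --- which coincides with the TFE traveling-wave phase space --- is normally hyperbolic of saddle type. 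I would also record that every constant state with $q=0$, $u_1=(c_2-c_1)/2$ is an \emph{exact} equilibrium of the full system for all $l$, so the endpoints of the sought connection are fixed, $l$-independent points lying on $M_0$.

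By Fenichel's geometric singular perturbation theory, for small $l$ there is a locally invariant slow manifold $M_l$ that is $C^1$-$O(l)$-close to $M_0$, and the reduced flow on $M_l$ is an $O(l)$ perturbation of the TFE traveling-wave flow. I would then reformulate the TFE heteroclinic of Theorem~\ref{thm:TW-TFE} --- landing on the branch $(-6v-1,-2v-1)$, the one continuously attached to the terrace speed $v_1^*=-1/4$ --- exactly as in Section~\ref{subsec:transverse-TFE}: summing \eqref{eq:2tubes-diffusive-system-1}--\eqref{eq:2tubes-diffusive-system-2} and integrating yields the first integral $I=-v(c_1+c_2)+u_1(c_1-c_2)-(c_1+c_2)'$, equal to $2v$ along the orbit, and on its level set the heteroclinic is a \emph{transversal} intersection of the unstable manifold of $\sigma_-=(-1,-1,0,0,0)$ with the (center-)stable manifold of the equilibrium manifold. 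Transversality is an open condition and both objects depend $C^1$-continuously on $(v,l)$ through the Fenichel estimates; hence the intersection persists for $l\in(0,l_0)$ and $v$ near $v_1^*$, producing the IPM traveling wave. The landing point is cut out by this intersection, so $(c_1^*(v,l),c_2^*(v,l))$ depends continuously on $(v,l)$ and converges to $(-6v-1,-2v-1)$ as $l\to0$, giving $\mathcal H_{-1}(l)$ as a small perturbation of $\mathcal H_{-1}$.

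The main obstacle is that the endpoints are \emph{not} hyperbolic: the conservation-law structure forces a whole manifold of equilibria, so the naive ``(un)stable manifolds of two saddles'' picture is unavailable. The technical heart is therefore to assemble the correct finite-dimensional invariant manifolds --- combining the reduction by the first integral $I$ (which slices the equilibrium manifold transversally and isolates the endpoints) with Fenichel's fiber theorem --- so that the connection is genuinely a transversal intersection whose persistence under the $C^1$-small perturbation of the reduced flow is automatic. A secondary point requiring care is the non-smoothness of the upwind flux \eqref{eq:peretok} across $u_T=0$: I would verify that along the relevant orbit $u_T$ (equivalently $q$) keeps a definite sign away from the switching surface, so that the vector field is smooth on the region where the perturbation argument is invoked.
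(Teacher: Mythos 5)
Your overall architecture coincides with the paper's: a Fenichel slow--fast reduction with the TFE traveling-wave system as the reduced limit, persistence of the transversal intersection constructed in Section~\ref{subsec:transverse-TFE}, continuous dependence of the landing point, and an a posteriori sign check on $q$ to handle the upwind nonsmoothness. However, there are concrete gaps in the execution. The most important one: the coordinates $(c_1,c_2,c_1',c_2',u_1,q/l)$ do \emph{not} put the system into standard Fenichel form. Since $f=(u_T/l)\,c_i$ and $\partial_\xi(u_ic_i)$ both contain $q/l^2=\hat q/l$, the equations for $r=\partial_\xi(c_1-c_2)$ and $s=\partial_\xi(c_1+c_2)$ retain $O(1/l)$ terms $a|\hat q|/l$ and $a\hat q/l$ (see~\eqref{eq:6dim-dyn-sys-Peaceman}); hence $c_1',c_2'$ are not slow variables and the layer problem is not just the $2\times2$ saddle in $(u_1,\hat q)$. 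The paper removes this singularity with the additional shift $r_1=r-u_1a-a^2/2$, $s_1=s-u_1a-a^2/2$, valid on a fixed sign regime of $q$, after which the $1/l$ terms cancel and \eqref{eq:6dim-dyn-sys-Peaceman-ver2} is genuinely in standard form. Without some such desingularization your invocation of Fenichel's theorem does not apply as stated.

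Two further points. The first integral $I$ is a correct observation, but it does not ``isolate the endpoints'': its level set still meets the two-dimensional equilibrium manifold in a curve, so you are left with a continuum of candidate limits at the non-fixed end. The paper instead intersects the two-dimensional strong stable manifold of the fixed endpoint with the three-dimensional center-unstable manifold $W^u_{loc}(S^*)$ of a whole disc of equilibria (transversality being checked via the explicit invariant hyperplane $\{s+2r=0\}$ of Proposition~\ref{rm:inv-manifolds}), and then identifies $(c_1^*,c_2^*)$ through the continuity of the unstable \emph{lamination} (the base-point map $P$ in Step~3 of the proof of Theorem~\ref{thm:Peaceman-reformulated}); some equivalent device is needed in your plan. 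Finally, ``$q$ keeps a definite sign away from the switching surface'' misstates the difficulty: the orbit limits onto $\{q=0\}$ at both ends, so one must prove $q\ge 0$ along the \emph{entire} orbit for the constructed trajectory to solve the original upwind system. The paper's Step~4 does this in two stages (first $r_1\le 0$ in three separate regimes along the orbit, then a minimum-principle argument based on the identity $q_1=-\tfrac12 l r_1-\tfrac14 l^2 a\dot q_1+\tfrac12 l^2\ddot q_1$); this is a substantive argument, not a routine verification.
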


    To prove Theorem~\ref{thm:TW-IPM}  we put a traveling wave ansatz into the two-tubes IPM model and obtain a family of dynamical systems parametrized by the speed of the traveling wave $v\in\mathbb{R}$. We focus on finding the heteroclinic trajectories of these dynamical systems as they correspond to the traveling wave solutions. In particular, in Section~\ref{subsec:Peaceman-TW-main} we derive the traveling wave dynamical system for the two-tubes IPM. It is six-dimensional  and  has a slow-fast structure for $l>0$ sufficiently small. This allows to reduce the dimension of the dynamical system from six to four by using the geometric singular perturbation theory (GSPT). The formal  limiting system at $l=0$ coincides with the traveling wave dynamical system for the two-tubes TFE case for which we know the heteroclinic trajectories explicitly. Using transversality  argument obtained in Section \ref{subsec:transverse-TFE} we conclude that these orbits persist under small perturbations of the system. Theorem~\ref{thm:Peaceman-reformulated} states the key result on the existence of the heteroclinic trajectories and implies Theorem~\ref{thm:TW-IPM}.

\smallskip 
\begin{enumerate}
    \item[Step II.] \textit{Propagating terrace.}
\end{enumerate}

    By Step I, we established the existence of the traveling wave solutions satisfying $(c_1,c_2,u_1,u_2,q)(-\infty)=(-1,-1,0,0,0)$ or $(c_1,c_2,u_1,u_2,q)(+\infty)=(1,1,0,0,0)$.
    In order to prove Theorem~\ref{thm:main-Peaceman}, we show that the curves $\mathcal{H}_1(l)$ and $\mathcal{H}_{-1}(l)$ intersect. Indeed, Theorem~\ref{thm:TW-IPM} implies that the curves $\mathcal{H}_1(l)$ and $\mathcal{H}_{-1}(l)$ are small perturbations of the corresponding curves for TFE model in a small neighbourhood of the intersection point, see Fig.~\ref{fig:HugoniotIPM}. The intersection of $\mathcal{H}_1(l)$ and $\mathcal{H}_{-1}(l)$ is obtained from topological transversality of the intersection of $\mathcal{H}_1$ and $\mathcal{H}_{-1}$.

\section{Analysis of the two-tubes TFE model}
\label{sec:proof-thm2}
In this section we analyze the two-tubes TFE model~\eqref{eq:2tubes-diffusive-system-1}--\eqref{eq:peretok},~\eqref{eq:IPM-2-tubes-w},~\eqref{eq:gravitational-Darcy-TFE-2tubes}. First, in Section~\ref{sec:TW-TFE-main} we study traveling wave solutions and prove Theorem~\ref{thm:TW-TFE}. Second, in Section~\ref{subsec:prop-terrace-TFE} we prove Theorem~\ref{thm:main-TFE} on the existence and uniqueness of the propagating terrace. Third, in Section~\ref{subsec:transverse-TFE} we prove the auxiliary result that the heteroclinic orbits that correspond to traveling wave solutions can be represented as a transverse intersection of some stable and unstable manifolds.

\subsection{Traveling wave solutions}
\label{sec:TW-TFE-main}

\subsubsection{Traveling wave dynamical system. Derivation}
\label{sec:TW-TFE}

We are looking for a traveling wave solution $c_{1,2}(t,y)=\tilde{c}_{1,2}(y-vt)$
 of the system \eqref{eq:2tubes-diffusive-system-1}--\eqref{eq:peretok},~\eqref{eq:IPM-2-tubes-w},~\eqref{eq:gravitational-Darcy-TFE-2tubes}, connecting the states $(c_1^-, c_2^-)$ and $(c_1^+, c_2^+)$  
 (for simplicity we omit tildas)
\begin{align*}
 &c_{1,2}(t,y) = c_{1,2}(\xi),
& &\xi  = y - vt, & & c_{1,2}(\pm\infty) = c_{1,2}^\pm.
\end{align*}
In the above notation the system~\eqref{eq:2tubes-diffusive-system-1}--\eqref{eq:2tubes-diffusive-system-2} can be rewritten as
\begin{align}
\label{eq:dynsys-1}
    -v\partial_\xi c_1&=-\partial_\xi(u_1c_1)-f+\partial_{\xi\xi}c_1,
    \\
    \label{eq:dynsys-2}
    -v\partial_\xi c_2&=-\partial_\xi(u_2c_2)+f+\partial_{\xi\xi}c_2.
\end{align}
Let us rewrite these two second order ODEs as a system of four first order ODEs  using new unknown variables $(a,b,r,s)$:
\begin{equation}\label{eq:new-variables}
    a:=c_1-c_2;\quad b:=c_1+c_2; \quad r:=\partial_\xi(c_1-c_2); \quad s:=\partial_\xi (c_1+c_2).
\end{equation}
\begin{lemma}
\label{lm:TW-TFE}
The traveling wave dynamical system for the equations~\eqref{eq:2tubes-diffusive-system-1}--\eqref{eq:peretok}, \eqref{eq:IPM-2-tubes-w}, \eqref{eq:gravitational-Darcy-TFE-2tubes} takes the following form $\mathrm{(}$in variables~\eqref{eq:new-variables}$\mathrm{)}$:
\begin{align}
\label{eq:4dim-dyn-sys-TFE}
    \begin{split}
        \partial_{\xi}a&=r,
        \\
        \partial_\xi b&=s,
        \\
        \partial_\xi r&=-vr-\frac{sa}{2}+\frac{a|r|}{2},
        \\
        \partial_\xi s&=-vs-ra.
    \end{split}
\end{align}
\end{lemma}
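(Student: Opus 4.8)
The plan is to turn the two second-order traveling-wave equations \eqref{eq:dynsys-1}--\eqref{eq:dynsys-2} into the four-dimensional first-order system \eqref{eq:4dim-dyn-sys-TFE} by (i) expressing all velocities through the new variables \eqref{eq:new-variables}, (ii) rewriting the upwind flux $f$ as a single closed-form expression, and (iii) passing to the sum/difference combinations of the two tube equations. First I would use the incompressibility relation \eqref{eq:IPM-2-tubes-w}, which in the traveling-wave coordinate reads $u_T/l=-\partial_\xi u_1$, together with the TFE constitutive law \eqref{eq:gravitational-Darcy-TFE-2tubes}, giving $u_1=(c_2-c_1)/2=-a/2$ and $u_2=a/2$. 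Since $\partial_\xi u_1=-r/2$, this yields $u_T/l=r/2$, so that the sign of $u_T$ coincides with the sign of $r$.

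Next I would rewrite the flux. Because $\operatorname{sign}(u_T)=\operatorname{sign}(r)$ and $c_1=(b+a)/2$, $c_2=(b-a)/2$, the two branches of \eqref{eq:peretok} collapse into the single formula $f=\tfrac14\,(rb+|r|a)$; one checks directly that for $r\ge 0$ this equals $(r/2)c_1$ and for $r\le 0$ it equals $(r/2)c_2$, matching \eqref{eq:peretok}. This absolute-value form is exactly what produces the $a|r|/2$ term in the third equation of \eqref{eq:4dim-dyn-sys-TFE}.

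I would then take the difference and the sum of \eqref{eq:dynsys-1} and \eqref{eq:dynsys-2}. Using $u_2=-u_1=a/2$ one computes the advective fluxes $u_1c_1-u_2c_2=-ab/2$ and $u_1c_1+u_2c_2=-a^2/2$. The difference equation becomes $-vr=\tfrac12\partial_\xi(ab)-2f+\partial_\xi r$; expanding $\tfrac12\partial_\xi(ab)=\tfrac12(rb+as)$, the advective contribution $rb/2$ cancels against the $rb/2$ coming from $2f$, leaving $\partial_\xi r=-vr-\tfrac{sa}{2}+\tfrac{a|r|}{2}$. The sum equation gives $\partial_\xi s=-vs-\tfrac12\partial_\xi(a^2)=-vs-ar$. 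Together with the definitional relations $\partial_\xi a=r$ and $\partial_\xi b=s$ from \eqref{eq:new-variables}, this is precisely \eqref{eq:4dim-dyn-sys-TFE}.

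The only genuinely delicate point is the piecewise, upwind definition of $f$: the derivation hinges on the observation that its switching is governed entirely by $r$ (through $u_T$), which is what allows the two cases to be merged into one expression, smooth away from $\{r=0\}$, involving $|r|$. After that identification, the remainder is routine bookkeeping of the cancellations described above, and no further estimates are needed.
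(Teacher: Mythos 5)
Your proposal is correct and follows essentially the same route as the paper: substitute the traveling-wave ansatz, pass to the sum/difference variables \eqref{eq:new-variables}, and use $u_T/l=-\partial_\xi u_1=r/2$ to resolve the upwind switching of $f$ into an $|r|$ term. The only cosmetic difference is that you package $f$ as the single expression $\tfrac14(rb+|r|a)$ before summing/subtracting, whereas the paper simplifies the combination $-\partial_\xi u_1\cdot(c_1+c_2)-2f$ casewise; the resulting algebra is identical.
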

\begin{remark}
    Note that right hand side of system \eqref{eq:4dim-dyn-sys-TFE} is locally Lipschitz and hence it satisfies existence and uniqueness conditions. At the same time it is not differentiable (due to the presence of $|r|$) and hence invariant manifold theory cannot be directly applied. The analysis of \eqref{eq:4dim-dyn-sys-TFE} below is similar to description of stable and unstable manifolds at the same time it is adapted to nonsmoothness of the system.
\end{remark}

\begin{proof}
Clearly, $\partial_\xi a=r$ and $\partial_\xi b=s$. Let's show how to obtain the equation on~$\partial_\xi s$. Summing up the equations~\eqref{eq:dynsys-1}--\eqref{eq:dynsys-2}, we get:
\begin{align*}
    -v\partial_\xi (c_1+c_2)&=-\partial_\xi(u_1c_1+u_2c_2)+\partial_{\xi\xi}(c_1+c_2).
\end{align*}
Using the expressions for $u_1$ and $u_2$, see relation~\eqref{eq:gravitational-Darcy-TFE-2tubes}, we obtain
\begin{align*}
    -v\partial_\xi (c_1+c_2)&=-\partial_\xi (c_2-c_1)\cdot (c_1-c_2)+\partial_{\xi\xi}(c_1+c_2).
\end{align*}
Substituting the new variables~\eqref{eq:new-variables}, we get the necessary equation on $\partial_\xi s$.

Now let's show how to obtain the equation on 
$\partial_\xi r$. Subtracting the equation~\eqref{eq:dynsys-2} from~\eqref{eq:dynsys-1}, we get:
\begin{align}
    -v\partial_\xi (c_1-c_2)&=-\partial_\xi(u_1c_1-u_2c_2)+\partial_{\xi\xi}(c_1-c_2)-2f,
    \nonumber
    \\
    \label{eq:4-eq}
    -v\partial_\xi (c_1-c_2)&=-\partial_\xi u_1\cdot (c_1+c_2)-u_1\cdot \partial_\xi (c_1+c_2)+\partial_{\xi\xi}(c_1-c_2)-2f.
\end{align}
The sum $(-\partial_\xi u_1\cdot (c_1+c_2)-2f)$ can be simplified:
\begin{align*}
    -\partial_\xi u_1\cdot (c_1+c_2)-2f&=
    \begin{cases}
        -\partial_\xi u_1 \cdot (c_2-c_1),& \partial_\xi u_1<0,\\
    -\partial_\xi u_1 \cdot (c_1-c_2),&\partial_\xi u_1\geq 0,
    \end{cases}
    =-\frac{a|r|}{2}.
\end{align*}
Substituting variables~\eqref{eq:new-variables} into equation~\eqref{eq:4-eq}, we get the necessary equation on~$\partial_\xi r$.
\end{proof}

\subsubsection{Traveling wave dynamical system. Analysis}
In this section we make preliminary analysis of the traveling wave dynamical system  and introduce invariant manifolds important for the proof of Theorem~\ref{thm:main-TFE}.

The set of fixed points for the system~\eqref{eq:4dim-dyn-sys-TFE} for any $v\in\mathbb{R}$  coincides with the set
\begin{align}
\label{eq:fixed-points-4-dim}
    M:=\{(a,b,r,s)\in\mathbb{R}^4: r=s=0\}.
\end{align}

Notice that the right hand side of the system~\eqref{eq:4dim-dyn-sys-TFE} does not depend on $b$, thus, we can first analyze the three-dimensional system:
\begin{align}
\label{eq:3dim-dyn-sys-TFE}
    \begin{split}
        \partial_{\xi}a&=r,
        \\
        \partial_\xi r&=-vr-\frac{sa}{2}+\frac{a|r|}{2},
        \\
        \partial_\xi s&=-vs-ra.
    \end{split}
\end{align}
By a straightforward computation one can check that for $r>0$ 
\begin{align}
\label{eq:id-1}
    \partial_\xi(r+s)& = \left(-v-\frac{a}{2}\right)(r+s);
    \\
    \label{eq:id-2}
    \partial_\xi(2r-s)& = (-v+a)(2r-s).
\end{align}
Similarly, for $r<0$
\begin{align*}
    \partial_\xi(r-s)& = \left(-v+\frac{a}{2}\right)(r-s);
    \\
    \partial_\xi(2r+s)& = (-v-a)(2r+s).
\end{align*}

As a consequence we obtain the following statement.
\begin{proposition}
    \label{rm:inv-manifolds}
The following two-dimensional manifolds are invariant for~\eqref{eq:3dim-dyn-sys-TFE} 
\begin{align*}
    I_1&=\{(a,r,s)\in\mathbb{R}^3:\;r>0\;\text{ and }\;s=2r\};
    \\
    I_2&=\{(a,r,s)\in\mathbb{R}^3:\;r>0\;\text{ and }\;s=-r\};
    \\
    I_3&=\{(a,r,s)\in\mathbb{R}^3:\;r<0\;\text{ and }\;s=r\};
    \\
    I_4&=\{(a,r,s)\in\mathbb{R}^3:\;r<0\;\text{ and }\;s=-2r\}.
\end{align*}
By uniqueness of solutions, the following regions are also invariant for \eqref{eq:3dim-dyn-sys-TFE}
\begin{align*}
    R_1&=\{(a,r,s)\in\mathbb{R}^3:s\geq -2r\;\text{ and }\;s\geq2r\};
    \\
    R_2&=\{(a,r,s)\in\mathbb{R}^3:\;s\leq 2r\;\text{ and }\;s\geq -r\};
    \\
    R_3&=\{(a,r,s)\in\mathbb{R}^3:\;s\leq-r\;\text{ and }\;s\leq r\};
    \\
    R_4&=\{(a,r,s)\in\mathbb{R}^3:\;s\geq r\;\text{ and }\;s\leq -2r\}.
\end{align*}
\end{proposition}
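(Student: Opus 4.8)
The plan is to obtain the invariance of the manifolds $I_1,\dots,I_4$ directly from the scalar identities \eqref{eq:id-1}, \eqref{eq:id-2} and their $r<0$ counterparts, and then to deduce the invariance of the regions $R_1,\dots,R_4$ purely from uniqueness of solutions.

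First I would treat the manifolds. Each of the four combinations appearing in the identities --- namely $2r-s$ and $r+s$ (for $r>0$), and $r-s$ and $2r+s$ (for $r<0$) --- satisfies along trajectories a homogeneous linear scalar equation $\partial_\xi w=\lambda(\xi)\,w$, where $\lambda(\xi)$ is a continuous function of $a(\xi)$. Hence its zero set is flow-invariant: if $w$ vanishes at one value of $\xi$ it vanishes for all $\xi$. Matching each manifold to the correct combination then gives $I_1=\{2r-s=0\}\cap\{r>0\}$ from \eqref{eq:id-2}, $I_2=\{r+s=0\}\cap\{r>0\}$ from \eqref{eq:id-1}, and $I_3=\{r-s=0\}\cap\{r<0\}$, $I_4=\{2r+s=0\}\cap\{r<0\}$ from the two $r<0$ identities. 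It remains to check that a trajectory starting on $I_j$ cannot leave its half-space $\{r>0\}$ or $\{r<0\}$ in finite $\xi$. On $I_1$, for instance, substituting $s=2r$ into \eqref{eq:3dim-dyn-sys-TFE} yields $\partial_\xi r=-(v+a/2)\,r$, so $r(\xi)=r_0\exp(-\int(v+a/2)\,d\xi)$ remains strictly positive for all finite $\xi$; the analogous computation works on the other three manifolds. Thus each $I_j$ is genuinely invariant, and its boundary $\{r=0\}$ (the fixed-point set $M$ of \eqref{eq:fixed-points-4-dim}) can only be approached asymptotically as $\xi\to\pm\infty$.

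Second I would turn to the regions. The geometry is that, in each fiber $\{a=\mathrm{const}\}$, the four rays $I_1,\dots,I_4$ emanate from the origin of the $(r,s)$-plane and cut it into four closed wedges, which are exactly $R_1,\dots,R_4$; inspection of the defining inequalities shows that each $R_k$ is bounded by precisely two of the rays (e.g.\ $R_1$ by $I_1$ and $I_4$, $R_2$ by $I_1$ and $I_2$, and so on), the two edges meeting along the vertex line $M$. Since the right-hand side of \eqref{eq:3dim-dyn-sys-TFE} is locally Lipschitz, solutions are unique, so a trajectory starting in the interior of $R_k$ can escape only by first touching $\partial R_k$. Touching a bounding ray $I_i$ at a point with $r\neq0$ is impossible: by invariance of $I_i$ and uniqueness the whole trajectory would then lie on $I_i$, contradicting that it started in the interior. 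Touching the vertex $M$ in finite $\xi$ is also impossible, since a non-constant solution cannot reach a fixed point in finite time, again by uniqueness. Hence no interior trajectory leaves $R_k$, and the closed regions are invariant.

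The main obstacle is the non-smoothness of the vector field along $\{r=0\}$ flagged in the Remark after Lemma \ref{lm:TW-TFE}: the regions $R_1$ and $R_3$ contain a segment of this non-smooth locus (a piece of the $s$-axis) in their \emph{interior}, so smooth invariant-manifold machinery cannot be applied there. The point that rescues the argument is that the bounding manifolds $I_i$ all lie strictly inside the smooth half-spaces $\{r>0\}$ or $\{r<0\}$; the only interaction with $\{r=0\}$ occurs in the interior of a region, where crossing it merely continues a unique solution and cannot produce an exit. Everything else reduces to two elementary facts --- that the zero set of a homogeneous linear scalar ODE is flow-invariant, and that trajectories of a system with unique solutions cannot cross one another --- so the remaining care is only in the sign bookkeeping that matches each $I_j$ and each $R_k$ to the correct identity and inequalities.
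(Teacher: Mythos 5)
Your proposal is correct and follows the same route the paper intends: the four half-planes are the zero sets of the quantities $2r-s$, $r+s$, $r-s$, $2r+s$, each of which satisfies a homogeneous linear scalar ODE by \eqref{eq:id-1}--\eqref{eq:id-2} and their $r<0$ analogues (so cannot vanish at one $\xi$ without vanishing identically), and the wedges $R_k$ bounded by these half-planes and the fixed-point line $M$ are then invariant by uniqueness of solutions. The paper states this as an immediate consequence without writing out the details; your additional checks (that $r$ cannot change sign in finite $\xi$ on each $I_j$, and that an interior trajectory can reach neither a bounding ray nor $M$ in finite time) are exactly the bookkeeping the paper leaves implicit.
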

In what follows we will show that heteroclinic trajectories corresponding to traveling waves belong to the invariant manifolds $I_1$--$I_4$ (see Theorem~\ref{thm:TFE-reformulated} below).

\subsubsection{Traveling wave dynamical system. Heteroclinic trajectories}
\label{subsec:heteroclinic-orbits}
We are looking for all values of $a_0, b_0, v\in\mathbb{R}$ such that there exist a heteroclinic trajectory $(a(\xi), b(\xi),r(\xi),s(\xi))$ of the dynamical system~\eqref{eq:4dim-dyn-sys-TFE} that connects either a pair of critical points
\begin{align}
\label{eq:hetero-11}
    (a,b,r,s)(-\infty)=(a_0,b_0,0,0) \quad\text{and}\quad  (a,b,r,s)(+\infty)=(0,2,0,0),
\end{align}
 or points
\begin{align}
\label{eq:hetero--1-1}
    (a,b,r,s)(-\infty)=(0,-2,0,0) \quad\text{and}\quad  (a,b,r,s)(+\infty)=(a_0,b_0,0,0).
\end{align}

In order to prove Theorem~\ref{thm:TW-TFE}, we provide its reformulation in terms of heteroclinic trajectories for the traveling wave dynamical system~\eqref{eq:4dim-dyn-sys-TFE}. 

\begin{theorem}
    \label{thm:TFE-reformulated}
    The following statements are valid:
   
    {\bf Item 1.} For any $v>0$ there exist exactly two values of the pair $(a_0,b_0)$ such that there exists a heteroclinic trajectory of the system
    ~\eqref{eq:4dim-dyn-sys-TFE}, satisfying~\eqref{eq:hetero-11}.      Namely,
    \begin{itemize}
        \item $(a_0,b_0)=(+4v,-8v+2)$ and $r(\xi)<0$, $\xi\in\mathbb{R}$, on this trajectory;
        \item $(a_0,b_0)=(-4v,-8v+2)$ and $r(\xi)>0$, $\xi\in\mathbb{R}$, on this trajectory.
    \end{itemize}
    
    Also for any $v>0$ there does not exist any heteroclinic trajectory of the system~\eqref{eq:4dim-dyn-sys-TFE}, satisfying~\eqref{eq:hetero--1-1}. 

    {\bf Item 2.} For $v= 0$ the system \eqref{eq:4dim-dyn-sys-TFE} does not have a heteroclinic trajectory that satisfies 
    either $(a,b,r,s)(-\infty)=(0, b_0, 0, 0)$ or $(a,b,r,s)(+\infty)=(0, b_0, 0, 0)$ for any $b_0\in\mathbb{R}$.

    {\bf Item 3.} For any $v<0$ there exist exactly two values of the pair $(a_0,b_0)$ such that there exists a heteroclinic trajectory of the system~\eqref{eq:4dim-dyn-sys-TFE}, satisfying~\eqref{eq:hetero--1-1}. Namely,
    \begin{itemize}
        \item $(a_0,b_0)=(+4v,-8v-2)$ and $r(\xi)<0$, $\xi\in\mathbb{R}$, on this trajectory;
        \item $(a_0,b_0)=(-4v,-8v-2)$ and $r(\xi)>0$, $\xi\in\mathbb{R}$, on this trajectory.
    \end{itemize}
     Also for any $v<0$ there does not exist any heteroclinic trajectory of the system~\eqref{eq:4dim-dyn-sys-TFE}, satisfying~\eqref{eq:hetero-11}.

\end{theorem}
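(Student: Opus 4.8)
The plan is to work with the three-dimensional system~\eqref{eq:3dim-dyn-sys-TFE}, since the $b$-equation decouples and $b$ is recovered by quadrature: along any orbit $b(+\infty)-b(-\infty)=\int_{\mathbb{R}}s\,d\xi$, and because every candidate orbit will turn out to lie on one of the manifolds $I_1$--$I_4$ where $s$ is a fixed multiple of $r$, this integral equals the same multiple of $\int_{\mathbb{R}}r\,d\xi=\int_{\mathbb{R}}\partial_\xi a\,d\xi=a(+\infty)-a(-\infty)$. Thus once the endpoints $a_0$ and the hosting manifold are known, $b_0$ is determined; this is how I will read off $b_0=-8v+2$ (resp. $-8v-2$) at the end. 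A heteroclinic orbit connects two equilibria on the $a$-axis $\{r=s=0\}$, so the whole problem reduces to classifying connecting orbits of~\eqref{eq:3dim-dyn-sys-TFE} between points of this line.

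First I would analyze the dynamics on the invariant manifolds of Proposition~\ref{rm:inv-manifolds}. On each $I_j$ the relation $s=\kappa r$ (with $\kappa\in\{2,-1,1,-2\}$) turns~\eqref{eq:3dim-dyn-sys-TFE} into a planar system $\partial_\xi a=r$, $\partial_\xi r=\lambda(a)\,r$ with $\lambda$ affine in $a$; dividing off the equilibria gives $dr/da=\lambda(a)$, which integrates to an explicit parabola $r=r(a)$ through the equilibria. On $I_1$ this yields $r=-va-a^2/4$, with simple zeros at $a=0$ and $a=-4v$ and $r>0$ strictly between them; on $I_4$ one gets $r=a^2/4-va$, zeros at $a=0$, $a=4v$, and $r<0$ in between. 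Reading off the sign of $r$ (hence the monotone direction of $a$) shows each of $I_1$ and $I_4$ carries \emph{exactly one} connecting orbit, of type~\eqref{eq:hetero-11} when $v>0$ and of type~\eqref{eq:hetero--1-1} when $v<0$, with $a_0=-4v$ on $I_1$ and $a_0=+4v$ on $I_4$. By contrast, on $I_2$ and $I_3$ the analogous parabolas are negative (resp. positive) strictly between their roots, contradicting the sign constraint $r>0$ (resp. $r<0$) defining the manifold, so $I_2,I_3$ carry no connecting orbit. This already produces the two orbits of Items~1 and~3 and rules out the wrong-type connections on the manifolds.

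The heart of the argument, and the step I expect to be the main obstacle, is to show that \emph{every} heteroclinic orbit is confined to $I_1$--$I_4$, so that no further connections hide in the interiors of the invariant regions $R_1$--$R_4$. For this I would build a conserved quantity from identities~\eqref{eq:id-1}--\eqref{eq:id-2}: in $\{r>0\}$ one computes $\frac{d}{d\xi}\big[2\ln|r+s|+\ln|2r-s|\big]=2(-v-\tfrac{a}{2})+(-v+a)=-3v$, so $(r+s)^2|2r-s|=Ce^{-3v\xi}$; the $r<0$ analogue gives $(r-s)^2|2r+s|=Ce^{-3v\xi}$. The delicate point is the non-smoothness at $r=0$: I would verify that the piecewise function $\Phi(r,s)$ equal to these two expressions is \emph{continuous} across $\{r=0\}$ (both sides reduce to $|s|^3$ there), so that a \emph{single} constant $C$ governs the orbit even when it crosses $r=0$. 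Since both endpoints are equilibria, $\Phi\to0$ as $\xi\to\pm\infty$; but $\Phi=Ce^{-3v\xi}$ blows up at one end unless $C=0$ (at $-\infty$ if $v>0$, at $+\infty$ if $v<0$, identically if $v=0$). Hence $C=0$, forcing $(r+s)(2r-s)\equiv0$ where $r\ge0$ and $(r-s)(2r+s)\equiv0$ where $r\le0$; as these loci are pairwise disjoint away from the origin $\{r=s=0\}$ and an orbit on, say, $I_1$ can reach $r=0$ only at an equilibrium, the connected orbit lies entirely on a single $I_j$, as claimed.

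Finally I would assemble the pieces. For $v\neq0$ exactly the two orbits on $I_1,I_4$ survive, of the single type dictated by the sign of $v$, giving the ``exactly two, none of the other type'' statements of Items~1 and~3; the values $b_0=-8v+2$ and $b_0=-8v-2$ follow from the quadrature of the first paragraph. For $v=0$ (Item~2) the forcing $C=0$ again puts any orbit on some $I_j$, but there the parabolas degenerate to $r=\mp a^2/4$, whose sign violates the defining constraint of $r$ except at $a=0$, leaving only the constant solution; thus no heteroclinic exists. Theorem~\ref{thm:TFE-reformulated} then follows, and with it Theorem~\ref{thm:TW-TFE}.
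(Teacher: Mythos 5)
Your overall architecture is sound and, at the endgame, identical to the paper's: the explicit parabolas on $I_1$--$I_4$, the resulting values $a_0=\mp 4v$, the exclusion of $I_2,I_3$, and the recovery of $b_0$ from $s=\pm 2r$ by quadrature reproduce Lemma~\ref{lm:inside-inv-man} and the concluding computation of the paper's proof; your treatment of $v=0$ via the degenerate parabolas also reaches the right conclusion by a route different from the paper's quadrant-invariance argument. Where you genuinely diverge is the confinement step (every heteroclinic lies on some $I_j$). The paper proves first that $a$ and then $r$ have constant sign near the relevant infinity (Lemmas~\ref{lm:a0-not-0} and~\ref{lm:sign-r}, using the invariant regions of Proposition~\ref{rm:inv-manifolds}), and then applies the decay Lemma~\ref{lm:S1} to one of the quantities $r\pm s$, $2r\mp s$ via~\eqref{eq:id-1}--\eqref{eq:id-2}. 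You instead package~\eqref{eq:id-1}--\eqref{eq:id-2} into a single quantity $\Phi$ with $\Phi'=-3v\Phi$ on each half-space $\{\pm r>0\}$ and force $C=0$ from decay at one infinity; the computation $2(-v-\tfrac a2)+(-v+a)=-3v$ is correct and this is an attractive shortcut.

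However, the step you yourself flag as delicate is where the gap sits, and continuity of $\Phi$ across $\{r=0\}$ does not close it. Knowing that $g(\xi):=\Phi(\xi)e^{3v\xi}$ is continuous and locally constant on the open set $\{\xi:\,r(\xi)\ne0\}$ does not imply $g$ is constant: before confinement is established you cannot exclude that $\{\xi:\,r(\xi)=0\}$ is a complicated closed set across which the ``constant'' drifts (the Cantor-function phenomenon), so ``a single constant $C$ governs the orbit'' is precisely what needs proof. The repair is to show that along a nontrivial heteroclinic every zero of $r$ is simple, hence isolated: at a zero with $r'=0$ the $r$-equation of~\eqref{eq:3dim-dyn-sys-TFE} gives $sa=0$; if $s=0$ the orbit sits at an equilibrium and is trivial by uniqueness, while if $a=0$, $s\ne0$ the orbit is trapped on the invariant line $\{a=r=0\}$ (where $s'=-vs$), which carries no heteroclinic. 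With that, $\Phi'=-3v\Phi$ holds globally and your argument goes through --- but note this missing lemma is essentially the same work as the paper's Lemma~\ref{lm:sign-r}, so it cannot be waved away. Without it, your proof of the central claim that no connections hide in the interiors of $R_1$--$R_4$ is incomplete.
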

    Using formulas~\eqref{eq:new-variables}, $c_1=(a+b)/2$, $c_2=(b-a)/2$,  one immediately gets that Theorem~\ref{thm:TW-TFE} is a consequence of Theorem~\ref{thm:TFE-reformulated}. Let us prove Theorem~\ref{thm:TFE-reformulated}.

\begin{proof}
{\bf Item 1.}  Consider a trajectory $(a(\xi),b(\xi),r(\xi),s(\xi))$ of the system~\eqref{eq:4dim-dyn-sys-TFE}. Note that $(a(\xi),r(\xi),s(\xi))$ is a trajectory for the system~\eqref{eq:3dim-dyn-sys-TFE}.
 
We divide the proof into smaller lemmas.
\begin{lemma}
\label{lm:a0-not-0}
    Let $(a,r,s)(-\infty)=(a_0,0,0)$ for some $a_0\in\mathbb{R}$ and $(a(\xi),r(\xi),s(\xi))\not\equiv (a_0,0,0)$. Then $|a_0|\geq \frac45v$. In particular, there exists $\xi_0\in\mathbb{R}$ such that $a(\xi)$ is of constant sign for all $\xi<\xi_0$.
\end{lemma}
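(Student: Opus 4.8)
The plan is to study the orbit only near the equilibrium it limits onto. Since the right-hand side of \eqref{eq:4dim-dyn-sys-TFE} does not involve $b$, it suffices to work with the reduced three-dimensional system \eqref{eq:3dim-dyn-sys-TFE}, whose equilibria form the line $r=s=0$. We are in the case $v>0$. First I would linearize \eqref{eq:3dim-dyn-sys-TFE} at $(a_0,0,0)$; the only source of non-smoothness is $|r|$, so the vector field is smooth on each half-space $\{r>0\}$ and $\{r<0\}$ separately, and the exact identities \eqref{eq:id-1}--\eqref{eq:id-2} together with their $r<0$ counterparts diagonalize the flow: for $r>0$ the quantities $r+s$ and $2r-s$ evolve with rates $-v-\tfrac a2$ and $-v+a$ respectively, while for $r<0$ the quantities $r-s$ and $2r+s$ evolve with rates $-v+\tfrac a2$ and $-v-a$. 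Along any nontrivial orbit with $(a,r,s)(-\infty)=(a_0,0,0)$ we have $a(\xi)\to a_0$, so near $-\infty$ these four rates are as close as we wish to $-v-\tfrac{a_0}2,\ -v+a_0$ (in $\{r>0\}$) and $-v+\tfrac{a_0}2,\ -v-a_0$ (in $\{r<0\}$).

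The elementary mechanism I would isolate is: a scalar quantity $w$ with $\partial_\xi w=\rho(\xi)w$ and $\rho(\xi)\to\rho_0$ can satisfy $w(\xi)\to0$ as $\xi\to-\infty$ without vanishing identically only if $\rho_0\ge0$, since a strictly negative limiting rate forces $|w|$ to blow up in backward time. Suppose first that $r$ has constant sign on some interval $(-\infty,\xi_1)$; then by Proposition~\ref{rm:inv-manifolds} the orbit is trapped in one of the invariant regions $R_1$--$R_4$, and the blow-up principle applied to the two modal coordinates of the relevant half-space shows that the mode with negative limiting rate must vanish identically. Hence the orbit lies on one of the invariant manifolds $I_1$--$I_4$, on which \eqref{eq:3dim-dyn-sys-TFE} reduces to the scalar equation $\partial_\xi r=g(a)\,r$ with $g$ equal to one of $-v\mp\tfrac a2$ or $-v\pm a$; the requirement $g(a_0)\ge0$ then gives the explicit bound, in fact $|a_0|\ge v$, in this case.

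The genuinely delicate case, and the main obstacle, is when $r(\xi)$ changes sign infinitely often as $\xi\to-\infty$. Then the orbit must lie in the top or bottom region $R_1$ or $R_3$ and cross the switching set $\{r=0\}$ repeatedly, so no single linear diagonalization applies and, because the field is only Lipschitz (not $C^1$) across $\{r=0\}$, no smooth stable/unstable manifold theorem is available. Here I would argue by hand: follow the orbit through one excursion in $\{r>0\}$ and one in $\{r<0\}$, composing the two half-flows into a transition map along a ray of the $s$-axis and estimating its multiplier by means of the exact rates above; I expect this to show that in $R_1$ and $R_3$ the origin is in fact attracting in forward time for all moderate $|a_0|$, so that such crossing orbits cannot converge in backward time unless $|a_0|$ is at least as large as in the constant-sign case. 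Collecting the two cases yields the claimed $|a_0|\ge\tfrac45 v$ (with room to spare).

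Finally, the ``in particular'' clause is immediate from the bound: since $v>0$ we have $a_0\neq0$, and as $a(\xi)\to a_0$ there is $\xi_0$ with $a(\xi)$ of the sign of $a_0$ for every $\xi<\xi_0$. The only quantitative input actually needed downstream is this non-vanishing of $a_0$, which is why a non-sharp constant such as $\tfrac45$ is amply sufficient.
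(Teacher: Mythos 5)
Your proposal has a genuine gap at exactly the point you flag as ``the main obstacle.'' The case split is: (1) $r$ eventually of constant sign, handled by the modal rates from \eqref{eq:id-1}--\eqref{eq:id-2} and the blow-up principle (this part is fine and essentially reproduces what the paper later does in Lemmas~\ref{lm:S1} and~\ref{lm:inv-man}); (2) $r$ changing sign infinitely often as $\xi\to-\infty$. For case (2) you only write that you would build a transition map across $\{r=0\}$ and ``expect'' its multiplier to be contracting — no estimate is actually produced. But case (2) is precisely the scenario the lemma must exclude: when $a_0=0$ (the most important instance of $|a_0|<\tfrac45 v$), the sign argument of Lemma~\ref{lm:sign-r} is unavailable, since at a crossing $\partial_\xi r|_{r=0}=-sa/2$ has no definite sign, and one cannot rule out infinitely many switchings a priori. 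Note also that in the paper's logical order the constant-sign property of $r$ (Lemma~\ref{lm:sign-r}) is a \emph{consequence} of the present lemma, so you cannot lean on it here; your case (1) is legitimate only as one branch of a dichotomy whose other branch you have not closed.

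The paper avoids the dichotomy altogether with a one-line Lyapunov computation that is insensitive to the sign of $r$: differentiating $r^2+s^2$ along \eqref{eq:3dim-dyn-sys-TFE} gives
\begin{align*}
\partial_\xi(r^2+s^2)=-2v(r^2+s^2)-3ars+ar|r|\leq (r^2+s^2)\Bigl(-2v+\tfrac52|a|\Bigr),
\end{align*}
using $|rs|\leq\tfrac12(r^2+s^2)$ and $r|r|\leq r^2+s^2$. If $|a_0|<\tfrac45 v$ the bracket is eventually negative, so $r^2+s^2$ is nonincreasing near $-\infty$ while tending to $0$ there, hence vanishes identically on a half-line and then everywhere by uniqueness. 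This is where the (non-sharp) constant $\tfrac45$ comes from. If you want to salvage your route, you would have to actually construct and estimate the switching map in case (2); as written, the proof is incomplete.
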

\begin{proof}
    Indeed, 
    \begin{align*}
        \partial_\xi(r^2+s^2)=2r\partial_\xi r+2s\partial_\xi s=2r\left(-vr-\frac{sa}{2}+\frac{a|r|}{2}\right)+2s(-vs-ra).
    \end{align*}
    Estimating $rs\leq(r^2+s^2)/2$ and $r|r|\leq (r^2+s^2)$, we get 
    \begin{align*}
        \partial_\xi(r^2+s^2)\leq(r^2+s^2)\left(-2v+\frac52|a|\right).
    \end{align*}
    By contradiction, if $|a_0|<\frac{4}5v$, then there exists $\xi_0\in\mathbb{R}$ such that   $\partial_\xi(r^2+s^2)\leq0$ for all $\xi<\xi_0$. Taking into account that $r^2+s^2\to0$ as $\xi\to-\infty$, we obtain $r^2+s^2\equiv0$ for all $\xi<\xi_0$, and by uniqueness of solutions this holds for all $\xi\in\mathbb{R}$.
\end{proof}

\begin{lemma}
\label{lm:sign-r}
There exists $\xi_1\in\mathbb{R}$ such that $r(\xi)$ is of constant sign for all $\xi<\xi_1$.
\end{lemma}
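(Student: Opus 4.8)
The plan is to confine the orbit to a single invariant region from Proposition~\ref{rm:inv-manifolds} and then control the sign of $\partial_\xi r$ at any zero of $r$. Throughout I work with the reduced orbit $(a(\xi),r(\xi),s(\xi))$ of~\eqref{eq:3dim-dyn-sys-TFE}, which in the setting of Lemma~\ref{lm:a0-not-0} satisfies $(r,s)(\xi)\to(0,0)$ as $\xi\to-\infty$, and I use Lemma~\ref{lm:a0-not-0}, which provides $\xi_0\in\mathbb{R}$ such that $a(\xi)$ has a fixed sign (and is nonzero) for all $\xi<\xi_0$.

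First I would localize the orbit. Since $I_1,\dots,I_4$ are invariant and the right-hand side of~\eqref{eq:3dim-dyn-sys-TFE} is locally Lipschitz, uniqueness of solutions forbids a nonstationary orbit from crossing any $I_j$ (and it reaches the fixed-point line $M=\{r=s=0\}$ only in the limit $\xi\to\pm\infty$). As $\overline{R_1},\dots,\overline{R_4}$ cover $\mathbb{R}^3$ and meet pairwise only along the $I_j$ and along $M$, the orbit must lie either entirely on a single $I_j$ or entirely in the interior of a single $R_i$. In the first case the claim is immediate, since $r>0$ on $I_1,I_2$ and $r<0$ on $I_3,I_4$.

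Next I treat the region cases. On the interiors of $R_2$ and $R_4$ one has $r>0$ and $r<0$ respectively (these sectors meet $\{r=0\}$ only on $M$), so $r$ is of constant sign for \emph{all} $\xi$ and nothing more is needed. The only delicate sectors are $R_1$ and $R_3$, which straddle $\{r=0\}$: one checks $R_1\cap\{r=0\}=\{r=0,\ s>0\}$ and $R_3\cap\{r=0\}=\{r=0,\ s<0\}$ (off $M$). The key observation is that on $\{r=0\}$ the third equation of~\eqref{eq:3dim-dyn-sys-TFE} reduces to $\partial_\xi r=-\tfrac{sa}{2}$. Hence, while the orbit stays in $R_1$ and $\xi<\xi_0$, both $s>0$ and $\mathrm{sign}(a)$ are fixed, so every zero of $r$ is a transversal crossing with the \emph{same} direction $\mathrm{sign}(\partial_\xi r)=-\mathrm{sign}(s\,a)$. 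A crossing can therefore occur in only one direction, so $r$ changes sign at most once on $(-\infty,\xi_0)$, which yields a $\xi_1\leq\xi_0$ with $r$ of constant sign on $(-\infty,\xi_1)$. The sector $R_3$ is identical with $s<0$.

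The main obstacle is exactly the nonsmoothness flagged in the remark after Lemma~\ref{lm:TW-TFE}: because of the term $|r|$ one cannot linearize at the fixed point and invoke the smooth stable/unstable manifold theorem to rule out spiralling of $(r,s)$ around the origin. The invariant-region decomposition is what replaces the heuristic ``the eigenvalues are real, so there is no rotation'': trapping the orbit in one sector fixes the sign of $s$ at any prospective zero of $r$, and combining this with the fixed sign of $a$ from Lemma~\ref{lm:a0-not-0} forces all crossings of $\{r=0\}$ to go the same way, excluding the oscillation that would otherwise prevent $r$ from having constant sign near $-\infty$.
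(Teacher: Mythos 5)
Your proof is correct and follows essentially the same route as the paper: restrict to the invariant sectors $R_i$ of Proposition~\ref{rm:inv-manifolds}, dispose of $R_2,R_4$ (and the $I_j$) immediately, and in $R_1,R_3$ use Lemma~\ref{lm:a0-not-0} together with $\partial_\xi r\rvert_{r=0}=-\tfrac{sa}{2}$ to fix the direction of every crossing of $\{r=0\}$. If anything you are more careful than the paper at two points — the explicit argument that a nonstationary orbit cannot leave a single sector, and the deduction ``all crossings go one way, hence at most one sign change, hence constant sign near $-\infty$'' (the paper jumps straight to a definite sign of $r$) — but the underlying mechanism is identical.
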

\begin{proof}
    Due to Proposition~\ref{rm:inv-manifolds}, the regions $R_i$, $i=1,2,3,4$, are invariant. For trajectory $(a,r,s)\in R_2$ or $(a,r,s)\in R_4$ the statement is obviously true.

    Consider $(a,r,s)\in R_1$. By Lemma~\ref{lm:a0-not-0}, for $\xi<\xi_0$ the sign of $a(\xi)$ is constant. Without loss of generality, assume that $a(\xi)>0$, $\xi<\xi_0$. Then
    \begin{align*}
        \partial_\xi r\biggr\rvert_{r=0}=-\frac{sa}{2}<0.
    \end{align*}
    This implies that there exists $\xi_1\in\mathbb{R}$, $\xi_1<\xi_0$ such that $r(\xi)<0$ for $\xi<\xi_1$. The case $(a,r,s)\in R_3$ is similar.
\end{proof}

\begin{lemma}
\label{lm:S1}
    Let function $\alpha: \mathbb{R}^3 \to \mathbb{R}$ satisfy
    \begin{equation}
    \label{eq1.0.5}
        \frac{d}{d\xi} \alpha(x(\xi)) = \alpha(x(\xi))\cdot\mu(x(\xi)),
    \end{equation}
    where $\mu:\mathbb{R}^3 \to \mathbb{R}$ is a continuous function. Assume that
\begin{equation}
\label{eq1.1}
   \lim\limits_{\xi \to -\infty} x(\xi) =x^-, \quad \mu(x^-)<0, \quad \alpha(x^-) = 0,
\end{equation}
then 
\begin{equation}\label{eq1.2}
\alpha(x(\xi)) \equiv 0, \quad \xi \in \mathbb{R}.
\end{equation}
\end{lemma}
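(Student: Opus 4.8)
The plan is to reduce the statement to a one-dimensional linear ODE and to exploit the contracting sign of $\mu$ near $-\infty$. Set $\beta(\xi):=\alpha(x(\xi))$. By \eqref{eq1.0.5}, $\beta$ is a $C^1$ scalar function solving the linear homogeneous equation $\beta'(\xi)=\mu(x(\xi))\,\beta(\xi)$, whose coefficient $\xi\mapsto\mu(x(\xi))$ is continuous. Since $\alpha$ is continuous (in all our applications it is linear in the coordinates) and $x(\xi)\to x^-$, the hypotheses \eqref{eq1.1} give $\beta(\xi)\to\alpha(x^-)=0$ as $\xi\to-\infty$; this is the limit we will contradict if $\beta$ does not vanish identically.

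First I would localize the sign of the coefficient. Because $\mu$ is continuous, $\mu(x^-)<0$, and $x(\xi)\to x^-$, there exist $\delta>0$ and $\xi_1\in\mathbb{R}$ with $\mu(x(\xi))\le-\delta$ for all $\xi<\xi_1$. On this left half-line the equation for $\beta$ is genuinely contracting as $\xi$ increases, equivalently expanding as $\xi\to-\infty$.

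Next I would run the explicit-solution (Gronwall) estimate. Integrating the linear ODE between $\xi<\xi_0$ with $\xi_0\le\xi_1$ gives $\beta(\xi_0)=\beta(\xi)\exp\bigl(\int_{\xi}^{\xi_0}\mu(x(\tau))\,d\tau\bigr)$, and the bound $\int_{\xi}^{\xi_0}\mu(x(\tau))\,d\tau\le-\delta(\xi_0-\xi)$ yields $|\beta(\xi)|\ge|\beta(\xi_0)|\,e^{\delta(\xi_0-\xi)}$. If $\beta(\xi_0)\neq0$, the right-hand side tends to $+\infty$ as $\xi\to-\infty$, contradicting $\beta(\xi)\to0$. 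Hence $\beta(\xi_0)=0$ for every $\xi_0<\xi_1$.

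Finally, since $\beta$ solves a linear ODE with continuous coefficient and vanishes at some $\xi_0<\xi_1$, uniqueness of the initial value problem forces $\beta\equiv0$ on all of $\mathbb{R}$, which is exactly \eqref{eq1.2}. The only point requiring care is that the sign condition on $\mu$ is known only at the limit point $x^-$, so the contraction estimate is valid merely on a left half-line; this is why the argument first establishes vanishing near $-\infty$ and only then propagates it globally by uniqueness. I do not expect any genuine obstacle beyond this bookkeeping.
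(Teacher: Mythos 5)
Your proof is correct and follows essentially the same strategy as the paper's: establish that $\beta$ vanishes on a left half-line where the coefficient $\mu(x(\xi))$ is negative, then propagate $\beta\equiv 0$ to all of $\mathbb{R}$ by uniqueness for the linear ODE. The only cosmetic difference is that the paper works with $\beta=\alpha^2$ and argues via monotonicity of a nonnegative nonincreasing function tending to $0$, whereas you work with $\alpha$ directly and get the contradiction from the explicit exponential representation $|\beta(\xi)|\geq|\beta(\xi_0)|e^{\delta(\xi_0-\xi)}$; both are equally valid.
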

\begin{proof}
    Let $\beta(\xi) = \alpha^2(x(\xi))$, then by \eqref{eq1.0.5}
\begin{equation}\label{eq1.3}
    \frac{d}{d\xi}\beta(\xi) = 2\beta(\xi)\cdot \mu(x(\xi)).
\end{equation}
    Due to \eqref{eq1.1} and continuity of $\mu$, there exists $\xi_0\in\mathbb{R}$ such that for all $\xi<\xi_0$ the inequality $\mu(x(\xi))<0$ holds. Note that $\beta(\xi) \geq 0$ and, hence, 
    $$
    \frac{d}{d\xi}\beta(\xi) \leq 0, \quad \xi<\xi_0.
    $$
    Thus, $\beta(\xi)$ is a nonincreasing nonnegative function on $(-\infty, \xi_0)$. Since $\beta(\xi) \to 0$ as $\xi \to -\infty$ we conclude that $\beta(\xi) = 0$ for $\xi< \xi_0$. Due to uniqueness of solution of \eqref{eq1.3}, $\beta(\xi) \equiv 0$ for all $\xi \in \mathbb{R}$, which implies \eqref{eq1.2}.
\end{proof}

\begin{lemma}
\label{lm:inv-man}
    Consider a trajectory $(a,r,s)$ of the system~\eqref{eq:3dim-dyn-sys-TFE} such that 
    \newline $(a,r,s)(-\infty)=(a_0,0,0)$, $a_0\in\mathbb{R}$. Then $(a,r,s)\in I_1\cup I_2\cup I_3\cup I_4$, where $I_k, k=1,2,3,4,$ are defined in Proposition~\ref{rm:inv-manifolds}.
\end{lemma}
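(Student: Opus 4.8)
The plan is to read off the invariant manifold from the two linear identities~\eqref{eq:id-1}--\eqref{eq:id-2} (and their $r<0$ analogues) combined with Lemma~\ref{lm:S1}: each of the quantities $r\pm s$ and $2r\mp s$ evolves by a scalar linear equation $\partial_\xi\alpha=\mu(a,r,s)\,\alpha$, so if such an $\alpha$ vanishes at the limiting fixed point and its coefficient $\mu$ is negative there, then $\alpha\equiv 0$ and the orbit is trapped in the corresponding manifold. First I would dispose of the trivial case: the constant solution $(a_0,0,0)$ is a fixed point and is excluded, so assume the trajectory is non-constant. Then Lemma~\ref{lm:sign-r} gives $\xi_1$ with $r(\xi)$ of constant sign on $(-\infty,\xi_1)$, and Lemma~\ref{lm:a0-not-0} (using $v>0$) gives $|a_0|\geq \tfrac45 v>0$, so in particular $a_0\neq 0$; it is the sign of $a_0$ that selects which combination vanishes.

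Next I would split into the four sign patterns of $r$ and $a_0$ near $-\infty$. Suppose $r>0$ on $(-\infty,\xi_1)$, where~\eqref{eq:id-1}--\eqref{eq:id-2} hold, and write $x^-=(a_0,0,0)$. If $a_0>0$ I apply Lemma~\ref{lm:S1} to $\alpha=r+s$ with $\mu=-v-\tfrac a2$: since $\mu(x^-)=-v-\tfrac{a_0}{2}<0$ and $\alpha(x^-)=0$, we get $r+s\equiv0$, i.e. $s=-r$, so the orbit lies in $I_2$. If $a_0<0$ I instead take $\alpha=2r-s$ with $\mu=-v+a$, where $\mu(x^-)=-v+a_0<0$, giving $s=2r$ and the orbit in $I_1$. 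The case $r<0$ is entirely symmetric via the $r<0$ analogues of~\eqref{eq:id-1}--\eqref{eq:id-2}: for $a_0>0$ the coefficient $-v-a$ of $2r+s$ is negative at $x^-$, forcing $s=-2r$ (orbit in $I_4$), and for $a_0<0$ the coefficient $-v+\tfrac a2$ of $r-s$ is negative, forcing $s=r$ (orbit in $I_3$). In every case one of the four combinations vanishes, which is exactly the defining relation of the relevant $I_k$.

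The main point to watch — and the only genuine obstacle — is that, because of the nonsmooth term $\tfrac{a|r|}{2}$, each identity is valid only on the half-space $r>0$ (resp. $r<0$). Consequently Lemma~\ref{lm:S1} is applied on the interval $(-\infty,\xi_1)$ on which $r$ keeps its sign, yielding the vanishing of the relevant combination there and hence showing the orbit enters the appropriate $I_k$ near $\xi=-\infty$; invariance of $I_k$ from Proposition~\ref{rm:inv-manifolds}, together with uniqueness of solutions, then propagates this membership along the whole orbit. What makes the argument close is precisely that for $v>0$ the sign data supplied by Lemmas~\ref{lm:a0-not-0} and~\ref{lm:sign-r} guarantee that, among the two candidate combinations available in each half-space, at least one has a strictly negative coefficient $\mu(x^-)$, which is exactly the hypothesis required to invoke Lemma~\ref{lm:S1}.
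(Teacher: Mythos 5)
Your proposal is correct and follows essentially the same route as the paper's proof: constant sign of $r$ from Lemma~\ref{lm:sign-r}, constant sign of $a$ (and $a_0\neq 0$) from Lemma~\ref{lm:a0-not-0}, then Lemma~\ref{lm:S1} applied to the appropriate combination among $r\pm s$, $2r\mp s$ via~\eqref{eq:id-1}--\eqref{eq:id-2} and their $r<0$ analogues, with invariance of the $I_k$ propagating the conclusion along the whole orbit. Your explicit handling of the constant trajectory and of the fact that each identity holds only on its half-space $\{r>0\}$ or $\{r<0\}$ is a slightly more careful write-up of exactly the argument the paper gives.
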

\begin{proof}
    By Lemma~\ref{lm:sign-r}, there exists $\xi_1\in\mathbb{R}$ such that $r(\xi)$ is of constant sign for all $\xi<\xi_1$.
    
    Consider the case $r(\xi)>0$, $\xi<\xi_1$.  By Lemma~\ref{lm:a0-not-0}, we have $a(\xi)$ is of constant sign for $\xi<\xi_1$. 
    \begin{itemize}
        \item 
    If $a(\xi)>0$, $\xi<\xi_1$, then $-v-a/2<0$. Applying Lemma~\ref{lm:S1} to the relation~\eqref{eq:id-1} with $\alpha=r+s$, we obtain $r+s\equiv0$, that is $(a,r,s)\in I_2$.
    \item If $a(\xi)<0$, $\xi<\xi_1$, then $-v+a<0$. Applying Lemma~\ref{lm:S1} to the relation~\eqref{eq:id-2} with $\alpha=2r-s$, we obtain $2r-s\equiv0$, that is $(a,r,s)\in I_1$.
    \end{itemize}
    The case $r(\xi)<0$, $\xi<\xi_1$, is similar.    
\end{proof}

So we reduced the problem to finding heteroclinic trajectories in each of the invariant half-planes $I_k, k=1,2,3,4$.
    \begin{lemma}
    \label{lm:inside-inv-man}
    \begin{itemize}
        \item[(i)] 
        If $(a,r,s)\in I_1$, then there exist a unique $a_0\in\mathbb{R}$ such that there exists a heteroclinic trajectory, satisfying
        \begin{align}
        \label{eq:hetero-1}
        (a,r,s)(-\infty)=(a_0,0,0) \quad\text{and}\quad  (a,r,s)(+\infty)=(0,0,0).
    \end{align}
    Moreover, $a_0=-4v$.
        \item[(ii)] 
        If $(a,r,s)\in I_4$, then there exist a unique $a_0\in\mathbb{R}$ such that there exists a heteroclinic trajectory, satisfying~\eqref{eq:hetero-1}. 
        Moreover, $a_0=4v$.
        \item[(iii)]
        If $(a,r,s)\in I_2$ or $(a,r,s)\in I_3$, then there does not exist heteroclinic trajectories, satisfying~\eqref{eq:hetero-1}.
        \end{itemize}
    
    \end{lemma}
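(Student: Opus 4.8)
The plan is to exploit the fact that on each invariant half-plane $I_k$ the three-dimensional system~\eqref{eq:3dim-dyn-sys-TFE} collapses to a planar system in $(a,r)$ whose orbits can be integrated explicitly. First I would restrict to $I_1$, where $r>0$ and $s=2r$. Substituting $s=2r$ and $|r|=r$ into~\eqref{eq:3dim-dyn-sys-TFE} (equivalently, invoking identities~\eqref{eq:id-1}--\eqref{eq:id-2}, which guarantee that the constraint $s=2r$ is preserved) reduces the flow to
\begin{align*}
    \partial_\xi a = r, \qquad \partial_\xi r = -\left(v+\tfrac{a}{2}\right)r.
\end{align*}
Since $\partial_\xi r$ is proportional to $r$, dividing the two equations cancels $r$ and yields the separable orbit equation $\frac{dr}{da}=-v-\frac{a}{2}$, whose solution through the target point $(0,0)$ is the parabola $r=-va-\frac{a^2}{4}=-a\left(v+\frac{a}{4}\right)$.

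Next I would read off the connecting orbit from this explicit curve. Its zeros are $a=0$ and $a=-4v$, so the only fixed point other than $(0,0)$ that it meets is $(a_0,0)$ with $a_0=-4v$; moreover, for $v>0$ one checks that $r=-a(v+a/4)>0$ on the whole interval $a\in(-4v,0)$, so the orbit indeed stays inside $I_1$. To confirm this is a genuine heteroclinic I would linearize the planar system at both endpoints: at $(0,0)$ the eigenvalues are $0$ and $-v$, so the orbit enters along the one-dimensional (strong) stable manifold as $\xi\to+\infty$, while at $(-4v,0)$ they are $0$ and $+v$, so the orbit leaves along the unstable direction as $\xi\to-\infty$; in particular both limits are attained only asymptotically. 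Because the stable manifold of $(0,0)$ inside $I_1$ is one-dimensional and coincides with the explicit parabola, the value $a_0=-4v$ is the \emph{unique} one admitting a connection, which proves (i). The case $I_4$ ($r<0$, $s=-2r$) is entirely symmetric: the reduced equation is $\frac{dr}{da}=-v+\frac{a}{2}$, the orbit through the origin is $r=a\left(\frac{a}{4}-v\right)$, its second zero is $a_0=4v$, and $r<0$ holds on $(0,4v)$, giving (ii).

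For the non-existence claim (iii) I would run the same reduction on $I_2$ ($r>0$, $s=-r$) and $I_3$ ($r<0$, $s=r$). On $I_2$ the reduced orbit equation is $\frac{dr}{da}=-v+a$, integrating to $r=a\left(\frac{a}{2}-v\right)$ through the origin, with zeros $a=0$ and $a=2v$. The decisive point is a sign check: membership in $I_2$ requires $r>0$, and the branch emanating from the origin on which $r>0$ is the branch $a<0$, where $r=a(a/2-v)>0$ strictly for \emph{every} $a<0$. Hence this branch never returns to $r=0$, meets no second fixed point, and instead escapes to $a\to-\infty$, so no heteroclinic lies in $I_2$. The symmetric computation on $I_3$ (orbit $r=-a(v+a/2)$, zeros $a=0,-2v$, with the constraint $r<0$ forcing the branch $a>0$, on which there is no further zero) rules out $I_3$ likewise.

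The main obstacle is not the integration, which is elementary, but correctly matching the two branches of each explicit parabola to the sign constraint ($r>0$ on $I_1,I_2$ and $r<0$ on $I_3,I_4$) that defines the invariant half-plane: it is precisely this bookkeeping that separates the manifolds $I_1,I_4$ carrying a connection from $I_2,I_3$ carrying none, and that fixes the sign of $a_0$. A secondary point requiring care is ensuring the endpoints are reached only in the limit $\xi\to\pm\infty$ and that the connection is unique; both follow from the eigenvalue computation at the two fixed points together with the one-dimensionality of the stable manifold of $(0,0)$.
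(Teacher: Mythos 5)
Your proposal is correct and follows essentially the same route as the paper: on each invariant half-plane $I_k$ the flow reduces to a planar system whose orbits lie on explicit parabolas (the paper phrases this via the conserved quantities $r+(v+a/2)^2$, etc., while you integrate $dr/da$, which is equivalent), and the second zero of the parabola through the origin yields $a_0=\mp4v$ on $I_1,I_4$ while the sign constraint on $r$ rules out connections in $I_2,I_3$. Your branch/sign bookkeeping for part (iii) just makes explicit the phase-portrait argument the paper leaves terse.
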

\begin{proof}

\textit{(i)} The two-dimensional system inside $I_1$ takes the form:
\begin{align*}
    \partial_\xi a&=r,
    \\
    \partial_\xi r&=r\left(-v-\frac{a}{2}\right).
\end{align*}
By a straightforward computation, we get
\begin{align*}
        \frac{d}{d\xi}\left( r+\left(v+\frac{a}{2}\right)^2\right)=0.
\end{align*}
This allows us to construct the phase portrait and write an explicit relation
\begin{align}
\label{eq:parabola}
         r=-\left(v+\frac{a}{2}\right)^2+r_0, \quad r_0\in\mathbb{R}.
\end{align}
Substituting it into the equation $\partial_\xi a=r$, it is clear, that there exists a unique~$r_0$ and $a_0$ such that the trajectory $(a,r,s)$ lies in the region $\{r\geq 0\}$ and  satisfies \eqref{eq:hetero-1}. Indeed, the condition 
 $(a,r,s)(+\infty)=(0,0,0)$ implies   $r_0=v^2$. The relation~\eqref{eq:parabola} immediately gives $a_0=-4v$.

\textit{(ii)} The proof is similar to \textit{(i)}. The two-dimensional system inside $I_4$ reads as:
\begin{align*}
    \partial_\xi a&=r,
    \\
    \partial_\xi r&=r\left(-v+\frac{a}{2}\right).
\end{align*}
By a straightforward computation, we get 
\begin{align}
\label{eq:parabola-1}
        r=\left(-v+\frac{a}{2}\right)^2+r_0, \quad r_0\in\mathbb{R}.
\end{align}
Substituting it into the equation $\partial_\xi a=r$, we obtain that there exists a unique~$r_0$ and $a_0$ such that the trajectory $(a,r,s)$ lies in the region $\{r\leq0\}$ and satisfies \eqref{eq:hetero-1}. Indeed, the condition 
 $(a,r,s)(+\infty)=(0,0,0)$ implies   $r_0=-v^2$.
 The relation~\eqref{eq:parabola-1} immediately gives $a_0=4v$.

\textit{(iii)} The analogous reasoning for the half-planes $I_2$ and $I_3$ implies
\begin{align*}
    \text{on }I_2:&\quad \frac{d}{d\xi}\left(r-\frac{(a-v)^2}{2}\right)=0;
    \qquad &&
    \text{on }I_3:\quad \frac{d}{d\xi}\left(r+\frac{(a+v)^2}{2}\right)=0.
\end{align*}
These relations allow to construct the phase portrait, and conclude that there are no heteroclinic trajectories in $I_2$ and $I_3$.  
\end{proof}

Let us finish the proof of Theorem~\ref{thm:TFE-reformulated} for $v>0$. By Lemma~\ref{lm:inv-man}, the heteroclinic trajectory $(a(\xi),r(\xi),s(\xi))$ necessarily lies in one of the four invariant half-planes $I_k, k=1,2,3,4$. By Lemma~\ref{lm:inside-inv-man}, there exist exactly two heteroclinic trajectories inside $I_k, k=1,2,3,4$ which satisfy~\eqref{eq:hetero-1}. 

In particular, if $(a,r,s)\in I_1$, then $s\equiv 2r$, thus, $\partial_\xi(b-2a)=0$. Taking into account~\eqref{eq:hetero-11}, we obtain
$b_0-2a_0=2$, which gives $b_0=-8v+2$.

If $(a,r,s)\in I_4$, then $s\equiv -2r$, thus, $\partial_\xi(b+2a)=0$. Taking into account~\eqref{eq:hetero-11}, we obtain
$b_0+2a_0=2$, which gives $b_0=-8v+2$.

This finishes the proof of Theorem~\ref{thm:TFE-reformulated} for $v>0$.

\medskip
{\bf Item 2.}    Assume that there exists a heteroclinic trajectory $(a(\xi), r(\xi), s(\xi))$ connecting $(a_0, 0, 0)$ and $(0, 0, 0)$. The first and the third equations of system  \eqref{eq:3dim-dyn-sys-TFE} imply that
    $$
    \frac{d}{d \xi}\left(s+\frac{a^2}2\right) = 0,
    $$
    and hence $s+\frac{a^2}2\equiv 0$. In particular, $a_0 = 0$ and the heteroclinic trajectory is actually homoclinic. Substituting $s = -a^2/2$ into \eqref{eq:3dim-dyn-sys-TFE}, we get the following system
    \begin{align}
\label{eq:2dim-dyn-sys-TFE-v0}
    \begin{split}
        \partial_{\xi}a&=r,
        \\
        \partial_\xi r&=\frac{a^3}{4}+\frac{a|r|}{2}.
    \end{split}
\end{align}
Analyzing the signs of $\partial_{\xi}a$ and $\partial_\xi r$ on lines $\{r = 0\}$ and $\{a=0\}$, we conclude that the sets 
$$
A_1 = \{r\geq 0, a \geq 0\}, \quad A_3 = \{r\leq 0, a\leq 0\}
$$
are positive invariant and sets 
$$
A_2 = \{r\leq 0, a \geq 0\}, \quad A_4 = \{r\geq 0, a\leq 0\}
$$    
are negative invariant, see Fig.~\ref{fig:v0}.
\begin{figure}[ht]
    \centering
    \includegraphics[width=0.3\textwidth]{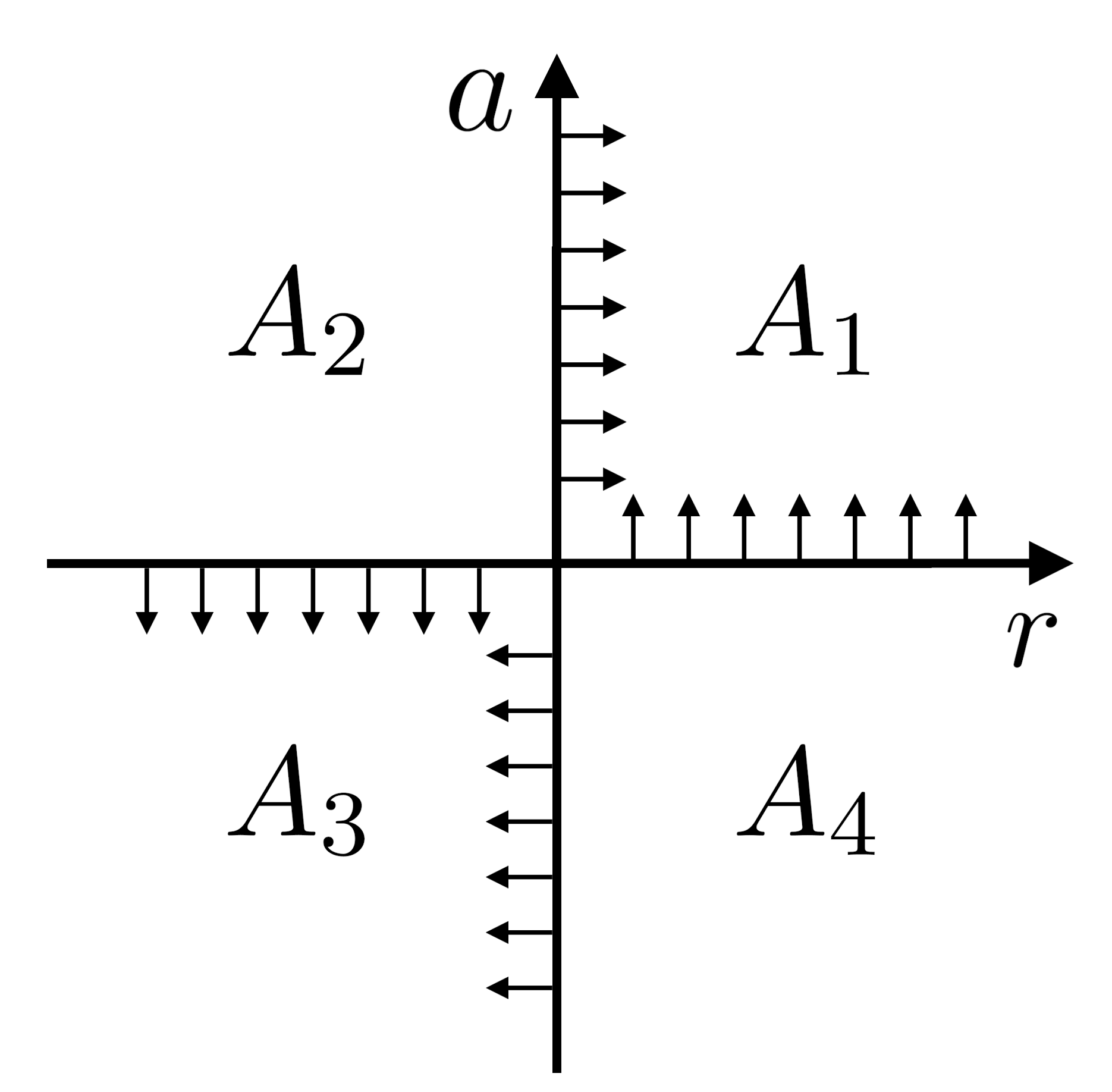}
    \caption{Vector field \eqref{eq:2dim-dyn-sys-TFE-v0} on axis $a=0$ and $r=0$.}
    \label{fig:v0}
\end{figure}
Note that in the sets $A_1$, $A_3$ the value of $a^2$ is monotonically increasing and in the sets $A_2$, $A_4$ the value of $r^2$ is monotonically decreasing. These  statements contradict to the existence of nontrivial homoclinic trajectory associated to the point $(a = 0, r = 0)$.

\medskip
{\bf Item 3.} The case $v<0$ can be managed similarly to $v>0$. We omit the~proof.
\end{proof}

\subsection{Existence of propagating terrace}
\label{subsec:prop-terrace-TFE}

 In this section we prove Theorem~\ref{thm:main-TFE}. By Theorem~\ref{thm:TW-TFE} we obtain:
\begin{itemize}
    \item for every $v<0$ there exist exactly one traveling wave solution $c_{1,2}$ connecting $\sigma_0=(-1,-1)$  with  each of the states
    \begin{align*}
\left[
\begin{array}{ll}
    \sigma_1^{*}(v)=(c_1^*(v),c_2^*(v))=(-2v-1,-6v-1) \\
    \sigma_1^{*}(v)=(c_1^*(v),c_2^*(v))=(-6v-1,-2v-1)
\end{array}
\right .
    \end{align*}
Note that $\mathcal{H}_{-1}:=\{\sigma_1^*(v): v<0\}$, see Fig.~\ref{fig:HugoniotTFE}.

    \item for every $v>0$ there exist exactly one traveling wave solution $c_{1,2}$ 
    connecting each of the states
    \begin{align*}
\left[
\begin{array}{ll}
    \sigma_1^{**}(v)=(c_1^{**}(v),c_2^{**}(v))=(-2v+1,-6v+1) \\
    \sigma_1^{**}(v)=(c_1^{**}(v),c_2^{**}(v))=(-6v+1,-2v+1)
\end{array}
\right .
    \end{align*}
    with $\sigma_2=(1,1)$. 
    Note that  $\mathcal{H}_1:=\{\sigma_1^{**}(v): v>0\}$, see Fig.~\ref{fig:HugoniotTFE}.
\end{itemize}
Notice that the intersection $\mathcal{H}_1\cap\mathcal{H}_{-1}$ consists of two points $(-\frac12,\frac12)$ and $(\frac12,-\frac12)$, and, thus, Theorem~\ref{thm:main-TFE} is proven.

\subsection{Transversal intersection of stable and unstable manifolds}
\label{subsec:transverse-TFE}
In this section we show that a heteroclinic trajectory, described in Theorem~\ref{thm:TFE-reformulated}, can be seen as a transversal intersection of suitable stable and unstable manifolds.  We will use this fact in  Section~\ref{sec:proof-peaceman} to prove the persistence of the heteroclinic trajectory under small perturbations.

We consider a system~\eqref{eq:4dim-dyn-sys-TFE} under the constraint $r\leq0$, that is 
\begin{align}
\label{eq:4dim-dyn-sys-TFE-r-negative}
    \begin{split}
        \partial_{\xi}a&=r,
        \\
        \partial_\xi b&=s,
        \\
        \partial_\xi r&=-vr-\frac{a}{2}(s+r),
        \\
        \partial_\xi s&=-vs-ra.
    \end{split}
\end{align}
In what follows we assume $(a,b,r,s)\in\mathbb{R}^4$. 
We recall that $M$, defined in~\eqref{eq:fixed-points-4-dim}, is the set of fixed points of~\eqref{eq:4dim-dyn-sys-TFE-r-negative}. By Theorem~\ref{thm:TFE-reformulated} Item~1, for $v>0$ there exists a heteroclinic trajectory $\Gamma$ between the fixed points: 
\begin{align*}
    A:=(a,b,r,s)(+\infty)&=(0,2,0,0),\quad\text{and}\quad
    \\
    \nonumber 
    B:=(a,b,r,s)(-\infty)&=
    (4v,-8v+2,0,0), 
\end{align*}
such that $r(\xi)<0$, $\xi\in\mathbb{R}$. 
 Fix a point $C \in \Gamma$. Denote by $\phi_\xi(X)$ an image of the point $X\in\mathbb{R}^4$ under the flow of the system~\eqref{eq:4dim-dyn-sys-TFE-r-negative} at  $\xi\in\mathbb{R}$.  For any point 
$X\in\mathbb{R}^4$ we denote by  $U_X$ some neighbourhood of $X$ in $\mathbb{R}^4$.  

\subsubsection{Stable manifold}
 First, notice that the tangent space at point $A$ has a splitting into stable and central parts:
\begin{align*}
\mathrm{T}_A\mathbb{R}^4=E^s(A)\oplus E^c(A),
\end{align*}
where the central part $E^c(A)$ is spanned by eigenvectors corresponding to zero eigenvalues $\lambda_{3,4}=0$:
\begin{align*}
    E^c(A)=\langle 
    \begin{pmatrix}
    1&0&0&0    
    \end{pmatrix}^T, 
    \begin{pmatrix}
    0&1&0&0    
    \end{pmatrix}^T
    \rangle,
\end{align*}
while the stable part $E^s(A)$ is spanned by eigenvectors corresponding to eigenvalues with negative real part $\lambda_{1,2}=-v$ (here $v>0$ by Theorem~\ref{thm:TFE-reformulated}, Item~1):
\begin{align}
\label{eq:EsA}
    E^s(A)=\langle 
    \begin{pmatrix}
    1 & 1 & -v & -v    
    \end{pmatrix}^T, 
    \begin{pmatrix}
    -1 & 2 & v & -2v    
    \end{pmatrix}^T
    \rangle.
\end{align}
By~\cite[Theorem~4.1(e)]{1970-HPS}, there exists a local stable manifold $W^s_{loc}(A)$ defined~by
\begin{align}\label{eq:Wsloc}
    W^s_{loc}(A)=\{X\in U_A: \phi_\xi(X)\to A\text{ as }\xi\to\infty\text{ and }\phi_\xi(X)\in U_A\text{ for all }\xi\geq0\},
\end{align}
which is tangent to $E^s(A)$ at point $A$. Notice that $\mathrm{dim} (W^s_{loc}(A))=2$.

Since $C \in \Gamma$ there exists $\xi_0<0$ and $x\in W^s_{loc}(A)$ such that $C=\phi_{\xi_0}(x)$. Denote
\begin{align} \label{eq:Wscomp}
  W^s_{\mathrm{comp}}(A):&= \{\phi_\xi(x)\in\mathbb{R}^4: \text{ for all }x\in W^s_{loc}(A), \xi\in[-\xi_0-1,0]\}.
\end{align}
Note that $C\in W^s_{\mathrm{comp}}(A)$.

In what follows we will need the following construction. Consider small enough $d>0$ and a set 
\begin{equation}\label{eq:defta}
    A^* = \{(a, b, 0, 0), \mbox{ where } |a|, |b-2| < d\},    
\end{equation}
    such that $\dim W^s_{loc}(x) = 2$ for any $x \in A^*$. 

\subsubsection{Unstable manifold}
The tangent space at point $B$ has a splitting into stable,  central and unstable parts:
\begin{align*}
\mathrm{T}_B\mathbb{R}^4=E^s(B)\oplus E^c(B)\oplus E^u(B),
\end{align*}
where the central part $E^c(B)$, analogously to the case for the point $A$, is spanned by eigenvectors corresponding to zero eigenvalues $\lambda_{3,4}=0$:
\begin{align*}
    E^c(B)=\langle 
    \begin{pmatrix}
    1&0&0&0    
    \end{pmatrix}^T, 
    \begin{pmatrix}
    0&1&0&0    
    \end{pmatrix}^T
    \rangle,
\end{align*}
while the stable part $E^s(B)$ is spanned by eigenvector corresponding to eigenvalue with negative real part $\lambda_{1}=-5v$:
\begin{align*}
    E^s(B)=\langle 
    \begin{pmatrix}
    1 & 1 & -5v & -5v    
    \end{pmatrix}^T
    \rangle,
\end{align*}
and the unstable part $E^u(B)$ is spanned by eigenvector corresponding to eigenvalue with positive real part $\lambda_{1}=v$:
\begin{align}\label{eq:13.1}
    E^u(B)=\langle 
    \begin{pmatrix}
    -1 & 1 & -v & 2v    
    \end{pmatrix}^T
    \rangle.
\end{align}

By~\cite[Theorem~4.1(be)]{1970-HPS}, there exists a unique  local unstable manifold $W^u_{loc}(B)$ defined by
\begin{align}
\label{eq:Wuloc}
    W^u_{loc}(B)=\{X\in U_B: \phi_\xi(X)\to B\text{ as }\xi\to-\infty\text{ and }\phi_\xi(X)\in U_B\text{ for all }\xi\leq0\},
\end{align}
which is tangent to $E^u(B)$ at point $B$. Notice that $\mathrm{dim} (W^u_{loc}(B))=1$. 

Let $\tilde{S}$~be the neighbourhood of the point $B$ inside $M$:
\begin{align}\label{eq:defts}
    \tilde{S}:&=\{(a,b,0,0): a\in(3.9v,4.1v) , b\in\bigl(0.9(-8v+2),1.1(-8v+2)\bigr) \}.
\end{align}
By~\cite[Theorem~4.1(ab)]{1970-HPS}, there exists a   unique  local unstable manifold $W^u_{loc}(\tilde{S})$ which is tangent to $E^u(X)\oplus E^c(X)$ for any $X\in \tilde{S}$. Notice that $\mathrm{dim}(W^u_{loc}(\tilde{S}))=3$.

Due to Proposition~\ref{rm:inv-manifolds}, we observe that the set 
\begin{align*}
    \{s+2r=0\}:=\{(a,b,r,s): s+2r=0\}
\end{align*}
is invariant and tangent to $E^u(X)\oplus E^c(X)$ for $X\in S$, thus $ W^u_{loc}(\tilde{S})\subset \{s+2r=0\}$.

Since $C \in \Gamma$ there exists $\xi_1\geq0$ and $y\in W^u_{loc}(\tilde{S})$ such that $C=\phi_{\xi_1}(y)$. Denote 
\begin{align}\label{eq:Wucomp}
  W^u_{\mathrm{comp}}(\tilde{S}):&=\{\phi_\xi(x)\in\mathbb{R}^4: \text{ for all }x\in W^u_{loc}(\tilde{S}), \xi\in[0,\xi_1+1]\}.
\end{align}
Note that $C\in W^u_{\mathrm{comp}}(\tilde{S})$.

\begin{figure}[ht]
    \centering
    \includegraphics[width=0.45\textwidth]{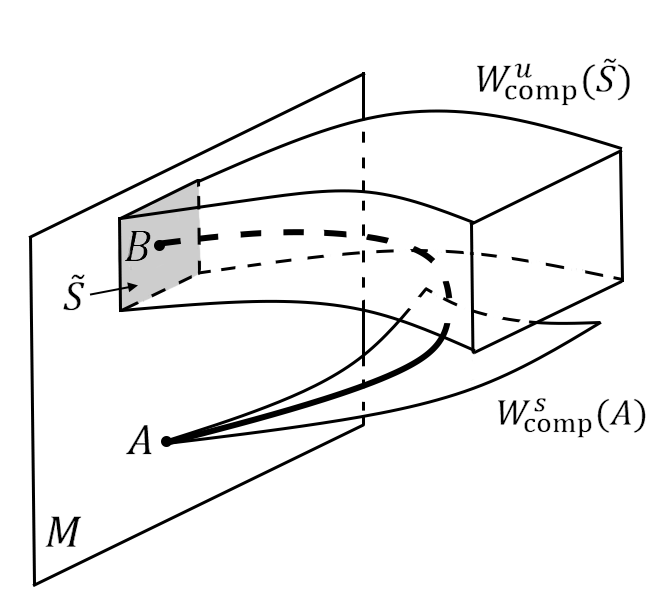}
    \caption{Schematic representation of the transversal intersection of the stable manifold $W^s_{\mathrm{comp}}(A)$ and unstable manifold $W^u_{\mathrm{comp}}(\tilde{S})$.}
    \label{fig:stable-unstable-transversal}
\end{figure}

\subsubsection{Transversal intersection}
The following Lemma claims that the manifolds $W^s_{\mathrm{comp}}(A)$ and $W^u_{\mathrm{comp}}(\tilde{S})$ intersect transversely in point $C$ (and, thus, in any point of $\Gamma$).

\begin{lemma}
The manifolds $W^s_{\mathrm{comp}}(A)$ and $W^u_{\mathrm{comp}}(\tilde{S})$  intersect transversely in point~$C$.
\end{lemma}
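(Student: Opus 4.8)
The plan is to reduce the transversality claim to a single codimension-one condition and then propagate it along $\Gamma$ by means of a scalar linear ODE. First I would record the dimensions: $W^s_{\mathrm{comp}}(A)$ is a piece of the two-dimensional global stable manifold of $A$, so $\dim T_C W^s_{\mathrm{comp}}(A)=2$, while $W^u_{\mathrm{comp}}(\tilde S)$ is a piece of the three-dimensional global unstable manifold of $\tilde S$, so $\dim T_C W^u_{\mathrm{comp}}(\tilde S)=3$. Since $2+3>4$, transversality in $\mathbb{R}^4$ is equivalent to $T_C W^s_{\mathrm{comp}}(A)+T_C W^u_{\mathrm{comp}}(\tilde S)=\mathbb{R}^4$. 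By Proposition~\ref{rm:inv-manifolds} the hyperplane $H:=\{(a,b,r,s):2r+s=0\}$ is invariant and contains $W^u_{loc}(\tilde S)$, hence also $W^u_{\mathrm{comp}}(\tilde S)$; as the latter is a three-dimensional manifold sitting inside the three-dimensional subspace $H$, we get $T_C W^u_{\mathrm{comp}}(\tilde S)=H$. Consequently transversality at $C$ is equivalent to the \emph{single} requirement $T_C W^s_{\mathrm{comp}}(A)\not\subset H$, i.e. to the existence of a tangent vector $w=(\delta a,\delta b,\delta r,\delta s)\in T_C W^s_{\mathrm{comp}}(A)$ with $2\delta r+\delta s\neq0$.

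Next I would track the linear functional $L(\delta a,\delta b,\delta r,\delta s):=2\delta r+\delta s$ along tangent vectors to $W^s$, which solve the variational equation of~\eqref{eq:4dim-dyn-sys-TFE-r-negative} along $\Gamma$. Differentiating $L$ along a variational solution $w(\xi)$ and substituting the variational equations yields
\begin{align*}
\frac{d}{d\xi}L(w)=-(v+a(\xi))\,L(w)-(s+2r)\,\delta a,
\end{align*}
and since $\Gamma$ lies in the invariant manifold $I_4$ (so $s+2r\equiv0$ along $\Gamma$) this collapses to the scalar linear equation $\tfrac{d}{d\xi}L(w)=-(v+a(\xi))L(w)$. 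Its solution $L(w(\xi))=L(w(\xi_*))\exp\left(-\int_{\xi_*}^\xi (v+a)\,d\tau\right)$ is a nonzero multiple of $L(w(\xi_*))$, so $L(w(\xi))$ either vanishes identically or never vanishes along $\Gamma$. This is precisely the mechanism that lets me transport the codimension-one condition between $C$ and the endpoint $A$.

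Finally I would argue by contradiction. Suppose $T_C W^s_{\mathrm{comp}}(A)\subset H$, i.e. $L$ vanishes on every tangent vector of $W^s$ at $C$. Each such vector extends to a variational solution $w(\xi)$ that decays as $\xi\to+\infty$, and these solutions span $T_{\gamma(\xi)}W^s$ at the corresponding point $\gamma(\xi)\in\Gamma$; by the scalar ODE above $L(w(\xi))\equiv0$, so $T_{\gamma(\xi)}W^s\subset H$ for all $\xi$. Letting $\xi\to+\infty$ and using that $W^s(A)$ is a $C^1$ manifold with $\gamma(\xi)\to A$, the tangent planes converge to $T_A W^s(A)=E^s(A)$, whence $E^s(A)\subset H$. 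But the basis~\eqref{eq:EsA} gives $L(1,1,-v,-v)=2(-v)+(-v)=-3v\neq0$, so $E^s(A)\not\subset H$ --- a contradiction. Hence $T_C W^s_{\mathrm{comp}}(A)\not\subset H$, the tangent spaces span $\mathbb{R}^4$, and the two manifolds meet transversely at $C$ (and, transversality being flow-invariant, at every point of $\Gamma$).

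I expect the main obstacle to be the two ``soft'' facts used at the endpoints rather than any heavy computation: that $T_C W^u_{\mathrm{comp}}(\tilde S)$ is \emph{exactly} the invariant hyperplane $H$ (which requires $W^u_{\mathrm{comp}}$ to be genuinely three-dimensional in a neighbourhood of $C$ and contained in $H$), and the convergence $T_{\gamma(\xi)}W^s\to E^s(A)$ as $\xi\to+\infty$ at the \emph{non-hyperbolic} fixed point $A$. Both follow from the $C^1$ invariant-manifold theory of~\cite{1970-HPS} already invoked for $W^s_{loc}(A)$ and $W^u_{loc}(\tilde S)$, together with the invariance of $\{2r+s=0\}$ from Proposition~\ref{rm:inv-manifolds}; the genuinely computational part --- the scalar ODE for $L$ and the single evaluation on $E^s(A)$ --- is elementary.
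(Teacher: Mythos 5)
Your argument is correct and follows essentially the same route as the paper: both reduce transversality to the single condition that the tangent plane of the stable manifold is not contained in the invariant hyperplane $\{s+2r=0\}$ carrying $W^u(\tilde S)$, verify this at $A$ via the evaluation $2(-v)+(-v)=-3v\neq0$ on $E^s(A)$ from~\eqref{eq:EsA}, and transport the condition along $\Gamma$ using the flow. The only difference is that where the paper invokes continuity of the tangent planes near $A$ together with the flow-invariance of transversality, you make the transport explicit through the scalar variational ODE $\frac{d}{d\xi}L(w)=-(v+a)L(w)$ (valid since $s+2r\equiv0$ on $\Gamma\subset I_4$), which is a correct and slightly more self-contained version of the same mechanism.
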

\begin{proof}
   Consider
    \begin{align*}
        W^u(\tilde{S}):=\{\phi_\xi(x)\in\mathbb{R}^4: \text{ for all }x\in W^u_{loc}(\tilde{S}), \xi\in[0,+\infty)\}.
    \end{align*}
    It is clear that $A\in \mathrm{Cl}(W^u(\tilde{S}))$ and $W^u(\tilde{S})\subset \{s+2r=0\}$. Note that
    \begin{align*}
        T_A(\{s+2r=0\})+T_A(W_{loc}^s(A))=\mathbb{R}^4.
    \end{align*}
    Hence, for any point $C^*=\phi_\xi(C)$ sufficiently close to $A$ (for some $\xi\in\mathbb{R}$), we obtain
    \begin{align*}    T_{C^*}(W^u(\tilde{S}))+T_{C^*}(W_{loc}^s(A))=\mathbb{R}^4.
    \end{align*}
    This implies that the intersection of the manifolds $W^s_{loc}(A)$ and $W^u(\tilde{S})$ is transversal in point $C^*$, hence the manifolds $W^s_{\mathrm{comp}}(A)$ and $W^u_{\mathrm{comp}}(\tilde{S})$ also intersect transversely in the point $C \in \Gamma$.
\end{proof}
\begin{remark}
    In Section~\ref{sec:proof-peaceman} we restrict ourselves to considering the systems~\eqref{eq:4dim-dyn-sys-TFE-r-negative} for $v$ sufficiently close to $v_1^*$. In this case, instead of considering the sets $\tilde{S}$ depending on $v$ (see formula~\eqref{eq:defts}), we define the unique set $S^*$ for all $v\in(v_1^*-\delta,v_1^*+\delta)$ for some small enough $\delta>0$:
    \begin{align}\label{eq:S*}
        S^*:=\{(a,b,0,0): a\in(3.9v^*_1,4.1v^*_1) , b\in\bigl(0.9(-8v^*_1+2),1.1(-8v^*_1+2)\bigr) \}.
    \end{align}
    By the same argument as before, the manifolds $W^s_{\mathrm{comp}}(A)$ and $W^u_{\mathrm{comp}}(S^*)$ intersect transversally. 
\end{remark}

\section{Analysis of the two-tubes IPM model}
\label{sec:proof-peaceman}

In this section we analyze the two-tubes IPM model~\eqref{eq:2tubes-diffusive-system-1}--\eqref{eq:peretok},~\eqref{eq:gravitational-Darcy-2tubes-c},~\eqref{eq:u1+u2=0}. First, in Section~\ref{subsec:Peaceman-TW-main} we study traveling wave solutions and prove Theorem~\ref{thm:TW-IPM}. Second, in Section~\ref{subsec:prop-terrace-IPM} we prove Theorem~\ref{thm:main-Peaceman} on the existence of the propagating terrace.

\subsection{Traveling wave solutions}
\label{subsec:Peaceman-TW-main}
\subsubsection{Traveling wave dynamical system. Derivation}
\label{subsec:Peaceman-TW}
We are looking for a traveling wave solution of the system \eqref{eq:2tubes-diffusive-system-1}--\eqref{eq:peretok},~\eqref{eq:gravitational-Darcy-2tubes-c},~\eqref{eq:u1+u2=0}
(for simplicity we denote the new unknown functions of one variable by the same notation)
\begin{align*}
(c_1,c_2, u_1, u_2,q)(t,y)=(c_1,c_2, u_1, u_2,q)(\xi),\qquad \xi=y-vt,
 \end{align*}
connecting the states $(c_1^-, c_2^-,u_1^-,u_2^-,q^-)$ and $(c_1^+, c_2^+,u_1^+,u_2^+,q^+)$, i.e. 
\begin{align*}
(c_1,c_2, u_1, u_2,q)(\pm\infty)=(c_1^\pm,c_2^\pm, u_1^\pm, u_2^\pm,q^\pm).
\end{align*}

In variable $\xi$ the system \eqref{eq:2tubes-diffusive-system-1}--\eqref{eq:peretok},~\eqref{eq:gravitational-Darcy-2tubes-c},~\eqref{eq:u1+u2=0} consists of the two equations \eqref{eq:dynsys-1}--\eqref{eq:dynsys-2} as before, and the additional ones:
\begin{align}
\label{eq:dynsys-3}
    \partial_\xi q& = u_1 - u_2 + c_1 - c_2,&
    \partial_\xi u_1 &= q/l^2,&
    \partial_\xi u_2 &= -q/l^2.
\end{align}

Using the unknown variables $(a,b,r,s)$, defined in formula~\eqref{eq:new-variables}, we write the system~\eqref{eq:dynsys-1}--\eqref{eq:dynsys-2} and~\eqref{eq:dynsys-3} in the form of a traveling wave dynamical system~\eqref{eq:6dim-dyn-sys-Peaceman}, as is stated in the following Lemma.

\begin{lemma}
The traveling wave dynamical system for the  two-tubes IPM equations~\eqref{eq:2tubes-diffusive-system-1}--\eqref{eq:peretok},~\eqref{eq:gravitational-Darcy-2tubes-c},~\eqref{eq:u1+u2=0} takes the following form $\mathrm{(}$in variables~\eqref{eq:new-variables}$\mathrm{)}$:
\begin{align}
\label{eq:6dim-dyn-sys-Peaceman}
    \begin{split}
        \partial_{\xi}a&=r,
        \\
        \partial_\xi b&=s,
        \\
        \partial_\xi r&=-vr+u_1s+a\cdot \frac{|q|}{l^2},
        \\
        \partial_\xi s&=-vs+u_1r+a\cdot \frac{q}{l^2},
        \\
        \partial_\xi u_1&=\frac{q}{l^2},
        \\
        \partial_\xi q&=2u_1+a.
    \end{split}
\end{align}
\end{lemma}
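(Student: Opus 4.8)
The plan is to repeat, with the extra pressure and velocity variables carried along, the elementary computation already used for the TFE system in Lemma~\ref{lm:TW-TFE}. First I would record the two trivial identities $\partial_\xi a = r$ and $\partial_\xi b = s$, which are immediate from the definitions~\eqref{eq:new-variables}. The two genuinely new equations, for $\partial_\xi u_1$ and $\partial_\xi q$, come almost for free from~\eqref{eq:dynsys-3}: the middle relation there already gives $\partial_\xi u_1 = q/l^2$, while for $\partial_\xi q = u_1 - u_2 + c_1 - c_2$ I would invoke the constraint~\eqref{eq:u1+u2=0}, so that $u_1 - u_2 = 2u_1$, together with $c_1 - c_2 = a$, yielding $\partial_\xi q = 2u_1 + a$.

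For the $r$ and $s$ equations I would add and subtract the two transport equations~\eqref{eq:dynsys-1}--\eqref{eq:dynsys-2}, again using $u_2 = -u_1$. Summing gives $u_1 c_1 + u_2 c_2 = u_1(c_1 - c_2) = u_1 a$, so that after expanding $\partial_\xi(u_1 a) = (q/l^2)a + u_1 r$ one reads off $\partial_\xi s = -vs + u_1 r + a\,q/l^2$. Subtracting gives $u_1 c_1 - u_2 c_2 = u_1(c_1 + c_2) = u_1 b$ and introduces the interflow term $-2f$; expanding $\partial_\xi(u_1 b) = (q/l^2)b + u_1 s$ then leaves $\partial_\xi r = -vr + u_1 s + (q/l^2)b + 2f$.

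The only real work --- the analogue of the $-a|r|/2$ simplification in the proof of Lemma~\ref{lm:TW-TFE} --- is to show that the combination $(q/l^2)b + 2f$ collapses to $a\,|q|/l^2$. The key preliminary step is to express the transversal velocity $u_T$ through $q$: the relation~\eqref{eq:gravitational-Darcy-2tubes-c} (equivalently~\eqref{eq:IPM-2-tubes-w}) gives $\partial_\xi u_1 = q/l^2 = -u_T/l$, hence $u_T = -q/l$, so that the sign of $u_T$ is opposite to that of $q$. Substituting this into the upwind definition~\eqref{eq:peretok} turns $f$ into $-(q/l^2)c_1$ when $q \leq 0$ and $-(q/l^2)c_2$ when $q \geq 0$. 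I would then check the two sign cases separately: for $q \leq 0$, $(q/l^2)(c_1+c_2) - (2q/l^2)c_1 = (q/l^2)(c_2 - c_1) = a\,|q|/l^2$, and for $q \geq 0$, $(q/l^2)(c_1+c_2) - (2q/l^2)c_2 = (q/l^2)(c_1 - c_2) = a\,|q|/l^2$; in both cases the sign of $q$ converts $\pm q$ into $|q|$. This produces $\partial_\xi r = -vr + u_1 s + a\,|q|/l^2$ and completes the system~\eqref{eq:6dim-dyn-sys-Peaceman}. The main (and only mild) obstacle is precisely this sign bookkeeping for the upwind flux $f$ via the substitution $u_T = -q/l$; everything else is a mechanical rewriting in the variables~\eqref{eq:new-variables}, exactly parallel to the TFE derivation.
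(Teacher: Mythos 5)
Your proposal is correct and follows essentially the same route as the paper: add and subtract the two transport equations using $u_2=-u_1$, read off the $u_1$ and $q$ equations from the Darcy/incompressibility relations, and collapse the upwind flux term by a case analysis on the sign of $q$ (equivalently of $u_T=-q/l$, i.e.\ of $\partial_\xi u_1$), which is exactly the paper's simplification of $-\partial_\xi u_1\cdot(c_1+c_2)-2f=-a|q|/l^2$. The only cosmetic difference is that you carry the combination $(q/l^2)b+2f$ on the right-hand side rather than its negative on the left, and you phrase the sign cases in terms of $q$ instead of $\partial_\xi u_1$.
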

\begin{proof}
As before it is clear that  $\partial_\xi a=r$ and $\partial_\xi b=s$. Let's show how to obtain the equation on 
$\partial_\xi s$. Summing up the equations~\eqref{eq:dynsys-1}--\eqref{eq:dynsys-2}, we get:
\begin{align*}
    -v\partial_\xi (c_1+c_2)&=-\partial_\xi(u_1c_1+u_2c_2)+\partial_{\xi\xi}(c_1+c_2),
    \\
    -v\partial_\xi (c_1+c_2)&=-\partial_\xi u_1\cdot (c_1-c_2)- u_1\cdot \partial_\xi(c_1-c_2)+\partial_{\xi\xi}(c_1+c_2).
\end{align*}
Using the relations~\eqref{eq:gravitational-Darcy-2tubes-c} and~\eqref{eq:new-variables}, we get the necessary equation on $\partial_\xi s$.

Now let's show how to obtain the equation on 
$\partial_\xi r$. Subtracting the equation~\eqref{eq:dynsys-2} from~\eqref{eq:dynsys-1}, we get:
\begin{align*}
    -v\partial_\xi (c_1-c_2)&=-\partial_\xi(u_1c_1-u_2c_2)+\partial_{\xi\xi}(c_1-c_2)-2f,
    \\
    -v\partial_\xi (c_1-c_2)&=-\partial_\xi u_1\cdot (c_1+c_2)-u_1\cdot \partial_\xi (c_1+c_2)+\partial_{\xi\xi}(c_1-c_2)-2f.
\end{align*}
The sum $(-\partial_\xi u_1\cdot (c_1+c_2)-2f)$ can be simplified:
\begin{align*}
    -\partial_\xi u_1\cdot (c_1+c_2)-2f&=
    \begin{cases}
        -\partial_\xi u_1 \cdot (c_2-c_1),& \partial_\xi u_1<0,\\
    -\partial_\xi u_1 \cdot (c_1-c_2),&\partial_\xi u_1\geq 0,
    \end{cases}
    =
    \begin{cases}
        \frac{aq}{l^2},& q<0,\\
        -\frac{aq}{l^2},&q\geq 0,
    \end{cases}
    =-\frac{a|q|}{l^2}.
\end{align*}
Substituting the new variables~\eqref{eq:new-variables} into the equation~\eqref{eq:4-eq}, we get the necessary equation on $\partial_\xi r$.
\end{proof}

Assume $q\geq0$. We rewrite the system~\eqref{eq:6dim-dyn-sys-Peaceman} using the following change of variables: 
    \begin{align*}
    r_1:=r-u_1a-\frac{a^2}{2}, 
    \qquad 
    s_1:=s-u_1a-\frac{a^2}{2},
    \qquad 
    q_1:=\frac{q}{l}.
    \end{align*}
    In new variables the system~\eqref{eq:6dim-dyn-sys-Peaceman} becomes a slow-fast system in the standard form (as $l\to0$):
\begin{align}
\label{eq:6dim-dyn-sys-Peaceman-ver2}
    \begin{split}
        \partial_{\xi}a&=r_1+u_1a+\frac{a^2}{2},
        \\
        \partial_\xi b&=s_1+u_1a+\frac{a^2}{2},
        \\
        \partial_\xi r_1&=-v\left(r_1+u_1a+\frac{a^2}{2}\right)+u_1(s_1-r_1)-a\left(r_1+u_1a+\frac{a^2}{2}\right),
        \\
        \partial_\xi s_1&=-v\left(s_1+u_1a+\frac{a^2}{2}\right)-a\left(r_1+u_1a+\frac{a^2}{2}\right),
        \\
        l\cdot \partial_\xi u_1&=q_1,
        \\
        l \cdot \partial_\xi q_1&=2u_1+a.
    \end{split}
\end{align}
If we formally consider $l=0$ in~\eqref{eq:6dim-dyn-sys-Peaceman-ver2}, we get $u_1=-a/2=(c_2-c_1)/2$ and this expression coincides with the velocities in TFE model~\eqref{eq:gravitational-Darcy-TFE-2tubes}. Substituting this relation into the first four equations of the system~\eqref{eq:6dim-dyn-sys-Peaceman-ver2}, we obtain the traveling wave dynamical system for the TFE model~\eqref{eq:4dim-dyn-sys-TFE} (note that formally we have $r_1=r$ and $s_1=s$) under the restriction $r\leq0$. 

Notice that the same reasoning for $q\leq0$, leads to the traveling wave dynamical system for the TFE model~\eqref{eq:4dim-dyn-sys-TFE} under the restriction $r\geq0$. 

This puts us into the framework of geometric singular perturbation theory (e.g. see the original work of Fenichel~\cite{fenichel-1979}), and determines our general strategy: knowing the existence of heteroclinic trajectory for the ``unperturbed'' system for $l=0$ (two-tubes TFE equations), we prove that the heteroclinic trajectory persists under perturbation for small enough $l>0$ (two-tubes IPM equations).

The set of fixed points for the system~\eqref{eq:6dim-dyn-sys-Peaceman-ver2} for any $v\in\mathbb{R}$ and $l>0$  is
\begin{align*}
    M:=\bigl\{(a,b,r_1,s_1,u_1,q_1)\in\mathbb{R}^6: r_1=s_1=q_1=0, u_1=-\frac{a}{2}\bigr\}.
\end{align*}

\subsubsection{Traveling wave dynamical system. Heteroclinic trajectories}
\label{subsec:thm4-proof}

We are looking for all values of $a_0, b_0, v\in\mathbb{R}$ such that there exist a heteroclinic trajectory $(a(\xi), b(\xi),r_1(\xi),s_1(\xi),u_1(\xi),q_1(\xi))$ of the dynamical system~\eqref{eq:6dim-dyn-sys-Peaceman-ver2} that connects either a pair of critical points
\begin{align}
\begin{split}
    \label{eq:hetero-11-peaceman}
    (a,b,r_1,s_1,u_1,q_1)(-\infty)&=(a_0,b_0,0,0,-\frac{a_0}2,0) \\  (a,b,r_1,s_1,u_1,q_1)(+\infty)&=(0,2,0,0,0,0),
\end{split}
\end{align}
 or points
\begin{align}
\begin{split}
    \label{eq:hetero--1-1-peaceman}
    (a,b,r_1,s_1,u_1,q_1)(-\infty)&=(0,-2,0,0,0,0) \\
    (a,b,r_1,s_1,u_1,q_1)(+\infty)&=(a_0,b_0,0,0,-\frac{a_0}2,0).
\end{split}
\end{align}

In order to prove Theorem~\ref{thm:TW-IPM}, we provide its reformulation in terms of heteroclinic trajectories for the traveling wave dynamical system~\eqref{eq:6dim-dyn-sys-Peaceman-ver2}. 

\begin{theorem}
    \label{thm:Peaceman-reformulated}
    The following statements are valid:

    {\bf Item 1.} There exist $l_0, \delta>0$ such that for any $v\in(v_1^*-\delta,v_1^*+\delta)$ and for any $l\in[0,l_0)$ there exists a pair $(a_0,b_0)=(a_0(v,l),b_0(v,l))$ such that there exists a heteroclinic trajectory $\Gamma_{v, l}$ of the system~\eqref{eq:6dim-dyn-sys-Peaceman-ver2}, satisfying~\eqref{eq:hetero-11-peaceman}. Moreover, we have:
    \begin{itemize}
        \item[(a)] Continuous dependence:  $a_0$ and $b_0$ depend continuously on $(v,l)$ at any point $(v,l)\in(v_1^*-\delta,v_1^*+\delta)\times[0,l_0)$.
        \item[(b)] Consistency: $q_1(\xi)\geq0$, $\xi\in\mathbb{R}$, on $\Gamma_{v, l}$.
    \end{itemize}
    
    {\bf Item 2.} There exist $l_0, \delta>0$ such that for any $v\in(v_2^*-\delta,v_2^*+\delta)$ and for any  $l\in[0,l_0)$ there exists a pair $(a_0,b_0)=(a_0(v,l),b_0(v,l))$ such that there exists a heteroclinic trajectory $\Gamma'_{v, l}$ of the system~\eqref{eq:6dim-dyn-sys-Peaceman-ver2}, satisfying~\eqref{eq:hetero--1-1-peaceman}. Moreover, we have:
     \begin{itemize}
        \item[(a)]  Continuous dependence:  $a_0$ and $b_0$ depend continuously on $(v,l)$ at any point $(v,l)\in(v_2^*-\delta,v_2^*+\delta)\times[0,l_0)$. 
        \item[(b)] Consistency: $q_1(\xi)\geq0$, $\xi\in\mathbb{R}$, on $\Gamma'_{v, l}$.
    \end{itemize}
\end{theorem}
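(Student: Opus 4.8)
The plan is to exploit the slow--fast structure of~\eqref{eq:6dim-dyn-sys-Peaceman-ver2} to reduce the six-dimensional problem to the four-dimensional TFE system, and then to transport the heteroclinic orbit, together with its transversality from Section~\ref{subsec:transverse-TFE}, across the singular limit $l\to0$ by geometric singular perturbation theory. I would carry out Item~1 (the connection~\eqref{eq:hetero-11-peaceman} to $A=(0,2,0,0,0,0)$) in detail; Item~2 follows from the analogous construction for the connection~\eqref{eq:hetero--1-1-peaceman}. In the region under consideration $q\ge0$, so~\eqref{eq:6dim-dyn-sys-Peaceman-ver2} is smooth, with slow variables $(a,b,r_1,s_1)$, fast variables $(u_1,q_1)$, and critical manifold $\mathcal{M}_0=\{q_1=0,\ u_1=-a/2\}$. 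Passing to the fast time $\tau=\xi/l$, the layer problem $\partial_\tau u_1=q_1$, $\partial_\tau q_1=2u_1+a$ (slow variables frozen) has transverse linearization $\left(\begin{smallmatrix}0&1\\2&0\end{smallmatrix}\right)$ with eigenvalues $\pm\sqrt2$, so $\mathcal{M}_0$ is normally hyperbolic of saddle type. Setting $q_1=0$, $u_1=-a/2$ in the slow equations returns exactly~\eqref{eq:4dim-dyn-sys-TFE-r-negative}, as noted after~\eqref{eq:6dim-dyn-sys-Peaceman-ver2}.

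By Fenichel's theorem, for all sufficiently small $l>0$ there is a locally invariant slow manifold $\mathcal{M}_l$, a graph $u_1=-a/2+O(l)$, $q_1=O(l)$ that is $C^1$-$O(l)$-close to $\mathcal{M}_0$; the manifold of equilibria $M$ --- in particular the endpoints $A$ and $B=(4v,-8v+2,0,0,-2v,0)$ --- consists of exact fixed points for every $l$ and lies on $\mathcal{M}_l$. Restricting~\eqref{eq:6dim-dyn-sys-Peaceman-ver2} to $\mathcal{M}_l$ gives a four-dimensional flow that is a $C^1$-$O(l)$ regular perturbation of~\eqref{eq:4dim-dyn-sys-TFE-r-negative}, uniformly for $v$ near $v_1^*$. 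This reduces the statement to a four-dimensional perturbation problem whose unperturbed flow is the TFE system, for which the heteroclinic $\Gamma$ from $B$ to $A$ (with $r<0$) is known explicitly.

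For the persistence step I would use Section~\ref{subsec:transverse-TFE}, where $\Gamma$ is realized as a transversal intersection $W^s_{\mathrm{comp}}(A)\pitchfork W^u_{\mathrm{comp}}(S^*)$ at a point $C$. Transversal intersections of compact pieces of stable and unstable manifolds are $C^1$-stable and vary continuously with parameters; applying this to the reduced flow on $\mathcal{M}_l$ shows that for all small $l$ and all $v$ near $v_1^*$ the perturbed manifolds still meet transversally, producing a heteroclinic $\Gamma_{v,l}$ of~\eqref{eq:6dim-dyn-sys-Peaceman-ver2} from a point $(a_0,b_0)(v,l)\in M$ to $A$. Claim~(a), the continuous dependence of $(a_0,b_0)$ on $(v,l)$, follows from the continuous dependence of the intersection point. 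For claim~(b), along $\mathcal{M}_l$ the relation $l\,\partial_\xi u_1=q_1$ with $u_1=-a/2+O(l)$ gives $q_1=-\tfrac l2 r+O(l^2)$, which is nonnegative on $\Gamma_{v,l}$ since $r<0$ there; hence $q\ge0$ and $\Gamma_{v,l}$ is a genuine orbit of the original, non-smooth system~\eqref{eq:6dim-dyn-sys-Peaceman}.

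The main obstacle I anticipate is the interaction between the GSPT reduction and the non-isolated equilibria: since both ends of $\Gamma$ sit on the center manifold $M$ of fixed points, a plain saddle--saddle (Melnikov-type) connection argument is unavailable, and persistence must be routed through the transversal intersection with the entire family $S^*$, with Fenichel's estimates checked to remain uniform up to the equilibrium endpoints. A secondary difficulty is the sign of $q_1$ near those endpoints, where $r$ and the $O(l^2)$ correction vanish simultaneously, so the inequality $q_1\approx-\tfrac l2 r\ge0$ must be made uniform rather than merely pointwise.
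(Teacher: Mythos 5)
Your overall architecture coincides with the paper's: Fenichel reduction of~\eqref{eq:6dim-dyn-sys-Peaceman-ver2} onto a slow manifold $K_l$ (the paper's Lemma~\ref{lem:He}), identification of the reduced flow at $l=0$ with~\eqref{eq:4dim-dyn-sys-TFE-r-negative}, persistence of the heteroclinic via the transversal intersection $W^s_{\mathrm{comp}}(A)\cap W^u_{\mathrm{comp}}(S^*)$ from Section~\ref{subsec:transverse-TFE} together with the continuity of the unstable lamination over the family of equilibria (this is exactly how the paper handles the non-isolated endpoints you worry about, via \cite[Theorem 4.1(f)]{1970-HPS} and the fact that $A^*\cup S^*$ consists of exact equilibria for every $l$, so $S^*_{v,l}=S^*$). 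Steps 1--3 of your proposal are therefore essentially the paper's Steps 1--3.

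The genuine gap is in your consistency step (b), and you have correctly diagnosed it yourself without closing it. The Fenichel expansion $q_1=-\tfrac{l}{2}r_1+O(l^2)$ determines the sign of $q_1$ only where $|r_1|$ dominates the $O(l^2)$ error; near the endpoints $A$ and $B(v,l)$ both $r_1$ and the correction tend to zero, so ``$q_1\ge 0$ since $r<0$'' does not follow, and no uniform version of the estimate is supplied. Moreover, even the claim $r_1<0$ on the \emph{perturbed} orbit $\Gamma_{v,l}$ is asserted rather than proved; the paper devotes the first half of its Step~4 to this, splitting $\Gamma_{v,l}$ into three regimes (near $B(v,l)$, using that the tangent direction~\eqref{eq:13.1} of $W^u_{loc}$ has nonzero $r_1$-component; a compact middle piece, by uniform closeness to $\Gamma_{v_1^*,0}$; and near $A$, using that $T_AW^s_{loc}(A)\subset\{r_1=-v_1^*a\}$ so that $r_1=k(\xi)a$ with $k$ bounded away from zero, combined with a sign-preservation argument for $a$). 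Having established $r_1\le 0$ globally, the paper then avoids your uniformity problem entirely by using the \emph{exact} identity
\begin{equation*}
q_1 = -\tfrac{1}{2}l\, r_1 - \tfrac{1}{4}l^2 a\, \dot q_1 + \tfrac{1}{2} l^2 \ddot q_1,
\end{equation*}
evaluated at a putative interior point where $q_1$ attains a negative infimum: there $\dot q_1=0$, $\ddot q_1\ge 0$ and $r_1\le 0$ force the right-hand side to be nonnegative while the left-hand side is negative, a contradiction. To complete your proof you would need to import both of these ingredients: the three-regime sign argument for $r_1$ on $\Gamma_{v,l}$, and the exact-identity minimum principle (or some equally uniform replacement) for the sign of $q_1$.
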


\begin{remark}
\label{rmk:limits}
    Note that for $l=0$ the system~\eqref{eq:6dim-dyn-sys-Peaceman-ver2} coincides with the system~\eqref{eq:4dim-dyn-sys-TFE}. This allows us to formally  define $a_0(v,0)$ and $b_0(v,0)$ from Theorem~\ref{thm:TFE-reformulated}. In particular,  Theorem~\ref{thm:Peaceman-reformulated}, Item~1, claims the continuous dependence of $a_0$ and $b_0$ at the point $(v,0)$,  $v\in (v_1^*-\delta,v_1^*+\delta)$. Thus, as $l\to0$ we have:
    $$
    \mbox{$q_1\geq0$: $a_0(v,l)\to 4v$ and $b_0(v,l)\to -8v-2$ by Theorem~\ref{thm:TFE-reformulated}, Item 3, case $r(\xi)<0$.}
    $$
    
    Analogously, Theorem~\ref{thm:Peaceman-reformulated}, Item~2, claims the continuous dependence of $a_0$ and $b_0$ at the point $(v,0)$,  $v\in (v_2^*-\delta,v_2^*+\delta)$. Thus, as $l\to0$ we have:
    $$
    \mbox{$q_1\geq0$: $a_0(v,l)\to 4v$ and $b_0(v,l)\to -8v+2$ by Theorem~\ref{thm:TFE-reformulated}, Item 1, case $r(\xi)<0$.}
    $$   
\end{remark}

\begin{remark}
\label{rmk:uniqueness}
We do not claim uniqueness of heteroclinic solutions of \eqref{eq:6dim-dyn-sys-Peaceman} due to possibility of trajectories with not constant sign of $q$, which is not covered by our techniques. However, we expect that corresponding heteroclinic of \eqref{eq:6dim-dyn-sys-Peaceman} is unique.
\end{remark}

    The analogue of Theorem~\ref{thm:Peaceman-reformulated} is also valid for the analogue of the system~\eqref{eq:6dim-dyn-sys-Peaceman-ver2} if we assume $q_1\leq0$ instead of $q_1\geq0$.

    Using formulas~\eqref{eq:new-variables}, $c_1=(a+b)/2$, $c_2=(b-a)/2$,~\eqref{eq:hetero-11-peaceman},~\eqref{eq:hetero--1-1-peaceman} and Remark~\ref{rmk:limits}  one immediately gets Theorem~\ref{thm:TW-IPM} as a consequence of Theorem~\ref{thm:Peaceman-reformulated}. Let us prove Theorem~\ref{thm:Peaceman-reformulated}.

\begin{proof}
    We prove Theorem~\ref{thm:Peaceman-reformulated}, Item 1, in four steps.

\textbf{Step 1:} We apply the geometric singular perturbation theory to the system~\eqref{eq:6dim-dyn-sys-Peaceman-ver2}, and reduce the problem to studying the heteroclinic trajectories in a 4-dimensional dynamical system~\eqref{eq:4-dim-F-ve}, described in Lemma~\ref{lem:He}. Notice that this system~\eqref{eq:4-dim-F-ve} for $l>0$ is a small perturbation of the dynamical system for TFE model ($l=0$), studied in detail in Section~\ref{sec:TW-TFE-main}.

\textbf{Step 2:} Since the heteroclinic trajectory of the unperturbed system ($l=0$) is a transverse intersection of some suitable stable and unstable manifolds (see Section~\ref{subsec:transverse-TFE}), then it persists under small perturbation for $l>0$. This guarantees the existence of a heteroclinic trajectory for~\eqref{eq:4-dim-F-ve}, and thus, for~\eqref{eq:6dim-dyn-sys-Peaceman-ver2} too.

\textbf{Step 3:} We prove the continuous dependence of the limiting points of the heteroclinic trajectory on the parameters $v$ and $l$ (Items 1a, 2a of the Theorem~\ref{thm:Peaceman-reformulated}).

\textbf{Step 4:} We prove that the found heteroclinic trajectory, indeed, satisfies the original assumption $q_1\geq0$ (Items 1b, 2b of the Theorem~\ref{thm:Peaceman-reformulated}).

The proof of Theorem~\ref{thm:Peaceman-reformulated}, Item 2, is analogous to Item 1, so we omit it.

\subsubsection*{Step 1: application of geometric singular perturbation theory}
In this section we apply the geometric singular perturbation theory (GSPT) for the system~\eqref{eq:6dim-dyn-sys-Peaceman-ver2}. This step allows to reduce the dimension of the dynamical system under consideration (from 6-dimensional to 4-dimensional). The original ideas on GSPT are due to N. Fenichel~\cite{fenichel-1979}, the more recent advances on the theory can be found in books~\cite{2015-Kuehn} and~\cite{2020-Wechselberger} (and references therein).

The system~\eqref{eq:6dim-dyn-sys-Peaceman-ver2}  has a structure of a ``slow-fast'' system (we denote $\partial_\xi$ by~$\dot{\phantom{a}}$):
\begin{align*}
\begin{cases}
    \quad\dot{X}&= F(X,Y),\\
    l\cdot \dot{Y}&=AY - G(X),
\end{cases}
\end{align*}
where 
$X=\begin{pmatrix}
    a&b&r_1&s_1
\end{pmatrix}^T\in\mathbb{R}^4$ and $Y=\begin{pmatrix}
    u_1 &q_1
\end{pmatrix}^T\in\mathbb{R}^2$, 
$A=\begin{pmatrix}
    0&1
    \\
    2&0
\end{pmatrix}$; $F(X,Y)$ and $G(X)$ are the concrete vector functions. While in system~\eqref{eq:6dim-dyn-sys-Peaceman-ver2} variable $v$ is a parameter, in this subsection it is convenient to include $v$ as a constant variable. Thus, we consider an extended system:
\begin{align}
\label{eq:slow-system}
\begin{cases}
    \quad\dot{v}&= 0,\\
    \quad\dot{X}&= F(X,Y, v),\\
    l\cdot \dot{Y}&=AY - G(X).
\end{cases}
\end{align}
System~\eqref{eq:slow-system} is usually called ``slow'' system. For any $l>0$ we obtain an equivalent ``fast'' system through the change of variables $\eta=\xi/ l$ (we denote $\partial_\eta$ by~$'$):
\begin{align}
\label{eq:fast-system}
\begin{cases}
     v'&= 0,
     \\
     X'&= l\cdot F(X,Y, v),
     \\
    Y'&=\quad AY - G(X).
\end{cases}
\end{align}
The set of fixed points of the fast system~\eqref{eq:fast-system} for $l=0$ is called a \textit{critical manifold}. In our case for the fast-slow system~\eqref{eq:slow-system}--\eqref{eq:fast-system} the critical manifold is:
\begin{align*}
    K=\{(v, X,Y)\in\mathbb{R}^7:AY=G(X)\},\qquad \mathrm{codim} (K)=2.
\end{align*}
    The critical  manifold $K$ is \textit{normally hyperbolic} since the matrix of the first derivatives $D_Y(AY-G(X))\rvert_K=A$ has two eigenvalues $\lambda=\pm\sqrt{2}$ with non-zero real part.

Consider a compact submanifold $K_0\subset K$. Due to normal hyperbolicity of $K$, we can apply Fenichel's result~\cite[Theorem 9.1]{fenichel-1979}. Thus, there exists a manifold $K_l \subset \mathbb{R}^7$ which is normally hyperbolic locally invariant under the flow of the system~\eqref{eq:slow-system}, diffeomorphic to~$K_0$, and for some  $H \in C^1$ the following inclusion is valid
    \begin{align*}
        K_l\subset\{(v,X,Y)\subset\mathbb{R}^7: AY=G(X)+l\cdot H(X, v, l)\}.
    \end{align*}
As we will see below, in order to prove the existence of the heteroclinic trajectory, it is enough to restrict ourselves to the analysis of the dynamics inside the invariant manifold~$K_l$.

\begin{lemma}\label{lem:He}
    Let $(v(\xi), X(\xi), Y(\xi)) \subset K_l$ be a trajectory of \eqref{eq:slow-system}, then there exists a function $F_{v, l}:\mathbb{R}^4 \to \mathbb{R}^4$ such that
    \begin{enumerate}
        \item $v(\xi) \equiv v^*$ for some $v^*\in\mathbb{R}$, and 
        \begin{align}
        \label{eq:4-dim-F-ve}
            \dot{X} = F_{v^*, l}(X).
        \end{align}
        \item $F_{v,0}$ coincides with the right-hand side of \eqref{eq:4dim-dyn-sys-TFE-r-negative}.
    \end{enumerate}
    Moreover, for $v$ close to $v_1^*$ and small enough $l>0$ the following holds:
    \begin{enumerate}\setcounter{enumi}{2}
        \item $F_{v, l}(X) = 0$ for $X \in A^* \cup S^*$, where $A^*$ is defined by \eqref{eq:defta}, and $S^*$ is defined by \eqref{eq:S*}.
        \item $F_{v,l}$ continuously depends on $(v,l)$ in the $C^1_{loc}$-topology.
    \end{enumerate}
\end{lemma}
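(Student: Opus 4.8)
The plan is to exploit the Fenichel reduction that has just been set up and to treat the four items in order, since each reduces to restriction of the flow to the slow manifold $K_l$. First I would prove Item~1: because $\dot v=0$ in \eqref{eq:slow-system}, every trajectory keeps $v(\xi)\equiv v^*$ constant, and since $A$ is invertible (eigenvalues $\pm\sqrt2$) the inclusion $K_l\subset\{AY=G(X)+l\,H(X,v,l)\}$ can be solved as a graph $Y=h(X,v,l):=A^{-1}\bigl(G(X)+l\,H(X,v,l)\bigr)$. On $K_l$ the fast variable $Y$ is thus slaved to $X$, so substituting $Y=h(X,v^*,l)$ into the $\dot X$-block of \eqref{eq:slow-system} closes the system and produces $\dot X=F\bigl(X,h(X,v^*,l),v^*\bigr)=:F_{v^*,l}(X)$, which is exactly \eqref{eq:4-dim-F-ve}.

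Item~2 is then a direct substitution with no estimates. At $l=0$ one has $h(X,v,0)=A^{-1}G(X)$; using $G(X)=(0,-a)^T$ and the explicit $A^{-1}$ gives $u_1=-a/2$, $q_1=0$. Feeding $u_1=-a/2$ and $q_1=0$ into the first four lines of \eqref{eq:6dim-dyn-sys-Peaceman-ver2}, the combinations $r_1+u_1a+a^2/2$ and $s_1+u_1a+a^2/2$ collapse to $r_1$ and $s_1$, and a line-by-line comparison recovers the right-hand side of \eqref{eq:4dim-dyn-sys-TFE-r-negative}, so $F_{v,0}$ coincides with it.

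For Item~3 the key observation is that the equilibrium set of the full system,
\[
M=\bigl\{(a,b,r_1,s_1,u_1,q_1):\,r_1=s_1=q_1=0,\;u_1=-a/2\bigr\},
\]
is independent of $l$ and $v$, and its projection onto $X$-space is the whole plane $\{(a,b,0,0)\}$, which contains both $A^*$ (defined in \eqref{eq:defta}) and $S^*$ (defined in \eqref{eq:S*}). Each such point lies on the critical manifold $K_0$, since there $AY-G(X)=0$. I would then argue that every equilibrium sitting in the tubular neighbourhood on which Fenichel's theorem is applied must lie on $K_l$: because $K$ is normally hyperbolic of saddle type, $K_l$ is the maximal invariant set of the flow in that neighbourhood, and a single equilibrium is a bounded invariant set, hence contained in $K_l$. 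Consequently $h(X,v,l)=(-a/2,0)$ on $M$ (equivalently $H$ vanishes there), so each point of $A^*\cup S^*$ is a genuine zero of $F_{v,l}$. Item~4 then follows from the regularity part of Fenichel's theorem \cite{fenichel-1979}: $K_l$ and the graph map $h$ depend $C^1$ (indeed $C^r$) on $(v,l)$ over the relevant compact region, uniformly up to $l=0$, so $F_{v,l}(X)=F\bigl(X,h(X,v,l),v\bigr)$ depends continuously on $(v,l)$ in the $C^1_{loc}$-topology.

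The genuinely delicate point is Item~3: one must guarantee that the equilibria persist \emph{exactly} on $K_l$, so that $A^*$ and $S^*$ remain true zeros of $F_{v,l}$ rather than merely $O(l)$-approximate ones, because the persistence-of-transverse-intersection argument of Section~\ref{subsec:transverse-TFE} needs honest fixed points with honest local stable and unstable manifolds. This rests on the maximal-invariant-set characterisation of the saddle-type Fenichel manifold, and the accompanying care is to check that the neighbourhood in which $K_l$ is maximal, together with the diffeomorphism onto $K_0$, can be chosen uniformly for $v$ near $v_1^*$ and small $l$ --- which is precisely what underpins the uniform $C^1$ dependence asserted in Item~4.
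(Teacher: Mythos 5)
Your proposal is correct and follows essentially the same route as the paper: constancy of $v$, the graph representation $Y=A^{-1}\bigl(G(X)+l\,H(X,v,l)\bigr)$ of $K_l$ defining $F_{v,l}$, the observation that the equilibria over $A^*\cup S^*$ are invariant sets remaining in a small neighbourhood of the normally hyperbolic manifold $K_l$ and hence must lie on it, and smoothness of $F$, $G$, $H$ for the $C^1_{loc}$ dependence. Your explicit computation in Item~2 and the maximal-invariant-set phrasing of Item~3 merely spell out what the paper's proof leaves implicit.
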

\begin{proof} 
    Equality $v(\xi) \equiv v^*$ for some $v^*\in\mathbb{R}$ is a straightforward consequence of~\eqref{eq:slow-system}. Since $K_l$ is invariant and determined by relation $ AY=G(X)+l\cdot H(X,v^*, l)$, we define 
    $$
    F_{v^*,l}(X) := F(X, A^{-1}(G(X) + l\cdot  H(X,v^*,l)), v^*),  
    $$
which guarantees Item 1. Note that $F_{v,0}(X) = F(X, A^{-1}G(X), v)$ coincides with the right-hand side of \eqref{eq:4dim-dyn-sys-TFE-r-negative}, thus Item~2 is proved. Differentiability of $H$, $G$ and $F$   implies Item~4.

 Consider a point $X = (a, b, 0, 0) \in A^* \cup S^*$, then point $(a, b, 0, 0, -a/2, 0)$ is a fixed point of \eqref{eq:6dim-dyn-sys-Peaceman-ver2} for any $v, l$. Note that for small $l>0$ the point $(v, a, b, 0, 0, -a/2, 0)$ lies in a small neighborhood of $K_l$. Since $K_l$ is invariant and normally hyperbolic, any trajectory staying in a small neighborhood of it actually belongs to it. Hence, $(v, a, b, 0, 0, -a/2, 0) \in K_l$, which concludes the proof of Item~3.
\end{proof}

\subsubsection*{Step 2: proof of the intersection of stable and unstable manifolds for perturbed system} The constructions in this subsection are similar to \cite[Theorem~3.1]{SZMOLYAN1991}.
Consider a small disc $\mathfrak{D}\in\mathbb{R}^4$ transversal to the vector field $F_{v_1^*,0}$ at point $C \in W^u_{\mathrm{comp}}(S^*) \cap W^s_{\mathrm{comp}}(A)$. Denote by $\phi_\xi^{v^*,l}(X)$ an image of the point $X\in\mathbb{R}^4$ under the flow of the system~\eqref{eq:4-dim-F-ve}  at $\xi\in\mathbb{R}$, see Lemma~\ref{lem:He}. 

\begin{lemma}
\label{lem:inte}
    For small enough $l>0$ and $v$ close enough to $v_1^*$ there exists invariant manifolds $W^u_{loc, v, l}(S^*)$, $W^u_{\mathrm{comp}, v,l}(S^*)$, $W^s_{loc, v, l}(A)$, $W^s_{\mathrm{comp}, v, l}(A)$ of the flow $F_{v, l}$ defined similarly to \eqref{eq:Wuloc}, \eqref{eq:Wucomp}, \eqref{eq:Wsloc}, \eqref{eq:Wscomp}. There exists point of intersection   
    \begin{equation*}
    C(v, l) = W^u_{\mathrm{comp}, v, l}(S^*) \cap W^s_{\mathrm{comp}, v, l}(A) \cap \mathfrak{D},
    \end{equation*}
    which depends continuously on $v, l$ for $(v,l)\in(v_1^*-\delta,v_1^*+\delta)\times[0,l_0)$ for some $\delta, l_0>0$.
\end{lemma}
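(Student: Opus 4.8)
The plan is to transfer the transversal intersection obtained for the unperturbed flow $F_{v_1^*,0}$ in Section~\ref{subsec:transverse-TFE} to the perturbed flow $F_{v,l}$, using that by Lemma~\ref{lem:He}, Item~4, the map $F_{v,l}$ is a $C^1_{loc}$-small perturbation of $F_{v_1^*,0}$ and that, by Item~2, the two coincide at $l=0$. First I would construct the local invariant manifolds for the perturbed flow. By Lemma~\ref{lem:He}, Item~3, the sets $A^*$ and $S^*$ consist entirely of fixed points of $F_{v,l}$ for all small $l$ and $v$ near $v_1^*$. Linearizing $F_{v,l}$ at $A\in A^*$ and at points of $S^*$ and invoking the $C^1$-continuity in $(v,l)$, the hyperbolic splittings computed in Section~\ref{subsec:transverse-TFE} (stable/center at $A$; stable/center/unstable along $S^*$) persist with eigenvalues and eigenspaces close to the unperturbed ones. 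Exactly as in that section, \cite[Theorem~4.1]{1970-HPS} then yields a local stable manifold $W^s_{loc,v,l}(A)$ of dimension $2$ and a local center-unstable manifold $W^u_{loc,v,l}(S^*)$ of dimension $3$, tangent to the corresponding perturbed eigenspaces, and both depending continuously (indeed $C^1$) on $(v,l)$ since the flow does.

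Next I would flow these local manifolds for the finite $\xi$-intervals fixed in \eqref{eq:Wscomp} and \eqref{eq:Wucomp} to obtain the compact pieces $W^s_{\mathrm{comp},v,l}(A)$ and $W^u_{\mathrm{comp},v,l}(S^*)$; because $\phi_\xi^{v,l}$ depends continuously on $(v,l)$ uniformly on compact $\xi$-intervals, these compact manifolds also vary continuously with $(v,l)$. At $(v,l)=(v_1^*,0)$ they meet transversely at $C$ by the transversality Lemma of Section~\ref{subsec:transverse-TFE}, and since $\dim W^s_{\mathrm{comp}}+\dim W^u_{\mathrm{comp}}=2+3=5>4$ their intersection is locally a one-dimensional curve through $C$ (namely the orbit $\Gamma$). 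Transversality is an open condition and the perturbed manifolds are $C^1$-close to the unperturbed ones, so for $(v,l)$ in a neighborhood of $(v_1^*,0)$ the manifolds $W^s_{\mathrm{comp},v,l}(A)$ and $W^u_{\mathrm{comp},v,l}(S^*)$ still intersect transversely along a curve near $\Gamma$. Intersecting this curve with the disc $\mathfrak{D}$, which is transversal to the vector field (hence to the curve) at $C$, cuts out a single point $C(v,l)$; continuous dependence of $C(v,l)$ on $(v,l)$ then follows from the implicit function theorem applied to the incidence-plus-transversality conditions, equivalently from the standard fact that the transversal intersection point of continuously varying submanifolds varies continuously. This is modeled on the construction in \cite[Theorem~3.1]{SZMOLYAN1991}.

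The main obstacle, and the reason the earlier sections were arranged as they were, is the presence of center directions at $A$ and along $S^*$: the relevant fixed points are not hyperbolic, so the classical stable-manifold theorem does not apply to an isolated heteroclinic connection, and the bare orbit alone cannot be expected to be robust. The resolution, already engineered in Section~\ref{subsec:transverse-TFE}, is to enlarge the unstable object to the full center-unstable manifold $W^u_{loc}(S^*)$ of the normally hyperbolic fixed-point set $S^*$; it is this normal hyperbolicity that makes \cite{1970-HPS} applicable and guarantees persistence under the $C^1$-perturbation. The key quantitative ingredient is the transversality count $T_A\{s+2r=0\}+T_A W^s_{loc}(A)=\mathbb{R}^4$ established there, together with the inclusion $W^u_{loc}(S^*)\subset\{s+2r=0\}$. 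The technical heart of the argument is therefore to verify that the perturbed manifold $W^u_{loc,v,l}(S^*)$ retains dimension $3$ and an approximate tangency close enough to $\{s+2r=0\}$ that this transversality count survives, uniformly for $(v,l)\in(v_1^*-\delta,v_1^*+\delta)\times[0,l_0)$.
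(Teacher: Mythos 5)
Your proposal is correct and follows essentially the same route as the paper: normal hyperbolicity of the fixed-point sets $A^*$ and $S^*$, persistence and $C^1$-continuous dependence of their local stable/unstable manifolds via \cite[Theorem~4.1]{1970-HPS} combined with Lemma~\ref{lem:He}, Item~3 (to identify the perturbed invariant set with $S^*$ itself), finite-time flowing to the compact pieces, openness of transversality with the dimension count $2+3=5>4$, and finally cutting the resulting one-dimensional intersection curve with the transversal disc $\mathfrak{D}$. The only ingredient the paper adds that you do not mention is a technical remark explaining how to apply \cite{1970-HPS} when $A^*$ and $S^*$ are topological discs rather than closed manifolds (by extending the vector field and embedding them in compact normally hyperbolic manifolds), but this does not affect the substance of the argument.
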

\begin{proof}
    The lemma is a straightforward consequence of the persistence of stable and unstable manifolds for normally hyperbolic sets \cite[Theorem 4.1]{1970-HPS}, Lemma \ref{lem:He} and transversality of the manifolds $W^u_{\mathrm{comp}}(S^*)$ and $W^s_{\mathrm{comp}}(A)$. Below we provide a more detailed exposition.
  
    Note that the set $S^*$ is normally hyperbolic for the flow $F_{v_1^*, 0}$. According to \cite[Theorem 4.1(f)]{1970-HPS} for the flow $F_{v, l}$ with $(v, l)$ close to $(v_1^*, 0)$ there exists a unique locally invariant set $S^*_{v,l}$ in a neighborhood of $S^*$  and its local unstable manifold $W^u_{loc, v,l}(S^*_{v,l})$ continuously depending on $v, l$ in the $C^1$-topology. Due to Lemma~\ref{lem:He}, Item~3, the set $S^*$ is invariant for $F_{v,l}$, hence  $ S^* = S^*_{v,l}$ and $W^u_{loc, v,l}(S^*_{v,l}) = W^u_{loc, v, l}(S^*)$. Note that $W^u_{loc, v_1^*, 0}(S^*) = W^u_{loc}(S^*)$.

    Similarly, the set $A^*$ is normally hyperbolic (attracting) for $F_{v_1^*, 0}$ and according to \cite[Theorem 4.1(f)]{1970-HPS} and Lemma \ref{lem:He}, Item 3, the stable manifold of $W^s_{loc, v, l}(x)$ depends continuously on $v, l$ in the $C^1$-topology for any $x \in A^*$. In particular, $W^s_{loc, v, l}(A)$  depends continuously on $v, l$ in the $C^1$-topology and  $ W^s_{loc, v_1^*, 0}(A) = W^s_{loc}(A)$.

    \begin{remark}
        Note that \cite[Theorem 4.1]{1970-HPS} is applicable to normally hyperbolic manifolds, while sets $S^*$, $A^*$ are topological discs. In order to be able to apply \cite[Theorem 4.1]{1970-HPS}, we suggest the following construction. Consider $U_{S^*}$ a neighborhood of $S^*$ (same as in definition of $W^u_{loc}(S^*)$) and vector fields $F_{v_1^*, 0}^*$, $\tilde{F}_{v,l}$ satisfying (we omit the construction)
        \begin{itemize}
            \item $F_{v_1^*, 0}^*(x) = F_{v_1^*, 0}(x)$, $\tilde{F}_{v,l}(x) = F_{v,l}(x)$ for $x \in U_{S^*}$,
            \item There are compact manifolds $N$, $N_{v,l}$ normally hyperbolic for $\tilde{F}_{v_1^*, 0}$ and $\tilde{F}_{v,l}$ respectively and $S^* \subset N$, $S^* \subset N_{v,l}$.
            \item $\tilde{F}_{v,l}$ is a small perturbation of $F_{v_1^*, 0}^*$.
        \end{itemize}
        Note that $W^u_{loc}(S^*)$ and $W^u_{loc, v, l}(S^*)$ would be unstable manifolds of $S^*$ for vector fields $F_{v_1^*, 0}^*$, $\tilde{F}_{v,l}$ as well, and \cite[Theorem 4.1]{1970-HPS} is applicable to the family of vector fields $\tilde{F}_{v,l}$.

        Similar construction is possible for the set $A^*$.

        See also \cite[Theorem 4.5.1, Theorem 5.6.1, Section 6.3]{Wiggins}, \cite{Sakamoto1990, JK1994, CKT1998, liu2000exchange}.
    \end{remark}

    Since $F_{v,l}$ depends continuously on $v,l$ in $C^1_{loc}$-topology, we see that the disk $\mathfrak{D}$ is transverse to the vector field $F_{v,l}$ for $l>0$ small enough and $v$ close to $v_1^*$. Similarly to~\eqref{eq:Wscomp} and~\eqref{eq:Wucomp}, we define:
    \begin{align*}
        W^s_{\mathrm{comp}, v, l}(A):&=\bigl\{\phi^{v,l}_{\xi} (x)\in\mathbb{R}^4\,:\, \text{for all }x\in W^s_{loc, v, l}(A), \,\xi\in[-\xi_0-1,0]\bigr\},
        \\
        W^u_{\mathrm{comp},v,l}(S^*) :&= \bigl\{\phi^{v,l}_{\xi}(x)\in\mathbb{R}^4\,:\,\text{for all } x\in W^u_{loc, v, l}(S^*), \,\xi\in[0,\xi_1+1]\bigr\}.  
    \end{align*}
Continuous dependence of $W^u_{loc, v, l}(S^*)$  on $v, l$  in the  $C^1$-topology implies continuous dependence of $W^u_{\mathrm{comp}, v, l}(S^*)$  on $v,l$ in the  $C^1$-topology. Similarly $W^s_{\mathrm{comp}, v, l}(A)$ depends continuously on $v,l$ in the  $C^1$-topology.

Transversality of intersection of the manifolds $W^s_{\mathrm{comp}}(A) = W^s_{\mathrm{comp}, v_1^*, 0}(A)$ and $W^u_{\mathrm{comp}}(S^*) = W^u_{\mathrm{comp}, v_1^*, 0}(S^*)$ implies that the set 
\begin{align*}
    \gamma_{v,l} := W^u_{\mathrm{comp}, v, l}(S^*) \cap W^s_{\mathrm{comp}, v, l}(A) \ne \emptyset,
\end{align*}
and depends continuously on $v,l$ in $C^1$-topology. Note that $\dim \gamma_{v,l} = 1$ and $\gamma_{v,l}$ is a trajectory of~\eqref{eq:4-dim-F-ve}. Since $\mathfrak{D}$ is transverse to the vector field $F_{v,l}$ and $\gamma_{v_1^*,0} \cap \mathfrak{D} = C$ then $C(v, l) := \gamma_{v,l} \cap \mathfrak{D}$ depends  continuously on $v, l$.
\end{proof}

\subsubsection*{Step 3: Continuous dependence of intersection point}
Let $D(v,l) := \phi_{-\xi_1-1}^{v,l}(C(v,l)) \in W^u_{loc, v, l}(S^*)$. Due to Lemma~\ref{lem:inte}, the point $D(v,l)$ depends continuously on $v$ and $l$. For small enough $l_0>0$ denote 
\begin{align*}
\mathds{W} :   = \{(v,l, x): \;  |v-v_1^*|< l_0,\;  l \in [0, l_0), \; x \in   W^u_{loc, v, l}(S^*) \}.
\end{align*}
Consider the map 
$$
P:\mathds{W} \to S^*, \quad\text{defined in such a way that } x \in W^u_{loc, v, l}(P(v,l, x)).
$$
Continuous dependence of unstable lamination \cite[Theorem 4.1(f)]{1970-HPS} implies that the map $P$ is uniquely defined and continuous. 
Hence, the point 
$$
B(v,l): = P(v,l, D(v,l))
$$
depends continuously on $v,l$ and 
there exists a heteroclinic trajectory $X(\xi)$ of~\eqref{eq:4-dim-F-ve} that connects $B(v,l) \in S^*$ and $A$. Hence, there exists a unique trajectory of~\eqref{eq:slow-system}, $(v, X(\xi), Y(\xi)) \in K_{l}$. Then $(X(\xi), Y(\xi))$ is a heteroclinic trajectory of \eqref{eq:6dim-dyn-sys-Peaceman-ver2} with given $(v, l)$. As the point $B(v,l) = (a_0(v, l), b_0(v, l), 0, 0)$ depends continuously on $v,l$, so do the functions $a_0(v,l)$, $b_0(v,l)$.

\subsubsection*{Step 4: Consistency}
First, we prove that the inequality $r_1\leq0$ holds on heteroclinic trajectory $\Gamma_{v, l}$, that connects $B(v,l)$ and $A$. Consider $\widehat\xi>0$ such that
$$
\phi_{-\widehat\xi} \,(C(v_1^*, 0)) \in W^u_{loc}(B), \quad \phi_{\widehat\xi} \,(C(v_1^*, 0)) \in W^s_{loc}(A).
$$
For $l\in[0,l_0)$ for small enough $l_0>0$ we can guarantee the inclusions
$$
\phi_{-\widehat\xi} \,(C(v, l)) \in W^u_{loc, v,l}(B(v, l)), 
\quad 
\phi_{\widehat\xi} \,(C(v, l)) \in W^s_{loc,v,l}(A).
$$
We prove $r_1\leq0$  at $\phi_{\xi} \,(C(v, l))$ independently for $\xi<-\widehat\xi$,\, $\xi \in [-\widehat\xi, \widehat\xi]$ and $\xi>\widehat\xi$.
\begin{enumerate}

    \item Consider $\xi< - \widehat\xi$. Note that the tangent vector at point $B$ to the manifold $W^u_{loc}(B)$ is (see \eqref{eq:13.1})
    \begin{equation}\label{eq:Add2.1}
        (-1, 1, -v_1^*, 2v_1^*)
    \end{equation}
    with non-zero $r_1$ component and heteroclinic trajectory $\Gamma_{v_1^*, 0}$ satisfies $r_1<0$, see \eqref{eq:parabola-1}. Hence, for small enough $l_0>0$ the tangent vector to $W^u_{loc}(B(v, l))$ at the point $B(v, l)$ is close to \eqref{eq:Add2.1}. Thus, the inequality $r_1< 0$ holds on $\Gamma_{v, l} \cap W^u_{loc}(B(v, l))$.
    
    \item Consider $\xi \in [-\widehat\xi, \widehat\xi]$. Note that interval $\xi \in [-\widehat\xi, \widehat\xi]$ is bounded and for the set $\{\phi_\xi(C(v_1^*, 0)), \; \xi \in [-\widehat\xi, \widehat\xi]\}$ the inequality $r_1< r^c$ holds for some constant $r^c<0$. Additionally decreasing $l_0$, we can conclude that the inequality $r_1< r^c/2 <0$ holds also for the set $\{\phi^{v, l}_\xi(C(v, l)), \; \xi \in [-\widehat\xi, \widehat\xi]\}.$
    
    \item Consider $\xi > \widehat\xi$. 
    Note that $T_A W^s_{loc}(A) \subset \{r_1 = -v_1^*a\}$, see \eqref{eq:EsA}. Hence for sufficiently small $l_0$ all points $(a, b, r_1, s_1) \in W^s_{loc, v, l}(A)$ satisfy inclusion $r_1 \in (-\frac{3}{2}v_1^*a, -\frac{1}{2}v_1^*a)$. So we can write $r_1(\xi) = k(\xi)a(\xi)$, where $k(\xi) \in (-\frac{3}{2}v_1^*, -\frac{1}{2}v_1^*)$, on the heteroclinic trajectory $\Gamma_{v,l}$. Thus,  $r_1(\xi)$ has a different sign than $a(\xi)$ and
    $$
    \dot{a} = \left(k(\xi) +u_1 +\frac{a}{2}\right)a, \quad \xi \geq \widehat\xi.
    $$
    This implies that $a(\xi)$ has the same sign for all $\xi \geq \widehat\xi$ and $r_1(\xi)$ has an opposite sign. Since $r_1(\widehat\xi) < 0$ then $r_1(\xi) \leq0$ for $\xi>\widehat\xi.$   
\end{enumerate}

We proved that the inequality $r_1\leq0$ holds on the heteroclinic trajectory $\Gamma_{v, l}$. Now we can prove that $q_1 \geq 0$ on  $\Gamma_{v, l}$ (by contradiction). Equations \eqref{eq:6dim-dyn-sys-Peaceman-ver2} imply 
\begin{equation} 
\label{eq:ddotq1}
q_1 = -\frac{1}{2}l r_1 -\frac{1}{4}l^2a\dot{q}_1+\frac{1}{2}l^2\ddot{q}_1.   
\end{equation}
Note that $q_1 \to 0$ as $\xi\to\pm \infty$. If $\inf_\xi q_1(\xi) < 0$, then there exists $\xi_2\in\mathbb{R}$ such that $q_1(\xi_2) = \inf_\xi q_1(\xi) < 0$. Equation~\eqref{eq:ddotq1} does hold for $\xi = \xi_2$. Since left hand side of~\eqref{eq:ddotq1} is negative at $\xi = \xi_2$, and  the right hand side of~\eqref{eq:ddotq1} is non-negative
$$
-\frac{1}{2}l r_1(\xi_2) \geq 0, \quad \dot{q}_1(\xi_2) = 0, \quad \ddot{q}_1(\xi_2) \geq0,
$$
we get a contradiction. Hence, $\inf_\xi q_1(\xi) \geq 0$, and Item 1b of Theorem~\ref{thm:Peaceman-reformulated} is proved.
\end{proof}

\subsection{Existence of propagating terrace}
\label{subsec:prop-terrace-IPM}
In this section we prove Theorem~\ref{thm:main-Peaceman}. By Theorem~\ref{thm:TW-IPM} we obtain that there exists sufficiently small $l_0,\delta >0$ such that:
\begin{itemize}
    \item  for all $v\in(v_1^*-\delta,v_1^*+\delta)$ and  $l\in(0,l_0)$ there exists a traveling wave solution $(c_{1},c_2,u_1,u_2,q)(y-vt)$ that connects the states 
\begin{align*}
(-1,-1,0,0,0) \xrightarrow{TW} \left(c_1^*,c_2^*,\frac{c_2^*-c_1^*}2,\frac{c_1^*-c_2^*}2,0\right).
\end{align*}
    
Recall that $\mathcal{H}_{-1}(l):=\{(c_1^*(v,l),c_2^*(v,l)): v\in(v_1^*-\delta,v_1^*+\delta)\}$, see Fig.~\ref{fig:HugoniotIPM}. 
 \item  
 for all $v\in(v_2^*-\delta,v_2^*+\delta)$ and $l\in(0,l_0)$ there exists a traveling wave solution 
$(c_{1},c_2,u_1,u_2,q)(y-vt)$ that connects the states
\begin{align*}
 \left(c_1^{**},c_2^{**},\frac{c_2^{**}-c_1^{**}}2,\frac{c_1^{**}-c_2^{**}}2,0\right)\xrightarrow{TW} (1,1,0,0,0),
\end{align*}
Recall that $\mathcal{H}_{1}(l):=\{(c_1^{**}(v,l),c_2^{**}(v,l)): v\in(v_2^*-\delta,v_2^*+\delta)\}$, see Fig.~\ref{fig:HugoniotIPM}.
\end{itemize}

Note that $\mathcal{H}_{-1}(0):
    =\bigl\{\lim\limits_{l\to0}(c_1^*(v,l),c_2^*(v,l)): v\in(v_1^*-\delta,v_1^*+\delta)\bigr\}$
coincides with the subset of $\mathcal{H}_{-1}$ under the restriction $v\in (v_1^*-\delta,v_1^*+\delta)$; also the set $\mathcal{H}_{1}(0):=\bigl\{\lim\limits_{l\to0}(c_1^{**}(v,l),c_2^{**}(v,l)): v\in(v_2^*-\delta,v_2^*+\delta)\bigr\}$ coincides with the subset of $\mathcal{H}_{1}$ under the restriction $v\in (v_2^*-\delta,v_2^*+\delta)$. From Section~\ref{subsec:heteroclinic-orbits} we know that the point $(\frac12,-\frac12)$ lies in the intersection of $\mathcal{H}_1(0)$ and $\mathcal{H}_{-1}(0)$. For every sufficiently small  $l\in[0,l_0)$ the set $\mathcal{H}_1(l)$ (similarly, $\mathcal{H}_{-1}(l)$) is a curve and is close to $\mathcal{H}(0)$ and $\mathcal{H}_{-1}(0)$ near their intersection point.

Let us give an easy topological argument that 
shows that the curves $\mathcal{H}_{1}(l)$ and $\mathcal{H}_{-1}(l)$ have at least one point of intersection. Indeed, consider a rectangle $R:=\{(c_1,c_2): |c_1-1/2|<\delta_1, |c_2+1/2|<\delta_2\}$ for small enough $\delta_{1,2}>0$ such that the curve $\mathcal{H}_1(0)$ intersects $R$ in the interior of the top and bottom sides, and the curve $\mathcal{H}_{-1}(0)$ intersects $R$ in the interior of the left and right sides, see Fig.~\ref{fig:HugoniotIPM}. Taking $l>0$ small enough we have that the curve $\mathcal{H}_1(l)$ is close to $\mathcal{H}_1(0)$ and also intersects $R$ in top and bottom sides. Similarly, the curve $\mathcal{H}_{-1}(l)$ is close to $\mathcal{H}_{-1}(0)$ and intersects $R$ in left and right sides. Thus, $\mathcal{H}_1(l)$ and $\mathcal{H}_{-1}(l)$ intersect.

Hence there exist $v_1^*(l)$ and $v_2^*(l)$ such that the intersection $\mathcal{H}_1(l)\cap \mathcal{H}_{-1}(l)$ contains the point $(c_1^*(v_2^*(l),l),c_2^*(v_2^*(l),l))$ (or the same point $(c_1^{**}(v_1^*(l),l),c_2^{**}(v_1^*(l),l))$).  By continuity argument, we obtain
\begin{align*}
        &\lim\limits_{l\to 0} c_1^*(v_2^*(l),l)=-1/2,\quad \lim\limits_{l\to 0} c_2^*(v_2^*(l),l)=1/2, \quad \lim\limits_{l\to 0} v_1^*(l)=-\lim\limits_{l\to 0} v_2^*(l)=1/4,
\end{align*}
 thus Theorem~\ref{thm:main-Peaceman} is proven.

\section{Discussions and generalizations}
\label{sec:discussions}
There are several directions for further investigation:

\mbox{\textbf{1.}\;} \textbf{Stability properties of the propagating terraces. }Note that Theorem~\ref{thm:main-Peaceman} establishes the existence of the propagating terrace consisting of two traveling waves for small values of parameter~$l$. Nevertheless, numerical simulations show that for a typical initial data, the solution of \eqref{eq:2tubes-diffusive-system-1}--\eqref{eq:IPM-2-tubes-w} converges to a propagating terrace for any value of $l >0$. This observation suggests that a statement of Theorem~\ref{thm:main-Peaceman} holds for any $l>0$, moreover, this propagating terrace is stable (in some suitable sense that needs to be defined more precisely). At the same time, for system \eqref{eq:2tubes-diffusive-system-1}--\eqref{eq:IPM-2-tubes-w} it is possible to construct other propagating terraces consisting of $k$ traveling waves for any $k> 2$, but we expect them to be unstable to small perturbations. For some references on stability of propagating terraces see Remark \ref{rem:pt-names}.

\mbox{\textbf{2.} \;} \textbf{$n$-tubes model for gravitational fingering. }It is possible to construct a model on similar principles for $n>2$ tubes. In that case for typical initial data, the solution converges to a propagating terrace consisting of several traveling waves. However, for $n>2$ limiting solution profile is not unique and often consists of $k \geq n$ traveling waves. 
For short description of the construction and its properties see~\cite{Oberwolfach-2024}.

\mbox{\textbf{3.} \;}\textbf{Two- and $n$-tubes model for viscous fingering. }
There is a natural analogue of a two-tubes model for the system \eqref{eq:IPM-1}, \eqref{eq:IPM-2}, \eqref{eq:Darcy-viscosity} that describes viscous fingering phenomenon. Numerical simulations show that for typical initial data solution of a two-tubes model converges to a propagating terrace of two traveling waves. This observation suggests that analogue of Theorem \ref{thm:main-Peaceman} for the case of viscous fingers should be correct. Most of the steps of the proof of Theorem \ref{thm:main-Peaceman} could be repeated in that case. However, we cannot prove the existence of analogues of invariant hyperplanes $I_1,  I_4$. At the same time numerics shows that corresponding dynamical system has invariant sets which play a similar role. Detailed analysis of 2- and $n$-tube model for viscous liquids displacement is a subject for future research.

\section*{Acknowledgement}
We thank Aleksandr Enin for fruitful discussions. Research of ST and YP is supported by Projeto Paz and Coordenacao de Aperfeicoamento de Pessoal de Nivel Superior - Brasil (CAPES) - 
23038.015548/2016-06. ST is additionally supported by FAPERJ PDS 2021, process code E-26/202.311/2021 (261921),  CNPq through the grant  404123/2023-6  and FAPERJ APQ1 E-26/210.702/2024 (295291). 
YP was additionally supported by CNPq through the grant 406460/2023-0 and FAPERJ APQ1 E-26/210.700/2024 (295287). Part of the work was written while YP was a postdoctoral fellow at IMPA. ST and YP thank IMPA and PUC-Rio for creating excellent working conditions.  

\appendix
\section{Derivation of the two-tubes IPM model}
\label{ap:FV}
In this Section we give a schematic derivation of the two-tubes IPM model~\eqref{eq:2tubes-diffusive-system-1}--\eqref{eq:IPM-2-tubes-w} using the finite-volume scheme (see~\cite{Finite-volume-2000}) under the assumption that all the unknown functions are sufficiently smooth. Our plan is as follows:
\begin{enumerate}
    \item[(1)] Write down a finite volume scheme for the 2D inviscid IPM model~\eqref{eq:IPM-1}--\eqref{eq:IPM-3} in a strip $[0,2l]\times\mathbb{R}$ on rectangular mesh with two cells in the horizontal direction (assuming $\nu=0$). 

    \item[(2)] Consider the cells of sizes $l\times h$, and take a formal limit as $h\to0$, which results in two 1D equations~\eqref{eq:2tubes-diffusive-system-1}--\eqref{eq:2tubes-diffusive-system-2} for concentrations coupled with each other due to equations on velocity and pressure~\eqref{eq:peretok}--\eqref{eq:IPM-2-tubes-w}. 

    \item[(3)] Add diffusion terms $c_{yy}$ to equations~\eqref{eq:2tubes-diffusive-system-1}--\eqref{eq:2tubes-diffusive-system-2}. So formally the two-tubes IPM model contains diffusion only in vertical direction (direction of gravity).
\end{enumerate}

\begin{minipage}{0.52\textwidth}
 Let us consider discretization grid $2 \times \mathbb{Z}$ with cells of sizes $l\times h$
\begin{align*}
    T_{i,j}=[(i-1)l,il]\times[(j-1/2)h,(j+1/2)h].
\end{align*}
Here $(i,j)\in\{1,2\}\times\mathbb{Z}$.
Let $C_{i,j}^n$, $P_{i,j}^n$, $i\in\{1,2\}$, $j\in\mathbb{Z}$, $n\in\mathbb{N}$, be the discrete unknowns associated to the cell $T_{i,j}$ at time step $t_n\geq0$. 

Initially, at $t_0=0$, we define:
\begin{align*}
    C_{i,j}^0=\frac{1}{|T_{i,j}|}\int\limits_{T_{i,j}} c(0,x,y)\, dxdy.
\end{align*}

\end{minipage}
\hfill
\begin{minipage}{0.37\textwidth}
\includegraphics[width=0.9\textwidth]{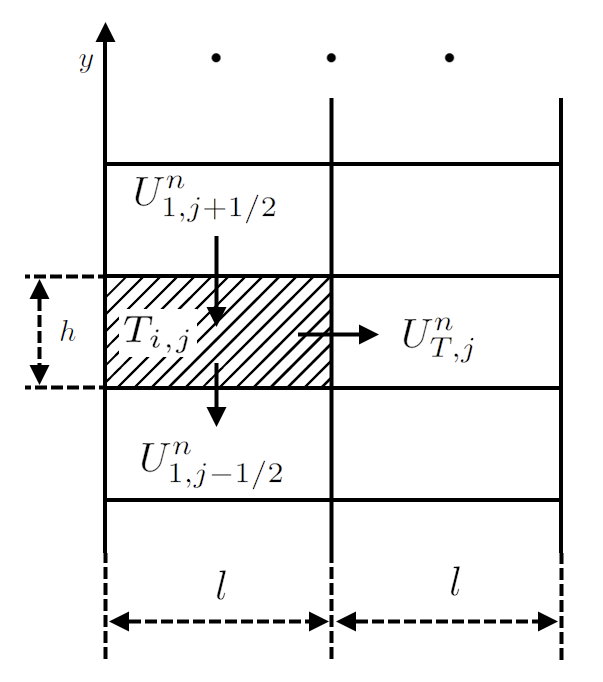}
 \captionof{figure}{Finite-volume scheme with two columns of cells.}
    \label{fig:FV-scheme}
\end{minipage}

At the boundaries $x=0$ and $x=2l$ we assume no-flux boundary condition, and define the fluxes through the other edges of each cell using an upwind scheme.

 Integrating the equation~\eqref{eq:IPM-1} over the cell, we obtain:
\begin{align*}
    \partial_t \Bigl(\int\limits_{T_{i,j}} c\,dxdy \Bigr)- \int\limits_{\partial T_{i,j}} u c \cdot n \,dl=0.
\end{align*}
Following the finite volume scheme~\cite{Finite-volume-2000}, we obtain the discrete equation  ($\Delta t=t_{n+1}-t_n$, $n\in\mathbb{N}$)
\begin{align}
\label{eq:FV-transport}
    \frac{C_{i,j}^{n+1}-C_{i,j}^{n}}{\Delta t} &+\frac{U_{i,j-1/2}^nC_{i,k_{i, j-1/2}^n}^n - U_{i,j+1/2}^nC_{i,k_{i, j+1/2}^n}^n}{h}+ \frac{ U_{T,j}^n C_{m_j^n,j}^n}{l}=0,
\end{align}
where $U_{i,j-1/2}^n$ and $U_{i,j+1/2}^n$, $(i,j)\in\{1,2\}\times\mathbb{Z}$, are the normal velocities defined on the upper and lower edges of the cell $T_{i,j}$ at time $t_n$; $U_{T,j}^n$ is the normal velocity defined on the edge between the cells $T_{1,j}$ and $T_{2,j}$  at time $t_n$. The velocities $U_{i,j+1/2}^n$ and $U_{T,j}^n$ are defined below in~\eqref{eq:FV-Div}--\eqref{eq:FV-Darcy}. We use the following notation encapsulating upwind scheme:
\begin{align*}
    k_{i, j+1/2}^n & = \begin{cases}
        j, & \mbox{if $U_{i,j+1/2}^n \geq 0$}
        \\
        j+1,& \mbox{if $U_{i,j+1/2}^n \leq 0$};
    \end{cases}
    &&
    m_{j}^n  =
    \begin{cases}
        1, &\mbox{if $U_{T,j}^n \geq 0$},
        \\
        2, &\mbox{if $U_{T,j}^n \leq 0$}.
    \end{cases}
\end{align*}

The incompressibility condition~\eqref{eq:IPM-2} can be written as:
\begin{align}\label{eq:FV-Div}
    \frac{U_{1,j+1/2}^n-U_{1,j-1/2}^n}{h}+\frac{U_{T,j}^n}{l}=0, \qquad \frac{U_{2,j+1/2}^n-U_{2,j-1/2}^n}{h}-\frac{U_{T,j}^n}{l}=0.
\end{align}

The Darcy's law~\eqref{eq:IPM-3} can be written as:
\begin{align*}
  \int\limits_{\Gamma} (u+\nabla p+(0,c))\cdot \vec{n}\, dl=0,
\end{align*}
where $\Gamma$ is one of the edges of the cell $T_{i,j}$ with $\vec{n}$ being its normal vector.   
In a discrete setting, this is equivalent to the following system of equations written for each direction:
\begin{align}\label{eq:FV-Darcy}
    U_{i,j+1/2}^n=-\frac{P_{i,j+1}^n-P_{i,j}^n}{h}-C_{i, j+1/2}^n, \qquad U_{T,j}^n=-\frac{P_{2,j}^n-P_{1,j}^n}{l},
\end{align}
where $C_{i, j+1/2}^n$ is the value of concentration on the edge between $T_{i, j}$, $T_{i, j+1}$. Since for finite volume scheme value $C_{i, j}$ are defined for $j \in \mathbb{Z}$, we take 
$
C_{i, j+1/2}^n = \frac{C_{i,j}^n+C_{i,j+1}^n}2.
$

\begin{remark}
    In finite volume methods, the normal velocity is approximated  by pressures at the two points which are located on the orthogonal bisector on each side of the edge (see~\cite{Finite-volume-2000}), which can be challenging for non-orthogonal grids or tensor permeabilities. For our orthogonal grid,  normal velocities are  simply directional components of the velocity field and thus, we only need to use directional derivatives of the pressure field.
\end{remark}

Consider sufficiently smooth functions $c_{1,2}(t,y)$, $p_{1,2}(t,y)$ and $u_{1,2,T}(t,y)$ with $(t,y)\in\mathbb{R}_+\times\mathbb{R}$,  satisfying 
\begin{align*}
    c_{i}(t_n,jh)=C_{i,j}^n;\qquad p_{i}(t_n,jh)=P_{i,j}^n, \quad j\in\mathbb{Z},\,n\in\mathbb{N}, \,i\in\{1,2\},
    \\
    u_{i}(t_n,(j+1/2)h)=U_{i,j+1/2}^n, \quad j\in\mathbb{Z},\,n\in\mathbb{N}, \,i\in\{1,2,T\},
\end{align*}
and take in equations \eqref{eq:FV-transport}--\eqref{eq:FV-Darcy} a formal limit  as $\Delta t\to 0$ and $h \to 0$. From equations \eqref{eq:FV-transport} we obtain the equations~\eqref{eq:2tubes-diffusive-system-1}--\eqref{eq:peretok} with zero diffusion term; from equation \eqref{eq:FV-Div} we obtain semi-discrete incompressibility condition~\eqref{eq:IPM-2-tubes-w}; and from  equation \eqref{eq:FV-Darcy} we obtain the semi-discrete Darcy's law~\eqref{eq:gravitational-Darcy-2tubes-a}--\eqref{eq:gravitational-Darcy-2tubes-b}.

\section{Connection between TFE and IPM models} 
\label{ap:TFE-IPM}
In this Section we describe the connection between the Incompressible Porous Medium (IPM) equations~\eqref{eq:IPM-1}--\eqref{eq:IPM-3} and the Transverse Flow Equilibrium (TFE) equations~\eqref{eq:IPM-1}--\eqref{eq:IPM-2},~\eqref{eq:TFE-2D-u}. We want to show that TFE model can be seen as a first-order approximation to the IPM model in the regime when the flow ``equilibrates much faster'' in the transversal direction than in the main direction of the flow (e.g. an asymptotically thin domain). Notice that both models exhibit a fingering instability, but up to our knowledge, there is no rigorously proven connection between TFE and IPM. Nevertheless, for the viscous fingering the connection was formally derived in~\cite{yortsos1995-TFE}; see also~\cite[Section 3.3]{armiti-rohde-2019} for numerical comparison of TFE and IPM.

Consider the domain $(x,y)\in(0,L)\times(0,H)$, and a system~\eqref{eq:IPM-1}--\eqref{eq:IPM-2} under the assumption $\nu=0$, supplied with a more general form of the Darcy's law (here $k^x, k^y>0$ are permeabilities in $x$ and $y$-directions):
\begin{align}
\label{eq:IPM-3c}
        u=(u^x, u^y) =-
    \left(\begin{matrix}
k^x & 0 \\
0 & k^y 
\end{matrix}\right)\nabla p -(0,c).
\end{align}
First, we make the following change of variables:
\begin{align*}
    \tilde{x}=\frac{x}{L}\in[0,1], 
    \qquad 
    \tilde{y}=\frac{y}{H}\in[0,1], 
    \qquad 
    \tilde{t}=\frac{t}{H},
    \qquad 
    \tilde{p}=\frac{k^y}{H}\cdot p,
    \qquad 
    \tilde{u}^x=\frac{H}{L} \cdot u^x,
\end{align*}
 and the IPM equations~\eqref{eq:IPM-1}--\eqref{eq:IPM-2},~\eqref{eq:IPM-3c} become (here $\gamma:=\frac{L}{H}\sqrt{\frac{k^y}{k^x}}$; we omit tildas):
\begin{align}
\label{eq:IPM-gamma-1}
    \partial_t c+\partial_x(u^xc)+\partial_y(u^yc)&=0,\quad
    \\
\label{eq:IPM-gamma-2}
    \partial_x u^x+\partial_y u^y&=0,
    \\
\label{eq:IPM-gamma-3}
    u^x&= -\frac{1}{\gamma^2}\partial_x p,
    \\
\label{eq:IPM-gamma-4}
    \quad u^y&= -\partial_y p-c.
\end{align}
This form of equations suggests to write a formal asymptotic expansion in $\gamma^2$ as $\gamma\to0$:
\begin{align*}      g(t,x,y)&=g^0(t,x,y)+\gamma^2\cdot g^1(t,x,y)+\ldots
\end{align*}
for $g=\{c,p,u^x,u^y\}$. 
 Using~\eqref{eq:IPM-gamma-3}, we obtain $\partial_x p^0=0$, thus up to first approximation $p$ doesn't depend on $x$ variable. Integrating~\eqref{eq:IPM-gamma-4} and using no-flow conditions on all boundaries, we get
\begin{align*}
    0=\int\limits_{0}^1u^{y,0}(t,x,y)\,dx=-\partial_yp^{0}(t,y) -\int\limits_0^1c^0(t,x,y)\,dx.
\end{align*}
Putting this relation into~\eqref{eq:IPM-gamma-4},  we immediately get~\eqref{eq:TFE-2D-u} for velocity $u^{0}=(u^{x,0}, u^{y,0})$. Note that first-order approximation of the equations~\eqref{eq:IPM-gamma-1}--\eqref{eq:IPM-gamma-2} give exactly the equations~\eqref{eq:IPM-1}--\eqref{eq:IPM-2} for $c^0$ and $u^0$ (under the assumption $\nu=0$). This finishes the derivation of the TFE equations~\eqref{eq:IPM-1}--\eqref{eq:IPM-2},~\eqref{eq:TFE-2D-u}.

    The parameter $\gamma=\frac{L}{H}\sqrt{\frac{k^y}{k^x}}$, responsible for the connection between IPM and TFE models, was originally introduced by Yortsos in~\cite{yortsos1995-TFE}. It allows for the following two interpretations:
    \begin{itemize}
        \item If $k^x=k^y$ and $\gamma=\frac{L}{H}\to0$, then         the transverse length of the domain is much smaller compared to the length in the main direction of the flow (e.g.~\cite{armiti-rohde-2014,armiti-rohde-2019}).
        \item If $L=H$ and $\gamma=\sqrt{\frac{k^y}{k^x}}\to0$, then the permeability in the transverse direction is much higher than the permeability in the main direction of the flow.
    \end{itemize} 
    In our context of the two-tubes models, the parameter $l$ (distance between the tubes) plays the same role as parameter $\gamma$, thus the limit $l\to0$ corresponds to the two-tubes TFE model, as expected.

\section{Slowdown of fingers due to intermediate concentration} 
\label{ap:slowdown}
 In this Section we clarify what we mean by ``slowdown of fingers due to intermediate concentration''. We give two examples: for 2D and two-tubes TFE~models.
    
\subsection{2D case}
\label{subsec:TFE-2D-slowdown}
In~\cite{otto-menon-2005} the authors prove pointwise upper and lower estimates on the speed of the growth of the mixing zone for 2D TFE model~\eqref{eq:IPM-1}--\eqref{eq:IPM-2} and \eqref{eq:TFE-2D-u}. 
    The key idea lies in comparison with a traveling wave solution connecting $-1$ and~$1$ of the following 1D viscous conservation law of Burger's type (for initial data~\eqref{eq-cpm1}):
\begin{align}
\label{eq:comp-Burgers}
    c_t+\left(c-\frac{c^2}{2}\right)_y=c_{yy}.
\end{align}
    By Rankine-Hugoniot condition, the speed of the traveling wave connecting states $-1$ and $1$, is equal to $v^f=1$, which corresponds to the mixing zone growth rate $h(t)\leq2t$ as was mentioned in the Introduction, see estimate~\eqref{eqat}.

    If on the tip of the finger the concentration drops down from 1 to some intermediate concentration $c^*\in(-1,1)$ (see numerical evidence~\cite{2023-intermediate-Tikhomirov} for viscous fingering), then the estimate~\eqref{eqat} can be improved. Indeed, adhering to the proof strategy of~\cite[Theorem~1]{2023-intermediate-Tikhomirov}, we compare the solution with the traveling wave connecting states $c^*$ and $1$ and get the following expression for its speed $v^f(c^*)$ due to Rankine-Hugoniot condition for the (inviscid version of) equation~\eqref{eq:comp-Burgers}:
\begin{align*}
    v^f(c^*)=\frac{\left[c-\frac{c^2}{2}\right]\biggr\rvert_{c^*}^{1}}{c\bigr\rvert_{c^*}^{1}}=\frac{1-c^*}{2}<1, \quad c^*\in(-1,1).
\end{align*}
In particular, if $c^*=-1$ we get $v^f(-1)=1$, which is in accordance with the estimate~\eqref{eqat}. 
So the appearance of the intermediate concentration on the tip of the finger reduces its speed of propagation and gives a slowdown  
(for 2D TFE model).
\subsection{Two-tubes case} 
\label{subsec:TFE-2tubes-slowdown}
One of the most natural guesses for a possible  propagating terrace consisting of two traveling waves for the two-tubes TFE model~\eqref{eq:2tubes-diffusive-system-1}--\eqref{eq:peretok},~\eqref{eq:IPM-2-tubes-w},~\eqref{eq:gravitational-Darcy-TFE-2tubes} is as follows:
\begin{align*}
    (-1,-1) \xrightarrow{TW} (-1,1)\xrightarrow{TW} (1,1).
\end{align*}
It can be interpreted as two counter-propagating fingers with concentrations $1$ and~$-1$.
If such propagating terrace exists, the speed of the  traveling wave connecting $(-1,1)$ and $(1,1)$ can be calculated using the following formula (here we use $u_1=-u_2=(c_2-c_1)/2$, see~\eqref{eq:gravitational-Darcy-TFE-2tubes})
\begin{align}
    \label{eq:RH-TFE}
v^f=\frac{\left[u_1c_1+u_2c_2\right]\bigr\rvert_{(c_1,c_2)=(-1,+1)}^{(c_1,c_2)=(+1,+1)}}
{\left[c_1+c_2\right]\bigr\rvert_{(c_1,c_2)=(-1,+1)}^{(c_1,c_2)=(+1,+1)}}=1.
\end{align}
This expression is an analogue of the Rankine-Hugoniot condition for the sum of  equations~\eqref{eq:2tubes-diffusive-system-1} and~\eqref{eq:2tubes-diffusive-system-2}. Indeed, this sum can be written in a conservative form:
\begin{align*}
\partial_t(c_1+c_2)+\partial_y(c_1u_1+c_2u_2)=\partial_{yy}(c_1+c_2).
\end{align*}
Putting traveling wave ansatz $(c_1,c_2)(t,y)=(c_1,c_2)(\xi)$, $\xi=y-vt$, integrating from $\xi=-\infty$ to $\xi=+\infty$ and assuming $\partial_y(c_{1,2})(\pm\infty)=0$, we obtain~\eqref{eq:RH-TFE} for~$v=v^f$.

Comparing the speed $v^f=1$ from formula~\eqref{eq:RH-TFE} with the speed $v^f=1/4$ obtained in Theorem~\ref{thm:main-TFE}, we observe that the selected propagating terrace for the two-tubes TFE model, indeed, moves slower. We interpret this propagating terrace as two counter-propagating fingers with concentrations $1/2$ and $-1/2$, and thus conclude that we have a slowdown of fingers growth due to intermediate concentration.


\end{document}